
\documentclass[10pt,a4paper]{article}
\usepackage{amssymb,amsfonts,amsthm}
\usepackage{amssymb,amsfonts}
\usepackage{authblk}
\def\N{\mathbb{N}}
\def\R{\mathbb{R}}

\def\Z{\mathbb{Z}}


\def\PP{\mathcal{P}}
\def\QQ{\mathcal{Q}}
\def\CC{\mathcal{C}}
\def\UU{\mathcal{U}}
\def\VV{\mathcal{V}}
\def\mod{$\mbox{mod}$}
\newtheorem{thm}{Theorem}
\newtheorem{lem}{Lemma}

\newtheorem{pro}{Proposition}
\newtheorem{ques}{Question}

\newtheorem{de}{Definition}

\newtheorem{rem}{Remark}

\begin{document}



\title{Entropy rate of higher-dimensional cellular automata}



\author[1]{Fran\c cois Blanchard}
\affil[1]{ Laboratoire d'Analyse et de Math\'ematiques Appliqu\'ees, UMR 8050 (CNRS-U. de Marne-la-Vall\'ee), UMLV, 5 boulevard Descartes, 77454 Marne-la-Vall\'ee Cedex 2, France \thanks{E-mail address: \texttt{francois.blanchard@univ-mlv.fr}} }
\author[2]{Pierre Tisseur }
\affil[2]{ Centro de Matematica, Computa\c{c}\~ao e Cogni\c{c}\~ao, Universidade Federal do ABC, Santo Andr\'e, S\~ao Paulo, Brasil \thanks{E-mail address: \texttt{pierre.tisseur@ufabc.edu.br}}}
 
\date{\empty}

\maketitle
 
\begin{abstract} 
 We introduce the entropy rate of multidimensional cellular automata. This number  is invariant under shift--commuting isomorphisms; as opposed to the entropy of such CA, it is always finite.
The invariance property and the finiteness of the entropy rate result from basic results about the entropy of partitions of multidimensional cellular automata. We prove several results that show  that entropy rate of 2-dimensional automata preserve similar properties of the entropy of  one dimensional cellular automata.
 In particular we establish an inequality
 which involves the entropy rate, the radius of the cellular automaton and the entropy of the d-dimensional shift.
We also  compute the entropy rate of permutative bi--dimensional cellular automata and show that the finite value of the entropy rate
 (like the standard entropy of for one--dimensional CA) depends on the number of permutative sites.
 Finally  we define the topological entropy rate and prove that it is an invariant for topological shift-commuting conjugacy
and  establish some relations between topological and measure--theoretic entropy rates.
\end{abstract}

\section{Introduction}
A cellular automaton  (CA) is a continuous self-map $F$ on the configuration space
$A^{\Z^d}$, commuting with the group of shifts on this space. CA are simple computational devices for computer scientists and they are nice models for physicists. Mathematicians view them as an interesting family of topological and measurable dynamical systems. 

The entropy of a CA map $F$ acting on some full shift $A^{\Z^d}$, in its measure-theoretic as well as its topological versions ($h_\mu(A^{\Z^d},F)$ and $h(A^{\Z^d},F)$ respectively) is an important measure of the local unpredictability of the map. Each of the two entropies is an invariant under the suitable kind of conjugacy. 

The entropy of 1-dimensional CA is always finite. But when $d>1$ this measure is a crude one. Already in the two-dimensional case the entropy of a cellular automaton is often infinite. This is true for whole families of CA, the dynamics of which is especially tractable. For instance
it is shown in \cite{damico} that for the class of additive two-dimensional CA on $\{0,1\}^{\Z^2}$, which may be  seen as a subclass of 
two-dimensional permutative CA, defined  in Section \ref{permu}, the entropy is alway infinite. 
It was conjectured by Shereshevsky that for a two-dimensional CA the entropy could be 0 or infinite.
In \cite{Mey}  Meyerovitch has shown that there exist non-trivial examples of two-dimensional CA with finite positive entropy. 
To finish with the entropy of two-dimensional CA, we can say that it look impossible to establish some inequalities 
between the entropy of the automaton and the entropy of the group of shifts since for this last value we need to 
 divide by some square of the number of iterations (see definitions done by equality \ref{eq0} ).
 
 Here we introduce entropy rate for CA acting on $A^{\Z^2}$. It is not hard to obtain similar results for CA on $A^{\Z^d}$, $d > 2$, with proper changes in the definition of entropy rate.
It is derived from partial values of the entropy of the CA and can be expressed as follows for an $F$-invariant measure $\mu$ which is also invariant for the group of shifts:
$$ER_\mu(A^{\Z^2},F) =   \limsup_{n \to \infty} \frac{1}{n} h_\mu(\mathcal{S}_n,F),$$
where $\mathcal{S}_n$ is the clopen partition of $A^{\Z^2}$ according to the values of the coordinates in the square of side $2n+1$ centred at the origin. It is finite for any CA. It is very deeply grounded in the shift structure of the configuration space; as a consequence it is mostly significant when $\mu$ is also invariant under the group of shifts, and in this case it is an invariant for shift-commuting isomorphisms.
 Note that 
$\limsup_{n \to \infty} \frac{1}{n} h_\mu(\mathcal{S}_n,F)$ defined for all  
$F$-invariant measure $\mu$  is an invariant for continuous and shift-invariant isomorphisms only (see subsection \ref{subsectionERS0}).
The topological entropy rate  
$$ER(A^{\Z^2},F) =   \limsup_{n \to \infty} \frac{1}{n} h(\mathcal{S}_n,F)$$
has similar properties and similar limitations.

One could define the entropy rate of one-dimensional cellular automata: it is equal to their usual entropy, up to some multiplicative constant, and does not bring any further information about the dynamics. On the other hand, the entropy of a CA in higher dimensions is often infinite, whereas its entropy rate is always finite, like the entropy in one dimension, so entropy rate turns out to be more sensitive than entropy when $d\ge 2$. In particular, it makes it possible to obtain inequalities, as shown in Section \ref{major} and \ref{permu}.

Let $A$ be a finite set of cardinality  $\# A$. We denote by  $A^{\Z^d}$, the set of configurations or  maps 
from $\Z^d$ to $A$. In this paper we mainly restrict our study to the case $d=2$. 
We note that $A^{\Z^d}$ endowed with the product
topology of the discrete topologies on the sets $A$ is a compact space. 
Let $\Sigma$ be the group  generated by the the $d$ shifts $\sigma_j$  ($1\le j\le d$).  

Note that it is possible to generalize the Curtis-Hedlund-Lyndon theorem (see \cite{Hedlund}) 
and 
state  that for every cellular automaton $F$ there exists an integer $r$ 
called the radius of the CA and a
block map $f$ from $A^{(2r+1)^d}$ to $A$ such that 
$F\left(x(i_1,\ldots ,i_d)\right)=f(x([i_1-r,i_1+r],\ldots ,[i_d-r,i_d+r])$. 

{\bf Entropy}
The entropy (metrical ($h_\mu (T)$) or topological $h(T)$) is an isomorphism invariant that measures the 
complexity of the dynamical system $(X,\mu ,T)$ or $(X,T)$.
For each one-dimensional cellular automaton $F$ of radius $r$ it is well known that $h_\mu (F)\le h(F)\le 2r\ln (\# A)$.
In the ergodic setting (for the shift or the CA $F$) it was shown (see \cite{ti2000}) that
 $h_\mu (F)\le (\lambda^++\lambda^-) \cdot h_\mu (\sigma )\le 
2r\cdot h_\mu (\sigma )$ where $\sigma$ is the shift on $A^\Z$ and $\lambda^\pm$ are discrete Lyapunov exponents. 
In Proposition \ref{major1d} 
 we show that the last inequality  
$h_\mu (F)\le 2r\cdot h_\mu (\sigma)$ remains true for  shift and $F$-invariant measure $\mu$ for the one-dimensional case.
There exist some strong relations between dynamical properties of the CA like  equicontinuity and the fact  that  the entropy is equal to zero (see \cite{BT2000} and \cite{ti2009}).
Is there exists similar results for the entropy rate of two dimensional CA?
In the class of permutative one-dimensional CA the entropy rate is easy to compute. For instance when  $F$ is a CA of radius $r$ permutative in coordinates $-r$ and $r$ the value of the entropy is $h(F)=2r\times\ln (\# A)$.
 For two dimensional  permutative CA, the entropy $h_\mu (F)=+\infty$.

The Variational Principle (see for instance \cite{Wa}) which states that $h(F)=\sup_{\mu}h_\mu (F)$ implicitly introduces the question of the existence of a set of measures of maximum entropy: may it be empty? May it contain more than one measure? As far as we know those questions are open even when $d = 1$. Note that for the permutative class this set is not empty and contains the uniform measure. 

\medskip  

In this paper we introduce a formal definition of the entropy rate that is derived directly from 
the definition of the entropy. 
A first tentative and incomplete definition of measurable entropy rate was given by the second author in  \cite{ti2005} as a draft; 
a little later in \cite{lak}  Lakshtanov and Langvagen introduced some similar notions for the topological case. 
None of those two definitions allow to prove invariance under some class of isomorphisms.

\medskip

{\bf New definition and results}

In this paper we introduce the notion of entropy rate of partition $\mathcal{P}$ denoted by $ER_\mu (\mathcal{P}$,F) and define the measurable 
entropy rate $ER_\mu (A^{\Z^2},F)$ as the supremum over all the finite partitions of the entropy rate of a partition 
(see Definition \ref{ed1}, \ref{ed2} and \ref{ed3}).
Using some particular properties of the entropy of bi-dimensional cellular automata (see Lemma \ref{squareent}) we show in 
Proposition \ref{main} that there exists a partition 
$S_0$ such that $ER(A^{\Z^2},F)= ER_\mu (S_0,F)$ when $\mu$ is an $F$-invariant and shift commuting measure and establish in Proposition \ref{finite} that 
the the entropy rate is finite ($ER_\mu (A^{\Z^2},F)= ER_\mu (S_0,F)\le 8r\ln (\#A)$).

Next we show that for an $F$ and shift-invariant measure the entropy rate denoted by $ER_\mu (A^{\Z^2},F)$ is an invariant for the class of shift commuting isomorphism (see Proposition \ref{invar1}). In Subsection \ref{subsectionERS0} we prove that entropy rate of the partition $\mathcal{S}_0$:  $ER_\mu (S_0,F)$ is an invariant for continuous and 
shift-invariant isomorphism for all $F$-invariant measure $\mu$.

We also prove that for any CA $F: A^{\Z^2}\to A^{\Z^2}$ of radius $r$  permutative at the four sides of the square $E_r$ used to define the local rule $f$ (see Definition \ref{dpermu}) we can compute explicitly the entropy rate and obtain   
$ER_{\mu_\lambda} (A^{\Z^2},F)=8r\ln (\#A)$ where $\mu_\lambda$ is the uniform measure on $A^{\Z^2}$.
When there is less than 4 sides of the square $E_r$  with permutatives points we compute the entropy rate for the subclass 
of additive cellular automata and show that the entropy rate is proportional with the number of permutative points 
(see Proposition \ref{additiveCA}).
This result could be compared with the entropy of  additive one dimensional CA  where there is also a proportion between the entropy 
and the  number of permutative points (see \cite{damico}).

Moreover we also note that the uniform measure on $A^{\Z^2}$ is a measure of {\it maximum  entropy rate} for the 
classe of permutative CA whereas the uniform measure on $A^{\Z}$ is a measure of {\it maximum 
entropy} for permutative one-dimensional CA.
 More generaly we show in Theorem \ref{tmajor} that for any bi-dimensional cellular automaton $F$
  and  measure $\mu$ invariant by $F$ and by the group of shift $\Sigma$ on $A^{\Z^2}$ 
we have $ER_\mu (A^{\Z^2},F)\le 8r\cdot h_\mu (A^{\Z^2},\sigma )$ where $h_\mu (A^{\Z^2},\sigma)$ is the entropy of the two-dimensional shift. This result could be compared with the fact that $h_\mu (A^{\Z},F)\le 2r\cdot h_\mu (A^{\Z},\sigma)$ proved in Proposition  \ref{major1d} with the same setting for the measure.  
We note that the last inequality is optimal in a sense that it is an equality in the permutative case and 
that it is not possible to establish an analog one linking the entropy of the two dimensional shift and 
the entropy of the CA. Moreover the 
proof requires the use of many properties of the entropy and conditional entropy. 

\medskip
 
In Section \ref{topo} we introduce the topological entropy rate and show that like the measurable entropy rate,   
 it is finite (Proposition \ref{ertdef1}) and that it is an invariant for shift commuting homeomorphisms of $A^{\Z^2}$ 
 (Proposition \ref{invart}). 
Next we show  that for all positive integer $k\ge 1$ one has $ER(A^{\Z^2},F)=k\cdot ER(A^{\Z^2},F)$. This property is also 
shared by the entropy and the measurable entropy rate. 
Then we give a relation between the two entropy rate showing (see Proposition \ref{vp} ) that 
$$ER(A^{\Z^2},F)\ge \sup_{\mu\in M (F,\sigma )}\{ER_\mu (A^{\Z^2},F)\}
$$
 and 
$$ER(\mathcal{S}_0,F)\ge \sup_{\mu\in M (F)}\{ER_\mu (\mathcal{S}_0,F) $$ 
where $M(F)$ is the set of $F$-invariant measures and  $M(F,\sigma)$  the subset of $M(F)$ of  measures  invariant  for the group of shift $\Sigma$ on $A^{\Z^2}$.

\medskip

Another result shows (see  Proposition \ref{plongement}) that  topological entropy rate depends mainly on the local rule of the CA and not on  the dimension of the CA space. 
 More precisely when a  CA acts on a two-dimensional space but its block map can be reduced to a 
one-dimensional one, its topological entropy rate is equal (up to some multiplicative constant) to the  entropy of the corresponding one-dimensional CA. 


  All the presents results seem to show that 
entropy rate is a rather well extended notion of entropy for multi-dimensional cellular automata and could be used to 
 make progress in the understanding of these particular dynamical systems.
Some drawback could appear, for example the definition  use a  limit  superior 
($ER_\mu(\mathcal{S}_0,F) = \limsup_{n \to \infty} \frac{1}{n} h_\mu(\mathcal{S}_n,F)$)   instead of the entropy that appears like 
a simple limit. 
Nevertheless the entropy rate of permutative CA came from a limit (see Remark \ref{rem-converge} and Proposition \ref{permu-convergence}) and the values  
$\limsup_{n \to \infty} \frac{1}{n} h_\mu(\mathcal{S}_n,F)$ and $\liminf_{n \to \infty} \frac{1}{n} h_\mu(\mathcal{S}_n,F)$ differ 
only no maximum of a factor 8  (see Proposition \ref{compare-lim} and Proposition \ref{eq-mesure} (ii) for the topological case).
Moreover this last property   gives more 
meaning to the properties $ER_{\mu}(F)=0$ and $ER(F)=0$ that could be linked with some dynamical properties of the two dimensional  CA  
as it occurs  for the 
properties $h_\mu (F)=0$ and $h(F)=0$ (see for instance \cite{BT2000}, \cite{XT} and \cite{ti2009}).

Note that those results (for the topological and  measurable case) can easily be extended to dimensions higher than two using more complex notations.

\section{Definitions and background}\label{def}

\subsection{Symbolic spaces and  cellular automata}\label{defCA}

Let $A$ be a finite set or {\it alphabet}; its cardinality is denoted by $\# A$. 
For an integer $d\ge 1$ let  $A^{\Z^d}$ be the set of all maps $x\colon \Z^d \to A$; any such map 
 $x\in A^{\Z^d}$ is called a {\it configuration}. 
Given a finite subset $C$ of $\Z^d$, one defines a {\it pattern on} $C$ as a map $P \colon C \to A$, in other words, an element of $A^C$. When $d>1$ the usual concatenation of words can be extended to some patterns in the following way: given $C$, $C'\subset \Z^d$ such that $C\cap C' = \emptyset$ and two patterns, $P$ on  $C$ and $P'$ on $C'$, the pattern $P\bullet P'$ on $C\cup C'$ is the one such that $(P\bullet P')(z)= P(z) \hbox{ for } z \in C$ and $(P\bullet P')(z)= P'(z) \hbox{ for } z \in C'$. Again for $C\subset \Z^d$, the pattern $x_C$ is just the restriction of the map $x$ to the set of coordinates $C$.

The configuration space $A^{\Z^d}$ is endowed with the product
of the discrete topologies on the various coordinates.
For this topology $A^{\Z^d}$ is a compact metric
space. For $z=(i,j) \in \Z^2$ put $\vert z\vert = \sqrt{i^2+j^2}$; a metric compatible with this topology  is defined by the distance
$d(x,y)=2^{-h}$ where $h=\min\{\vert z\vert \,\mbox{ such that } x_z\ne y_z\}$.
The {\it shift} maps $\sigma^{i,j} \colon A^{\Z^d}\to A^{\Z^d},\ i,j\in \Z$ are defined by 
$\sigma^{i,j} (x)_{k,l}=(x_{k+i,l+j}),\ k, l\in \Z$. For $t \in \Z$ and $v = (i,j) \in \Z^2$ put $t.v = (ti,tj)$. The shift maps form a group. It is worth while to consider this {\it group of shifts} $\Sigma =\{\sigma^{i,j} |  i,j\in \Z\}$ as acting on $A^{\Z^d}$;
the dynamical system $(A^{\Z^d} ,\Sigma )$ is often called the {\it full shift} of dimension $d$. 

All probability measures $\mu$ on $A^{\Z^d}$ that we consider are defined on the Borel sigma-algebra $\mathcal{B}$ generated by the topology of $A^{\Z^d}$. 
 

The Curtis-Hedlund-Lyndon theorem 
states  that for every  cellular automaton $F$ there is a finite set $C\subset \Z^d$ and a map $f$ from the set of patterns on $C$ to $A$ such that for $z\in \Z^d$ one has $F(x)_z=f(x_{C+z})$; $f$ is called the {\it local map} of the CA $F$. One easily sees that equivalently there exist $r \in \N$, $E_r$ being the square centered at the origin of size $2r+1$ and a map $ f$ from the set of patterns on $E_r$ to $A$ with the same property. This is the form we are going to use. In this case the integer $r$ is called the radius of $F$.
Recall that the uniform measure on  $A^{\Z^d}$ is invariant under a cellular automaton $F$, i.e., $\mu\circ F = \mu$, if and only if $F$ is onto \cite{Hedlund}. 

\subsection{Entropy}\label{defent}
Given some probability space $(X,\mathcal{A},\mu)$ let $\mathbf{F}(X)$ be the set of all finite $\mathcal{A}$-measurable partitions of $X$.
If $\mathcal{P} =\{P_1,\ldots ,\,
P_n\}$ and $\mathcal{Q} =\{Q_1,\ldots ,\, Q_m\}$ are two measurable partitions of $X$, denote
by $ \mathcal{P} \vee \mathcal{Q}$ the partition $\{P_i\cap Q_j\, ; 1\le i \le n; \,\,
1\le j\le m\}$.
If for all $1\le i\le n$ there exists a subset $J\subset [1,\ldots,m]$ such that $P_i=\cup_{j\in J}Q_j$ we 
write that  $\mathcal{P}\curlyeqprec \mathcal{Q}$. 

Put $H_\mu (\mathcal{P} ) = \sum_{P\in\mathcal{P}}\mu (P)\log \mu (P)$. $H_\mu$ is {\it sub-additive}, that is, $H_\mu(\mathcal{P}\vee\mathcal{Q})\le H_\mu(\mathcal{P})+H_\mu(\mathcal{Q})$.
Whenever $\mathcal{P},\ \mathcal{Q} \in \mathbf{F}(X)$ and $\mathcal{P}\curlyeqprec \mathcal{Q}$ one has $H_\mu(\mathcal{P})\le H_\mu(\mathcal{Q})$.
By (\cite[Theorem 4.3]{Wa})
\begin{equation}\label{entrop}
\begin{array}{lll} 
(i)&H_\mu (\mathcal{P}\vee\mathcal{Q}/\mathcal{R})=H_\mu (\mathcal{P}/\mathcal{R})+H_\mu (\mathcal{Q}/\mathcal{P}\vee \mathcal{R}\le H_\mu (\mathcal{P}/\mathcal{R})+H_\mu (\mathcal{Q}/\vee \mathcal{R})\cr
(ii)&H_\mu (\mathcal{P}\vee\mathcal{Q})=H_\mu (\mathcal{P})+H_\mu (\mathcal{Q}/\mathcal{P})\le H_\mu (\mathcal{P})+H_\mu (\mathcal{Q}). \cr
\end{array}
\end{equation}

Let $T$ be a measurable transformation of $X$ leaving $\mu$ invariant: $\mu\circ T = \mu$.
The {\it entropy of the partition} $\PP$ with respect to $T$ is defined as $h_\mu (\PP,T) =
\lim_{n\to\infty}\frac{1}{n}H_\mu (\vee_{i=0}^{n-1}T^{-i}\PP )$. Remark that $h_\mu (\PP,T)$ is well-defined because by sub-additivity of $H_\mu$ the sequence $\frac{1}{n}H_\mu (\vee_{i=0}^{n-1}T^{-i}(\PP) )$ is non-increasing with $n$; in particular this implies that $h_\mu (\PP,T) \le H_\mu(\PP)$. Finally the {\it entropy of} $(X,T,\mu )$ is
$h_\mu(T) = \sup_{\PP \in \mathbf{F}(X)} h_\mu (\PP,T )$. 
Recall that $H_\mu (T^{-i}\PP)=H_\mu (\PP)$  and by \cite[Theorem 4.12]{Wa}
\begin{equation}\label{entineq}
h_\mu(\mathcal{Q},T) \le h_\mu(\mathcal{P},T)+H_\mu(\QQ|\PP).
\end{equation}
An {\it isomorphism} between two measure-theoretic dynamical systems $(X,\mathcal{A},\mu,T)$ and $(X',\mathcal{A}',\mu',T')$ is a 1-to-1, bi-measurable map $\varphi$ between two sets $E\in \mathcal{A}$ and $E'\in \mathcal{A}'$ such that $\mu(E) = \mu'(E') = 1$ and that $\varphi\circ T = T'\circ \varphi$ on the set $E$. When such a map exists $h_\mu(T) = h_{\mu'}(T')$, in other words the entropy is invariant under isomorphisms.

\medskip
Now for the topological setting. If $\UU,\ \VV$ are open covers of a compact space $X$ their join $\UU\vee \VV$ is the open cover consisting of all sets of the form $A\cap B$ 
where $A\in \UU$ and $B\in \VV$.
An open cover $\UU$ is {\it coarser} than an open cover $\VV$, or $\UU\curlyeqprec \VV$, if every element of $\VV$ is a subset of an element of $\UU$. If $\UU \preceq \VV$ and $\UU' \preceq \VV'$ then $\UU\vee \UU' \curlyeqprec \VV\vee \VV'$.

When $\UU$ is an open cover of $X$, put $H(\UU )=\ln (N(\UU))$, where $N(\UU )$ denotes the  smallest cardinality of a finite subcover of $\UU$. Like $H_\mu$ the function $H$ is sub-additive, in this case, $H(\mathcal{U}\vee\mathcal{V})\le H(\mathcal{U})+H(\mathcal{V})$. Whenever $\VV\curlyeqprec \UU$ one has $H(\VV)\le H(\UU)$.  

Let $T$ be a surjective continuous map of $X$. By sub-additivity of $H$ the sequence $\frac{1}{n}H (\vee_{i=0}^{n-1}T^{-i}(\UU) )$ is non-increasing with $n$; the {\it topological entropy} of the cover $\UU$ with respect to $T$ is defined as $h (\UU,T) =
\lim_{n\to\infty}\frac{1}{n}H (\vee_{i=0}^{n-1}T^{-i}(\UU) )$ and the {\it entropy} of $(X,T )$ is
$h(X,T) = \sup_\UU h(\UU,T)$ on the set $\mathbf{R}(A^{\Z^2})$ of all finite open covers of $X$. When $\UU$ is an open cover $h (\UU,T)\le H(\UU)$; when $\VV \preceq \UU$ are two open covers one has $h(\VV,T) \le h(\UU,T)$. Another important inequality is
\begin{equation}\label{entroptop}
h(\mathcal{U}\vee\mathcal{V},T)\le h(\mathcal{U},T)+h(\mathcal{V},T).
\end{equation}
Of course topological entropy is invariant under (topological) conjugacy, that is, if $\varphi\colon (X,T) \to (X',T')$ is a one-to-one continuous map such that $\varphi\circ T = T'\circ \varphi$, then $h(X,T) = h(X',T')$.


\section{Entropy rate for a measure}\label{edens}
Here we define the entropy rate of a cellular automaton $F$ for an $F$-invariant measure $\mu$. Then some of its basic properties are explored.

We first introduce two families of finite subsets of $\Z^2$ ($E_n$ was less formally introduced in the first Section):

\begin{de}\label{ed1}
$E_n\subset \Z^2$ is defined to be the square of size $2n+1$ centred at the origin: $E_n = \{v=(i,j)\in \Z^2\ |\ -n \le i, j\le n\}$. \\
For $n\ge r$, where $r$ is the radius of the CA, $E'_n$ is the outer band of width $r$ of $E_n$: $E'_n = E_n \setminus E_{n-r}$. 
\end{de}

To a finite measurable partition $\mathcal{P} \in \mathbf{F}(A^{\Z^2})$ one associates two other finite partitions with the help of $E_n$ and $E'_n$,:

\begin{de}\label{ed2}
For $\mathcal{P}\in \mathbf{F}(A^{\Z^2})$ one defines 
$$\mathcal{P}_n=\bigvee_{v \in E_n} \sigma^v(\mathcal{P})\ (\hbox{for } n \in \N)$$ and
$$\mathcal{P}'_n=\bigvee_{v \in E'_n} \sigma^v(\mathcal{P})\ (\hbox{for }n\ge r).$$
 \end{de}

 When setting $\mathcal{P}=\mathcal{S}_0$, where $\mathcal{S}_0$ is the clopen partition according to the value of the 0th coordinate, one has a particular expression for $(\mathcal{S}_0)_n$, which we denote by $\mathcal{S}_n$:
 $$\mathcal{S}_n=\bigvee_{v \in E_n} \sigma^v(\mathcal{S}_0)= (\{x\in A^{\Z^2}\ | \ x|_{E_n} = c\}\ | \ c\in A^{E_n}).$$
 Likewise put
 $$\mathcal{S}'_n=\bigvee_{v \in E'_n} \sigma^v(\mathcal{S}_0)= (\{x\in A^{\Z^2}\ | \ x|_{E'_n} = c\}\ | \ c\in A^{E'_n}).$$
 
The partitions $\mathcal{P}_n$ and $\mathcal{P}'_n$ have been introduced here in their general form for proving Propositions \ref{main} and \ref{invar1}. Apart from this technical use we do not understand their meaning well. In the sequel we use them mostly in one particular case, when $\mathcal{P}=\mathcal{S}_k$ or $\mathcal{S}'_k$ for some $k$; in this case they are clopen partitions according to local patterns, a classical tool in symbolic dynamics. 
 
Two properties of the partitions $\mathcal{S}_n,\ n \in \N$ do not hold for the partitions $\mathcal{S}'_n$: by the definitions $\mathcal({S}_i)_j = \mathcal{S}_{i+j}$; and the partitions $\mathcal{S}_n,\ n \in \N$ generate increasing algebras that converge to the Borel $\sigma$-algebra on $A^{\Z^2}$. The last property implies in particular that if $F$ is a CA and $\mu$ is an $F$-invariant measure on $A^{\Z^2}$ one has $h_\mu(A^{\Z^2},F) = \lim_{n \to \infty} h_\mu(\mathcal{S}_n, F)$ \cite{Wa}. As $\mathcal{S}_n$ is also an open cover of $A^{\Z^2}$, and since for any finite open cover $U$ there is $N$ such that $U\curlyeqprec S_N$, one also has $h(A^{\Z^2},F) = \lim_{n \to \infty} h(\mathcal{S}_n, F)$ \cite{Wa}.

\begin{de}\label{ed3}
Let $F$ be a cellular automaton on $A^{\Z^2}$ with radius $r$, and let $\mu$ be a probability measure on $A^{\Z^2}$, invariant under $F$. If $\mathcal{P}$ is a finite measurable partition of $A^{\Z^2}$, its entropy rate is
$$ER_\mu(\mathcal{P},F) = \limsup_{n \to \infty} \frac{1}{n} h_\mu(\mathcal{P}'_n,F);$$
the entropy rate of the dynamical system $(A^{\Z^2},F)$ endowed with the measure $\mu$ is the non-negative real number 
$$ER_\mu(A^{\Z^2},F) = \sup \{ER_\mu(\mathcal{P},F)\ |\ \mathcal{P} \in \mathbf{F}(A^{\Z^2})\}.$$
\end{de}

The first step for investigating entropy rate consists in remarking that entropy rate is the same for partitions $\mathcal{S}_n$ and $\mathcal{S}'_n$, and also the same for $\mathcal{S}_n$ and $\mathcal{S}_m$, $m\ne n$. 

\begin{lem}\label{squareent}
Let $F$ be a cellular automaton with radius $r$ acting on $A^{\Z^2}$, and $\mu$ be an $F$-invariant measure.\\
(i) Whenever $n\ge r$ one has
$$h_\mu(\mathcal{S}'_n,F) = h_\mu(\mathcal{S}_n,F),$$
\noindent (ii) for $n\ge r$ and $m\in\N$ one has
$$ER_\mu(\mathcal{S}_n,F)=ER_\mu(\mathcal{S'}_n,F)
\mbox{ and }ER_\mu (S_m)=ER_\mu(\mathcal{S}_0,F)$$ 
\end{lem}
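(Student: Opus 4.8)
The plan is to treat (i) as the engine and then derive (ii) from it by purely geometric comparisons of partitions together with a squeeze. Throughout I use only that $\mu$ is $F$-invariant (so that $H_\mu(F^{-i}\mathcal{P})=H_\mu(\mathcal{P})$), the monotonicity $\mathcal{P}\curlyeqprec\mathcal{Q}\Rightarrow H_\mu(\mathcal{P})\le H_\mu(\mathcal{Q})$, sub-additivity of $H_\mu$, and the fact that $F^{-i}$ and finite joins preserve the refinement relation $\curlyeqprec$; in particular no shift-invariance of $\mu$ is needed.

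For (i), one inequality is immediate: since $E'_n\subseteq E_n$ we have $\mathcal{S}'_n\curlyeqprec\mathcal{S}_n$, hence $\bigvee_{i=0}^{N-1}F^{-i}\mathcal{S}'_n\curlyeqprec\bigvee_{i=0}^{N-1}F^{-i}\mathcal{S}_n$ and so $h_\mu(\mathcal{S}'_n,F)\le h_\mu(\mathcal{S}_n,F)$. The reverse inequality is the heart of the matter. The key observation is the radius condition: because $F$ has radius $r$ and $E_{n-r}+E_r=E_n$, the values of $F(x)$ on $E_{n-r}$ are determined by $x|_{E_n}$, i.e. $F^{-1}(\mathcal{S}_{n-r})\curlyeqprec\mathcal{S}_n=\mathcal{S}_{n-r}\vee\mathcal{S}'_n$. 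Writing $\mathcal{A}_i=F^{-i}(\mathcal{S}_{n-r})$ and $\mathcal{B}_i=F^{-i}(\mathcal{S}'_n)$ and applying $F^{-i}$ to this relation gives the recursion $\mathcal{A}_{i+1}\curlyeqprec\mathcal{A}_i\vee\mathcal{B}_i$. A straightforward induction then yields $\mathcal{A}_k\curlyeqprec\mathcal{A}_0\vee\bigl(\bigvee_{i=0}^{k-1}\mathcal{B}_i\bigr)$, so that $\bigvee_{i=0}^{N-1}F^{-i}\mathcal{S}_n=\bigl(\bigvee_i\mathcal{A}_i\bigr)\vee\bigl(\bigvee_i\mathcal{B}_i\bigr)\curlyeqprec\mathcal{S}_{n-r}\vee\bigvee_{i=0}^{N-1}F^{-i}\mathcal{S}'_n$. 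Taking $H_\mu$, using monotonicity and sub-additivity, dividing by $N$ and letting $N\to\infty$, the bounded term $\frac1N H_\mu(\mathcal{S}_{n-r})$ vanishes and we obtain $h_\mu(\mathcal{S}_n,F)\le h_\mu(\mathcal{S}'_n,F)$, closing (i).

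For (ii) I would first record that, by Definition \ref{ed3}, $ER_\mu(\mathcal{S}_0,F)=\limsup_k\frac1k h_\mu(\mathcal{S}'_k,F)$, and that unravelling the definitions identifies $(\mathcal{S}_m)'_k$ (resp. $(\mathcal{S}'_n)'_k$) with the clopen partition according to $x$ restricted to the Minkowski sum $E'_k+E_m$ (resp. $E'_k+E'_n$). The strategy is then to sandwich each of these partitions between two partitions $\mathcal{S}'_{k'}$ and $\mathcal{S}_{k''}$ with $k',k''=k+O(1)$, using the inclusions $E'_k\subseteq E'_k+E_m\subseteq E_{k+m}$ and $E'_{k+n}\subseteq E'_k+E'_n\subseteq E_{k+n}$ of index sets, which translate into refinement relations. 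Combined with (i), which replaces every $\mathcal{S}_{k+c}$ by $\mathcal{S}'_{k+c}$ without changing $h_\mu(\cdot,F)$, this gives for instance $h_\mu(\mathcal{S}'_k,F)\le h_\mu((\mathcal{S}_m)'_k,F)\le h_\mu(\mathcal{S}'_{k+m},F)$. Dividing by $k$ and taking $\limsup$, the outer terms both converge to $ER_\mu(\mathcal{S}_0,F)$, since $\frac{k+c}{k}\to1$ while $\frac1{k+c}h_\mu(\mathcal{S}'_{k+c},F)$ stays bounded (it is at most $\frac{|E'_{k+c}|}{k+c}\log\#A=O(1)$); the squeeze yields $ER_\mu(\mathcal{S}_m,F)=ER_\mu(\mathcal{S}_0,F)$ for every $m$. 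The identical sandwich applied to $E'_{k+n}\subseteq E'_k+E'_n\subseteq E_{k+n}$ gives $ER_\mu(\mathcal{S}'_n,F)=ER_\mu(\mathcal{S}_0,F)$ for $n\ge r$, and the two equalities together furnish both assertions of (ii), in particular $ER_\mu(\mathcal{S}_n,F)=ER_\mu(\mathcal{S}'_n,F)$.

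The main obstacle I expect is twofold. First, in (i) one must get the direction of the radius condition exactly right: it is the \emph{inner} square $\mathcal{S}_{n-r}$ whose $F$-image is controlled by the full square $\mathcal{S}_n$, and it is essential that the telescoping produces the single defect partition $\mathcal{S}_{n-r}$, independent of $N$, so that it disappears after dividing by $N$. Second, in (ii) the only genuinely non-formal point is the geometric inclusion $E'_{k+n}\subseteq E'_k+E'_n$ (the outer band at radius $k+n$ must be realized as a sum of the two thinner bands), which requires a short $\ell^\infty$ coordinate computation splitting the dominant coordinate $u_1$ as $a+b$ with $a\in[k-r+1,k]$ and $b\in[n-r+1,n]$; the other inclusions are obvious. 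Everything else is bookkeeping with $\curlyeqprec$, sub-additivity, and the elementary fact that multiplying a bounded sequence by $\frac{k+c}{k}$ does not change its $\limsup$.
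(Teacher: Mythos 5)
Your proposal is correct and follows essentially the same route as the paper: part (i) is exactly the paper's telescoping argument (the radius condition $F^{-1}(\mathcal{S}_{n-r})\curlyeqprec\mathcal{S}_{n-r}\vee\mathcal{S}'_n$ absorbs all but one copy of $\mathcal{S}_{n-r}$, whose contribution dies after dividing by $N$), and your induction even states the leftover term more cleanly than the paper does. For part (ii) the paper argues in two lines via the identity $(\mathcal{S}_0)_i=\mathcal{S}_i$ and a reindexing of the $\limsup$; your sandwich of $(\mathcal{S}_m)'_k$ and $(\mathcal{S}'_n)'_k$ between $\mathcal{S}'_{k+O(1)}$ and $\mathcal{S}_{k+O(1)}$ is the same idea carried out with the primed partitions of Definition \ref{ed3} made explicit, which is a welcome but not essentially different elaboration.
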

\begin{proof}
(i) By the definition of entropy and since $\mathcal{S}_n = \mathcal{S}'_n \vee \mathcal{S}_{n-r}$,\begin{equation}\label{eq:def}
h_\mu(\mathcal{S}_n,F) = h_\mu(\mathcal{S}'_n\vee \mathcal{S}_{n-r},F) = \lim_{N\to\infty} \frac{1}{N} H_\mu\left(\bigvee_{i=0}^{N-1} F^{-i}(\mathcal{S}'_n) \bigvee_{i=0}^{N-1} F^{-i}(\mathcal{S}_{n-r})\right).
\end{equation} 
Because $F$ is a cellular automaton with radius $r$, the $v$th coordinate of $F(x)$, $v \in \Z^2$, is determined by all coordinates of $x$ that are within the square $E_r+v$. In particular all coordinates of $F(x)$ in $E_{n-r}$ are completely determined by the coordinates of $x$ in $E_n = E'_n \cup E_{n-r}$, that is to say, $\mathcal{S}_{n-r} \curlyeqprec F^{-1}(\mathcal{S}'_n \vee \mathcal{S}_{n-r})$ and more generally $F^{-i}(\mathcal{S}_{n-r})  \curlyeqprec F^{-i-1}(\mathcal{S}'_n \vee \mathcal{S}_{n-r})$. Applying $F^{-1}$ inductively and using this remark each time one gets  
$$\bigvee_{i=0}^{N-1} F^{-i}(\mathcal{S}'_n) \bigvee_{i=0}^{N-1} F^{-i}(\mathcal{S}_{n-r}) = \bigvee_{i=0}^{N-1} F^{-i}(\mathcal{S}'_n)\vee F^{-N+1}(\mathcal{S}_{n-r}).$$
Inject this simpler form into (\ref{eq:def}) and then apply (\ref{entrop}(ii)). This yields: 
$$h_\mu(\mathcal{S}_n,F) \le   \lim_{N\to\infty} \frac{1}{N} H_\mu\left(\bigvee_{i=0}^{N-1} F^{-i}(\mathcal{S}'_n)\right)+  \lim_{N\to\infty} \frac{1}{N} H_\mu\left(F^{-N+1}(\mathcal{S}_{n-r})\right),$$
hence
$$h_\mu(\mathcal{S}_n,F) \le  h_\mu(\mathcal{S}'_n,F)+  \lim_{N\to\infty} \frac{1}{N} H_\mu(F^{-N+1}(\mathcal{S}_{n-r})).$$
Now since $\mu$ is $F$-invariant the real number $H_\mu(F^{-N+1}(\mathcal{S}_{n-r})) = H_\mu(\mathcal{S}_{n-r})= K$ does not depend on $N$, so that in the end 
$$h_\mu(\mathcal{S}_n,F) \le  h_\mu(\mathcal{S}'_n,F)+  \lim_{N\to\infty} \frac{1}{N} K= h_\mu(\mathcal{S}'_n,F).$$
The reverse inequality is obvious since $\mathcal{S}'_n \curlyeqprec \mathcal{S}_n$. This establishes the first claim. 

(ii) Fix  $i \ge 0$: because of the obvious identity $(S_0)_i = S_i$ one has
$$ER_\mu(\mathcal{S}_i,F)=  \limsup_{n \to \infty} \frac{1}{n} h_\mu(S_{n+i},F) = \limsup_{n \to \infty} \frac{1}{n+i} h_\mu(S_{n+i},F) 
= ER_\mu(\mathcal{S}_0,F).$$
Using (i) the equality $ER_\mu(\mathcal{S}_m,F)=ER_\mu(\mathcal{S'}_m,F)$ immediately follows.
\end{proof}

With the help of this Lemma one shows that the entropy rate of the `square' partitions $\mathcal{S}_n$ is finite and does not depend on $n$. 

\begin{pro}\label{finite}
For any cellular automaton $F$ acting on $A^{\Z^2}$, any $F$-invariant measure $\mu$, any $i\ge 0$
$$ER_\mu(\mathcal{S}_i,F)=ER_\mu(\mathcal{S}_0,F)\le 8r\log(\# A)<\infty.$$
\end{pro}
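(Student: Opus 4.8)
The plan is to handle the two assertions separately. The equality $ER_\mu(\mathcal{S}_i,F)=ER_\mu(\mathcal{S}_0,F)$ for every $i\ge 0$ is exactly the second identity of Lemma \ref{squareent}(ii), so nothing remains to be done there. All the work goes into the numerical bound $ER_\mu(\mathcal{S}_0,F)\le 8r\log(\#A)$, and the point is that the entropy rate of $\mathcal{S}_0$ is computed through the \emph{band} partitions $\mathcal{S}'_n$ rather than through the full square partitions $\mathcal{S}_n$: by Definition \ref{ed3} one has $(\mathcal{S}_0)'_n=\mathcal{S}'_n$, hence
$$ER_\mu(\mathcal{S}_0,F)=\limsup_{n\to\infty}\frac{1}{n}\,h_\mu(\mathcal{S}'_n,F).$$
The whole estimate then rests on the fact that the band $E'_n$ contains only \emph{linearly many} cells in $n$.

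For the bound itself I would use only the elementary entropy inequalities recalled in Subsection \ref{defent}. Since the entropy of a partition with respect to $F$ never exceeds its static entropy, $h_\mu(\mathcal{S}'_n,F)\le H_\mu(\mathcal{S}'_n)$, and since $H_\mu(\mathcal{S}'_n)$ is bounded by the logarithm of the number of its atoms, it suffices to count the atoms of $\mathcal{S}'_n$. As this partition records the values of a configuration on the coordinates of $E'_n$, it has at most $(\#A)^{|E'_n|}$ atoms, so
$$h_\mu(\mathcal{S}'_n,F)\le |E'_n|\,\log(\#A).$$

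It remains to count $|E'_n|$ and pass to the limit. By Definition \ref{ed1}, $E'_n=E_n\setminus E_{n-r}$, where $E_n$ and $E_{n-r}$ are squares of sides $2n+1$ and $2n-2r+1$; hence $|E'_n|=(2n+1)^2-(2n-2r+1)^2=2r(4n-2r+2)=8rn-4r^2+4r$. Dividing by $n$ gives $\frac{1}{n}|E'_n|\log(\#A)=\bigl(8r-\tfrac{4r^2-4r}{n}\bigr)\log(\#A)$, whose limit as $n\to\infty$ is $8r\log(\#A)$. Combining this with the previous paragraph and taking the limit superior yields $ER_\mu(\mathcal{S}_0,F)\le 8r\log(\#A)<\infty$, which is the desired inequality.

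I do not anticipate a genuine obstacle: the argument is a direct majorization, and the only quantitative care needed is the exact cell-count of the band. The conceptually important observation --- and the reason the constant is precisely $8r$ --- is that the width-$r$ frame around a square of side $2n+1$ has a perimeter-type cardinality asymptotic to $8rn$ (four sides of length roughly $2n$, each of thickness $r$). Had the entropy rate been defined through the full squares $\mathcal{S}_n$, whose cardinality grows like $(2n+1)^2$, the same estimate would have produced a quadratic, hence useless, bound; it is Lemma \ref{squareent}(i), guaranteeing $h_\mu(\mathcal{S}'_n,F)=h_\mu(\mathcal{S}_n,F)$, that reconciles the two viewpoints and legitimizes working with the cheaper band partitions.
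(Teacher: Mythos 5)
Your proposal is correct and follows essentially the same route as the paper: reduce to $ER_\mu(\mathcal{S}_0,F)=\limsup_n \frac{1}{n}h_\mu(\mathcal{S}'_n,F)$, majorize $h_\mu(\mathcal{S}'_n,F)$ by $|E'_n|\log(\#A)$, count the band, and invoke Lemma \ref{squareent}(ii) for the equality $ER_\mu(\mathcal{S}_i,F)=ER_\mu(\mathcal{S}_0,F)$. The only (immaterial) difference is that you bound $h_\mu(\mathcal{S}'_n,F)\le H_\mu(\mathcal{S}'_n)$ by counting atoms directly, whereas the paper splits $\mathcal{S}'_n=\bigvee_{v\in E'_n}\sigma^v(\mathcal{S}_0)$ and sums the bound $h_\mu(\sigma^v(\mathcal{S}_0),F)\le\log(\#A)$ over the at most $8rn$ coordinates of the band.
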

\begin{proof}
By Lemma \ref{squareent}(i), since $\mathcal{S}'_n=\bigvee_{v \in E'_n} \sigma^v(\mathcal{S}_0)$, and by (\ref{entrop}(ii))
$$ER_\mu(\mathcal{S}_0,F) = \limsup_{n \to \infty} \frac{1}{n} h_\mu(\mathcal{S}'_n,F) = \limsup_{n \to \infty} \frac{1}{n} h_\mu(\bigvee_{v \in E'_n} (\sigma^v(\mathcal{S}_0,F))
$$
$$
\le \limsup_{n \to \infty} \frac{1}{n}\sum_{v \in E'_n} h_\mu(\sigma^v(\mathcal{S}_0),F).$$ 
Now as $\sigma^v(\mathcal{S}_0)$ is the partition according to the coordinate $v$, by elementary upper bounds one gets 
$$h_\mu(\sigma^v(\mathcal{S}_0),F) \le H_\mu(\sigma^v(\mathcal{S}_0)) \le \log(\# A).$$ 
Combined with the above upper bound for $ER_\mu(\mathcal{S}_0,F)$ and since the cardinality of $E'_n$ is less than or equal to $8rn$ this yields
$$ER_\mu(\mathcal{S}_0,F) \le \limsup_{n \to \infty} \frac{1}{n}\cdot 8rn\log(\# A) = 8r\log (\# A).$$
In view of Lemma \ref{squareent}(ii) this finishes the proof . 
\end{proof}

Of course this result would be false without the factor $\frac{1}{n}$ in the definition of $ER_\mu(\mathcal{P},F)$. 

In order to prove that entropy rate is a natural notion, one must make a new assumption: the measure $\mu$ should be shift-invariant, in addition to the previous requirement of being $F$-invariant. Call {\it bi-invariant} any measure that is invariant both under $F$ and under the group of shifts. 

\begin{pro}\label{main}
Let $\mu$ be a bi-invariant measure. For  any finite measurable partition $\mathcal{P}$ of $A^{\Z^2}$ one has
$$ER_\mu(\mathcal{P},F) \le ER_\mu(\mathcal{S}_0,F),$$
and therefore
$$ER_\mu(A^{\Z^2},F) = ER_\mu(\mathcal{S}_0,F).$$
\end{pro}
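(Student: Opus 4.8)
The plan is to show that an arbitrary finite partition $\mathcal{P}$ can be approximated, in the sense relevant for entropy, by one of the square partitions $\mathcal{S}_k$, and then to leverage the fact (already established in Proposition \ref{finite}) that $ER_\mu(\mathcal{S}_k,F) = ER_\mu(\mathcal{S}_0,F)$ for every $k$. Since the partitions $\mathcal{S}_k$ generate the Borel $\sigma$-algebra as $k\to\infty$, for any fixed finite measurable $\mathcal{P}$ and any $\varepsilon>0$ there exists $k$ such that $\mathcal{P}$ is well-approximated by a $\mathcal{S}_k$-measurable partition; more precisely, the conditional entropy $H_\mu(\mathcal{P}\mid \mathcal{S}_k)$ can be made arbitrarily small. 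This is the standard martingale/generating argument, and it is where the Borel-generating property of the $\mathcal{S}_k$ is used.

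Next I would transfer this approximation from the level of $\mathcal{P}$ itself to the level of the spread-out partitions $\mathcal{P}'_n$ appearing in the definition of $ER_\mu(\mathcal{P},F)$. The key observation is that $\mathcal{P}'_n = \bigvee_{v\in E'_n}\sigma^v(\mathcal{P})$, and because $\mu$ is \emph{bi-invariant}, the shifts $\sigma^v$ preserve the conditional-entropy estimate: $H_\mu(\sigma^v(\mathcal{P})\mid \sigma^v(\mathcal{S}_k))=H_\mu(\mathcal{P}\mid\mathcal{S}_k)$. Summing over $v\in E'_n$ (whose cardinality is at most $8rn$) and using sub-additivity of conditional entropy, I expect to bound $H_\mu\bigl(\mathcal{P}'_n \mid (\mathcal{S}_k)'_n\bigr)$ by roughly $8rn\cdot H_\mu(\mathcal{P}\mid\mathcal{S}_k)$. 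Here $(\mathcal{S}_k)'_n = \bigvee_{v\in E'_n}\sigma^v(\mathcal{S}_k)$, which is itself a square-type partition (a band partition refined by $\mathcal{S}_k$), so that by Lemma \ref{squareent} its entropy rate is still $ER_\mu(\mathcal{S}_0,F)$.

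Now I would invoke inequality (\ref{entineq}), $h_\mu(\mathcal{Q},T)\le h_\mu(\mathcal{P},T)+H_\mu(\mathcal{Q}\mid\mathcal{P})$, with $T=F$, to write
$$h_\mu(\mathcal{P}'_n,F)\le h_\mu\bigl((\mathcal{S}_k)'_n,F\bigr)+H_\mu\bigl(\mathcal{P}'_n\mid (\mathcal{S}_k)'_n\bigr).$$
Dividing by $n$ and taking $\limsup_{n\to\infty}$, the first term on the right yields $ER_\mu(\mathcal{S}_k,F)=ER_\mu(\mathcal{S}_0,F)$, while the second term is bounded above by $\limsup_n \frac{1}{n}\cdot 8rn\, H_\mu(\mathcal{P}\mid\mathcal{S}_k)=8r\,H_\mu(\mathcal{P}\mid\mathcal{S}_k)$. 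Letting $k\to\infty$ drives $H_\mu(\mathcal{P}\mid\mathcal{S}_k)\to 0$, giving $ER_\mu(\mathcal{P},F)\le ER_\mu(\mathcal{S}_0,F)$. Taking the supremum over all finite $\mathcal{P}$ and noting that $\mathcal{S}_0$ is itself admissible yields the equality $ER_\mu(A^{\Z^2},F)=ER_\mu(\mathcal{S}_0,F)$.

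The main obstacle I anticipate is the factor of $8rn$ that appears when spreading the conditional-entropy bound across the band $E'_n$: this linear-in-$n$ blow-up is exactly cancelled by the $\frac{1}{n}$ in the definition of entropy rate, but only because the residual constant $8r\,H_\mu(\mathcal{P}\mid\mathcal{S}_k)$ can afterward be sent to zero in $k$. One must therefore be careful to take the limits in the correct order — first $n\to\infty$ (producing $ER_\mu(\mathcal{S}_0,F)$ plus a term proportional to $H_\mu(\mathcal{P}\mid\mathcal{S}_k)$), and only then $k\to\infty$ — and to justify that bi-invariance is genuinely needed to keep the per-coordinate conditional entropies uniformly equal to $H_\mu(\mathcal{P}\mid\mathcal{S}_k)$ across all $v\in E'_n$.
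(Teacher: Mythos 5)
Your proposal is correct and follows essentially the same route as the paper's proof: approximate $\mathcal{P}$ by $\mathcal{S}_k$ via the vanishing conditional entropy $H_\mu(\mathcal{P}\mid\mathcal{S}_k)$, spread this estimate over the band using the shift-invariance of $\mu$ together with subadditivity of conditional entropy, apply inequality (\ref{entineq}), and take the limits in the order $n\to\infty$ then $k\to\infty$. If anything, your version is slightly more careful than the printed one, which sums the conditional-entropy bound over the full square $E_n$ (of cardinality $(2n+1)^2$) yet states the bound $8rn\epsilon$ that only the band $E'_n$ justifies.
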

\begin{proof}
Given a finite measurable partition $\mathcal{P}$ fix some $\epsilon>0$. Since the partitions $\mathcal{S}'_n$ converge to the discrete partition as $n \to \infty$, the conditional entropy $H_\mu(\mathcal{P}|\mathcal{S}'_n)$ goes to 0 as  $n \to \infty$: choose $k$ such that $H_\mu(\mathcal{P}|\mathcal{S}'_k) \le \epsilon$.

From this inequality, keeping in mind that  $(\mathcal{S}'_k)_n=\mathcal{S}'_{n+k}$, one derives another one for $H_\mu(\mathcal{P}_n|\mathcal{S}'_{n+k})$ in the following way. 
By definition $\mathcal{P}_n=\bigvee_{v \in E_n} \sigma^v(\mathcal{P})$, so
$$H_\mu(\mathcal{P}_n|\mathcal{S}'_{n+k}) = H_\mu(\bigvee_{v \in E_n} \sigma^v(\mathcal{P})|\mathcal{S}'_{n+k}) \le \sum_{v \in E_n} H_\mu(\sigma^v(\mathcal{P})|\mathcal{S}'_{n+k}).$$
Note  that $\mathcal{S}'_{n+k}$ is a refinement of $\sigma^v(\mathcal{S}'_{k})$ for every $v\in E_n$, because the set $E_k +v \subset \Z^2$ is a subset of $E_{n+k}$. Thus  $H_\mu(\sigma^v(\mathcal{P})|\mathcal{S}'_{n+k}) \le H_\mu(\sigma^v(\mathcal{P})|\sigma^v(\mathcal{S}'_{k}))$. Due to the fact that $\mu$ is  invariant under the shifts, $H_\mu(\sigma^v(\mathcal{P})|\sigma^v(\mathcal{S}'_{k})) = H_\mu(\mathcal{P}|\mathcal{S}'_{k})$. Then it results from the former majoration of $H_\mu(\mathcal{P}_n|\mathcal{S}'_{n+k})$ that 
$$H_\mu(\mathcal{P}_n|\mathcal{S}'_{n+k})  \le \sum_{v \in E_n} H_\mu(\mathcal{P}|\mathcal{S}'_{k})\le 8rn\epsilon.$$

This, together with inequality \ref{entineq}, allows us to bound the dynamical entropy of $\mathcal{P}_n$ from above:
$$h_\mu(\mathcal{P}_n, F) \le h_\mu(\mathcal{P}_n\vee \mathcal{S}_{n+k}, F) \le h_\mu( \mathcal{S}_{n+k}, F) + H_\mu(\mathcal{P}_n| \mathcal{S}_{n+k})
$$
$$
\le h_\mu( \mathcal{S}_{n+k}, F)+8rn\epsilon,$$
hence 
$$ER_\mu(\mathcal{P},F)=\limsup_{n\to\infty}\frac{1}{n} h_\mu(\mathcal{P}_n, F) \le \limsup_{n\to\infty}\frac{1}{n} (h_\mu( \mathcal{S}'_{n+k}, F)+8rn\epsilon)
$$
$$
 = ER_\mu(\mathcal{S}'_k,F) + 8r\epsilon.
 $$
Letting $\epsilon$ go to 0 (or equivalently letting $k$ go to infinity) this implies that for any finite partition $\mathcal{P}$
$$ER_\mu(\mathcal{P},F) \le ER_\mu(\mathcal{S}'_k,F).$$
As was noted in Proposition \ref{finite}, $ER_\mu(\mathcal{S}'_k,F)=ER_\mu(\mathcal{S}_0,F)$ for any $k$. Since  $\mathcal{S}_0 \in \mathbf{F}(A^{\Z^2})$ one thus gets $ER_\mu(A^{\Z^2},F) =  ER_\mu(\mathcal{S}_0,F)$, which finishes the proof.
\end{proof} 
\begin{ques}
Is there exist a CA $F$, a $F$-invariant measure $\mu$ and a partition $\mathcal{P}$ such that 
$ER_\mu (\mathcal{P},F)>ER_\mu (\mathcal{S}_0,F)$? 
 \end{ques}
\begin{rem}
From the proof of the last Proposition every measure $\mu$ which satisfies the property 
$\forall \epsilon >0$ ,  $\exists k\in \N$ such that $\forall v\in \Z^2$  $H_\mu(\sigma^v(\mathcal{P})|\sigma^v(\mathcal{S}'_{k}))\le\epsilon$  for  $\mathcal{P}\in  \mathbf{F}(A^{\Z^2})$
 verifies   $ER_\mu(A^{\Z^2},F) =  ER_\mu(\mathcal{S}_0,F)$. This simple remark allows us to extend easily the 
set of probability measures $\mu$ where the entropy rate equals $ ER_\mu(\mathcal{S}_0,F)$. For instance each measure invariant for the group generated by  some 
iterations of the bi-dimensional shift also satisfies $ER_\mu(A^{\Z^2},F) =  ER_\mu(\mathcal{S}_0,F)$.

\end{rem}

The last result 
has two consequences. The first is straightforward: for a CA with radius $r$ on the alphabet $A$ and a bi-invariant measure $\mu$, the entropy rate of any finite partition is bounded by $8r\log (\# A)$, which is not as obvious as the corresponding coarse upper bound for entropy in the one-dimensional setting. The second is the following

\begin{pro}\label{invar1}
Let $(A^{\Z^2}, F, \mu)$ and  $(B^{\Z^2}, G, \nu)$ be two cellular automata 
endowed  with their 
respective bi-invariant measures. If there exists a measurable map $\varphi\colon A^{\Z^2} \to B^{\Z^2}$ such that 
\begin{enumerate}
\item{} $\varphi$ commutes with any shift,
\item{} $\varphi \circ F = G \circ \varphi$ and 
\item{} $\varphi\mu = \nu$, 
\end{enumerate} 
one has
$$ER_\mu(A^{\Z^2},F) \ge ER_\nu(B^{\Z^2},G).$$ In particular entropy rate is an invariant for the class of shift-commuting isomorphisms of CA. 
\end{pro}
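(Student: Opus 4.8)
The plan is to transport both entropy rates through the factor map $\varphi$ after reducing each of them to its coordinate partition. Write $\mathcal{R}_0$ for the clopen partition of $B^{\Z^2}$ according to the $0$th coordinate and $\mathcal{R}'_n,\ \mathcal{R}_n$ for the band and square partitions it generates as in Definition \ref{ed2}; let $\mathcal{S}_0$ be the analogous coordinate partition of $A^{\Z^2}$. Since $\nu$ is bi-invariant, Proposition \ref{main} gives $ER_\nu(B^{\Z^2},G)=ER_\nu(\mathcal{R}_0,G)$, and since $\mu$ is bi-invariant, $ER_\mu(A^{\Z^2},F)=ER_\mu(\mathcal{S}_0,F)$. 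Thus it is enough to prove $ER_\nu(\mathcal{R}_0,G)\le ER_\mu(\mathcal{S}_0,F)$. I would do this by pulling $\mathcal{R}_0$ back to the finite measurable partition $\mathcal{Q}=\varphi^{-1}(\mathcal{R}_0)$ of $A^{\Z^2}$, which has at most $\# B$ atoms.

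The key transport tool is the factor-map identity $h_\nu(\mathcal{C},G)=h_\mu(\varphi^{-1}\mathcal{C},F)$, valid for every finite partition $\mathcal{C}$ of $B^{\Z^2}$. It follows from the three hypotheses on $\varphi$: the relation $\varphi\mu=\nu$ gives $H_\mu(\varphi^{-1}(\cdot))=H_\nu(\cdot)$ on finite partitions; the intertwining $\varphi\circ F=G\circ\varphi$ gives $\varphi^{-1}(G^{-i}E)=F^{-i}(\varphi^{-1}E)$ for measurable $E$, hence $\varphi^{-1}\bigl(\bigvee_{i=0}^{N-1}G^{-i}\mathcal{C}\bigr)=\bigvee_{i=0}^{N-1}F^{-i}(\varphi^{-1}\mathcal{C})$; combining the two, dividing by $N$ and letting $N\to\infty$ yields the identity. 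Because $\varphi$ also commutes with every shift, pulling back respects the spatial structure defining the band partitions: $\varphi^{-1}(\mathcal{R}'_n)=\varphi^{-1}\bigl(\bigvee_{v\in E'_n}\sigma^v\mathcal{R}_0\bigr)=\bigvee_{v\in E'_n}\sigma^v\mathcal{Q}$, which is exactly the band partition built from $\mathcal{Q}$. Consequently $h_\nu(\mathcal{R}'_n,G)=h_\mu(\bigvee_{v\in E'_n}\sigma^v\mathcal{Q},F)$ for every $n$, and after dividing by $n$ and taking the $\limsup$ this identifies $ER_\nu(\mathcal{R}_0,G)$ with the entropy rate of the finite partition $\mathcal{Q}$ under $F$.

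It then remains to bound the entropy rate of $\mathcal{Q}$ under $F$ by $ER_\mu(\mathcal{S}_0,F)$, which is exactly Proposition \ref{main} applied to the bi-invariant measure $\mu$ and the finite partition $\mathcal{Q}$. Chaining the relations yields $ER_\nu(B^{\Z^2},G)=ER_\nu(\mathcal{R}_0,G)=ER_\mu(\mathcal{Q},F)\le ER_\mu(\mathcal{S}_0,F)=ER_\mu(A^{\Z^2},F)$, the asserted inequality. For the final statement, when $\varphi$ is a shift-commuting isomorphism its inverse $\varphi^{-1}$ satisfies the same three hypotheses with the two systems exchanged, so the reverse inequality holds as well and the entropy rates coincide; hence entropy rate is an invariant for shift-commuting isomorphisms.

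I expect the main obstacle to be not any single estimate but the simultaneous bookkeeping of the two commuting actions together with the band width. The band partition $\mathcal{R}'_n$ is governed by the radius of $G$, whereas $ER_\mu(\cdot,F)$ and the partitions $\mathcal{S}'_m$ are governed by the radius of $F$, so identifying $ER_\nu(\mathcal{R}_0,G)$ with $ER_\mu(\mathcal{Q},F)$ requires checking that this mismatch of widths is immaterial. This is where Lemma \ref{squareent}(i) enters: it lets one pass freely between square and band entropies, $h_\nu(\mathcal{R}_n,G)=h_\nu(\mathcal{R}'_n,G)$, so that the width of the band pulled back from $B^{\Z^2}$ does not affect the limiting rate; equivalently, the conditional-entropy estimate underlying Proposition \ref{main} goes through for bands of any width linear in $n$, the width entering only the error term that vanishes in the limit. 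A minor additional point is that $\varphi$ need only be defined on a set of full $\mu$-measure, which is harmless since $H_\mu$, $H_\nu$ and all entropies involved depend on partitions only modulo null sets.
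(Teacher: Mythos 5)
Your argument is correct and follows essentially the same route as the paper: pull back the coordinate partition $\mathcal{S}_0(B^{\Z^2})$ through $\varphi^{-1}$, use the three hypotheses to transport $H$, the dynamical entropies and the band partitions, and finish with Proposition \ref{main} applied to $\nu$ (and, for the isomorphism statement, run the inequality in both directions). The only point where you go beyond the paper is in explicitly flagging the mismatch between the band widths determined by the radii of $F$ and $G$, which the paper's identity $\varphi^{-1}(\mathcal{P}'_n)=(\varphi^{-1}\mathcal{P})'_n$ silently glosses over; your resolution via Lemma \ref{squareent}(i), or simply by passing to the coarser full-square partitions, which only strengthens the needed inequality, is sound.
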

\begin{proof}
The proof is an elementary application of the assumptions and of the previous results; we give it in some detail in order to show how it relies upon the various hypotheses on the isomorphism map $\phi$. 

 Lift any partition $\mathcal{P} \in \mathbf{F}(B^{\Z^2})$ into $\mathbf{F}(A^{\Z^2})$ by $\varphi^{-1}$: then\\
(1)  $H_\mu(\varphi^{-1}(\mathcal{P})) = H_\nu(\mathcal{P})$, since $\varphi\mu = \nu$; \\
(2) this, and the fact that $\varphi \circ G = F \circ \varphi$, imply that $h_\mu(\varphi^{-1}(\mathcal{P}),F) = h_\nu(\mathcal{P},G)$;\\
(3) one also has $\varphi^{-1}(\mathcal{P}'_{n})=(\varphi^{-1} (\mathcal{P}))'_{n}$ because $\varphi$ commutes with the group of shift. 

Applying (3) to $\mathcal{P}= \mathcal{S}_0(B^{\Z^2})$ and then (2), one gets for any $n>r$ 
$$h_\mu((\varphi^{-1}(\mathcal{S}_0(B^{\Z^2}))'_{n}),F) =h_\mu(\varphi^{-1}(\mathcal{S}'_{n}(B^{\Z^2})),F) 
= h_\nu(\mathcal{S}'_{n}(B^{\Z^2}),G).$$
Carried into the definitions of $ER_\mu$ and $ER_\nu$ this implies 
$$ER_\mu(\varphi^{-1}(\mathcal{S}_0(B^{\Z^2})),F)=\limsup_{n\to\infty}\frac{h_\mu ((\varphi^{-1}(\mathcal{S}_0(B^{\Z^2}))'_{n}),F)}{n}
$$
$$
=\limsup_{n\to\infty}\frac{h_\mu(\varphi^{-1}(\mathcal{S}'_{n}(B^{\Z^2})),F)}{n}=
$$
 
$$
=\limsup_{n\to\infty}\frac{h_\nu(\mathcal{S}'_{n}(B^{\Z^2}),G)}{n}
=ER_\mu(\varphi^{-1}(\mathcal{S}_0(B^{\Z^2})),F)= ER_\nu(\mathcal{S}_0(B^{\Z^2}),G);$$
$\varphi$ is a measurable map, so that $\varphi^{-1}(\mathcal{S}_0(B^{\Z^2})) \in \mathbf{F}(A^{\Z^2})$; taking this into account, we get  $ER_\mu(A^{\Z^2},F) \ge ER_\mu(\varphi^{-1}(\mathcal{S}_0(B^{\Z^2})),F)=ER_\nu(\mathcal{S}_0(B^{\Z^2}),G)$.
Since  $\nu$ is a bi-invariant measure from  Proposition \ref{main} we obtain 
$ER_\mu(A^{\Z^2},F) \ge ER_\nu(\mathcal{S}_0(B^{\Z^2}),G)=ER_\nu(B^{\Z^2},G)$.  


Finally when $\varphi$ is an isomorphism the inequality we just obtained applies in the two directions and the two entropy rate  are equal.
\end{proof}
It looks unlikely that one could obtain the same result after relaxing any of the invariance or commutation assumptions in this proposition. All of them are used somewhere.


The following result implies that if the measure is bi-invariant, the positivity of sequences of type $\left(\frac{1}{u_n}h_\mu (\mathcal{S}_{u_n},F)\right)_{n\in\N}$ is independent of the subsequence 
$u_n$ which shows that the definition of the entropy rate seems rather robust.  
\begin{pro}\label{compare-lim}
 For all two-dimensional cellular automata and bi-invariant measure $\mu$ one has:
$$
\limsup_{n\to\infty}\frac{h_\mu (\mathcal{S}_n, F)}{n}\le 8\times\liminf_{n\to\infty}\frac{h_\mu (\mathcal{S}_n, F)}{n}.
$$
\end{pro}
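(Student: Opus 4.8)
The plan is to bound the whole sequence $a_n:=h_\mu(\mathcal{S}_n,F)$ by comparing $a_m$, for large $m$, with $a_n$ at a single small, well-chosen index $n$, the comparison being \emph{linear} (not quadratic) in $m/n$. Linearity is the whole point: it is what keeps the universal constant finite, and it is bought by trading the solid square partition $\mathcal{S}_m$ for the boundary-frame partition $\mathcal{S}'_m$.

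First I would collect three ingredients. (a) By Lemma \ref{squareent}(i), $a_m=h_\mu(\mathcal{S}'_m,F)$ for $m\ge r$, so each square may be replaced by its frame. (b) Since $\mu$ is bi-invariant and $F$ commutes with every shift, $\sigma^w$ commutes with taking $F$-preimages, whence $\bigvee_{i=0}^{N-1}F^{-i}(\sigma^w\mathcal{S}_n)=\sigma^w\bigl(\bigvee_{i=0}^{N-1}F^{-i}\mathcal{S}_n\bigr)$; shift-invariance of $H_\mu$ then gives $h_\mu(\sigma^w(\mathcal{S}_n),F)=h_\mu(\mathcal{S}_n,F)$ for all $w\in\Z^2$. (c) $h_\mu(\cdot,F)$ is subadditive for $\vee$, that is $h_\mu(\mathcal{P}\vee\mathcal{Q},F)\le h_\mu(\mathcal{P},F)+h_\mu(\mathcal{Q},F)$, which follows from subadditivity of $H_\mu$ applied to the dynamical joins exactly as does its topological counterpart (\ref{entroptop}).

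The geometric core is a covering count. For $r\le n\le m$ the frame $E'_m$ is a band of width $r$ hugging the boundary of $E_m$; cutting it into its four sides (rectangles $(2m+1)\times r$ and $r\times(2m+1)$) one covers $E'_m$ by translates $E_n+w_1,\dots,E_n+w_N$ of the square $E_n$, where a single row of copies handles each side because $E_n$ already spans the width $r\le n$; this needs $N\le 4\lceil(2m+1)/(2n+1)\rceil$ copies. Consequently $\mathcal{S}'_m\curlyeqprec\bigvee_{i=1}^{N}\sigma^{w_i}(\mathcal{S}_n)$, and (c), (b), (a) yield
$$a_m=h_\mu(\mathcal{S}'_m,F)\le\sum_{i=1}^{N}h_\mu(\sigma^{w_i}(\mathcal{S}_n),F)=N\,a_n\le 4\Big\lceil\frac{2m+1}{2n+1}\Big\rceil a_n.$$

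Finally I would pass to the limits. Dividing by $m$ and letting $m\to\infty$ with $n$ fixed, $\frac1m\lceil(2m+1)/(2n+1)\rceil\to 2/(2n+1)$, so $\limsup_{m}a_m/m\le 8a_n/(2n+1)$ for every fixed $n\ge r$. Evaluating this bound along a subsequence $n_k$ realising $\liminf_n a_n/n$ and using $8n_k/(2n_k+1)\to 4$ gives $\limsup_n a_n/n\le 4\liminf_n a_n/n$, which is even sharper than, and in particular implies, the stated inequality with constant $8$. The hard part is precisely the covering estimate: covering the solid square $E_m$ by copies of $E_n$ would require $\asymp(m/n)^2$ of them and only yield $a_m/m\le(m/n)(a_n/n)$, which is worthless; passing to the frame via Lemma \ref{squareent}(i) is exactly what reduces the count to $\asymp m/n$. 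Everything else is routine bookkeeping with subadditivity, shift-invariance and the two elementary limits above.
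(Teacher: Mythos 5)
Your proof is correct and follows essentially the same route as the paper: replace $\mathcal{S}_m$ by the frame $\mathcal{S}'_m$ via Lemma \ref{squareent}(i), cover the frame by $O(m/n)$ shifted copies of $E_n$, and combine subadditivity of $h_\mu(\cdot,F)$ over joins with shift-invariance of $\mu$ before passing to the limit. Your more economical covering (one copy per length $2n+1$ of perimeter, rather than one per lattice point) even yields the sharper constant $4$ in place of the paper's $8$.
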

\begin{proof}

Roughly the proof use the fact that  a square $E'_{np}$ is a union of $8n$ squares of type $E'_p$ without its central part situated at more than 
$r$ coordinates of the near side of the big square. 
Let $(u_n)_{n\in\N}$ and $(v_n)_{n\in\N}$ two sequence of increasing positive integers such that 
$$
\liminf\frac{h_\mu (S_n,F)}{n}=\lim\frac{h_\mu (S_{u_n},F)}{u_n} \mbox{ and } 
\limsup\frac{h_\mu (S_n,F)}{n}=\lim\frac{h_\mu (S_{v_n},F)}{v_n}.
$$
Fix $p\ge r$ and $m\in\N$ such that $v_m\ge u_p$.  Putting $n=\lfloor \frac{v_m}{u_p}\rfloor$ we have 
$$(S_{u_p})'_{n+1}
=\bigvee_{i=-n}^{n}\sigma^{(n,i)}\mathcal{S}_{u_p}\bigvee_{i=-n}^{rn}\sigma^{(-n,i)}\mathcal{S}_{u_p}
\bigvee_{i=-n}^{n}\sigma^{(i,n)}\mathcal{S}_{u_p}\bigvee_{i=-n}^{n}\sigma^{(i,-n)}\mathcal{S}_{u_p}
$$
It follows  that 
$$
h_\mu \left((S_{u_p}\right)'_{n+1},F)
\le \sum_{i=-n}^{n} h_\mu (\sigma^{(n,i)}S_{u_p},F)+\sum_{i=-n}^{n} h_\mu (\sigma^{(-n,i)}S_{u_p},F) 
$$
$$
 +\sum_{i=-n}^{n} h_\mu (\sigma^{(i,n)}S_{u_p},F)+\sum_{i=-n}^{n} h_\mu (\sigma^{(i,-n)}S_{u_p},F)
$$
and using the shift invariance of $\mu$ we obtain $h_\mu ((S_{u_p})'_{n+1},F)\le 8n\cdot h_\mu (S_{u_p},F)$.
Since $(S_{u_p})'_{n+1}\succeq \mathcal{S}'_{v_m}$ we get 
$$
\frac{h_\mu (S'_{v_m},F)}{v_m}\cdot\frac{v_m}{(n+1)u_p}\le\frac{h_\mu ((S_{u_p})'_{n+1},F)}{(n+1)u_p}\le 8n\frac{h_\mu (S_{u_p},F)}{(n+1)u_p}.
$$
Letting $m\to \infty$ with $n=\lfloor \frac{v_m}{u_p}\rfloor$ and using  Lemma \ref{squareent} which say that $h_\mu (S'_n,F)=h_\mu (S_n,F)$  we obtain 
$$
\limsup\frac{h_\mu (S_n,F)}{n}=\lim\frac{h_\mu (S_{v_n},F)}{v_n}\le 8\cdot\frac{h_\mu (S_{u_p},F)}{u_p}.
$$ 
Since this last equality is true for all integer $p\ge r$ we can conclude writing 
$$
\limsup\frac{h_\mu (S_n,F)}{n}\le 8\cdot \lim_{p\to \infty}\frac{h_\mu (S_{u_p},F)}{u_p}=\liminf\frac{h_\mu (S_n,F)}{n}.
$$


\end{proof}
For all $\mu$-invariant map we have $h_\mu (T^k)=k\cdot h_\mu(T)$.
The next result show that entropy rate share this property.
\begin{pro}\label{puissances}
For all cellular automaton $F$ on $A^{\Z^2}$, $k\in\N$  and bi-invariant measure $\mu$  we have 
$ER_\mu (A^{\Z^2},F^k)=k\cdot ER_\mu((A^{\Z^2},F)$ . 
\end{pro}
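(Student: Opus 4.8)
The plan is to reduce everything to the square partitions $\mathcal{S}_n$ and then to exploit the classical ``power rule'' for the entropy of a partition. First I would observe that $\mu$, being invariant under $F$ and under the group of shifts, is bi-invariant for $F^k$ as well (it is $F^k$-invariant because it is $F$-invariant, and shift-invariance is unchanged). Hence Proposition \ref{main} applies to both $F$ and $F^k$, giving $ER_\mu(A^{\Z^2},F)=ER_\mu(\mathcal{S}_0,F)$ and $ER_\mu(A^{\Z^2},F^k)=ER_\mu(\mathcal{S}_0,F^k)$, so it suffices to prove $ER_\mu(\mathcal{S}_0,F^k)=k\cdot ER_\mu(\mathcal{S}_0,F)$. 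Moreover, applying Lemma \ref{squareent}(i) to $F$ (radius $r$) and to $F^k$ (radius $kr$) lets me replace the bands $\mathcal{S}'_n$ by the full squares $\mathcal{S}_n$, so that $ER_\mu(\mathcal{S}_0,F)=\limsup_n \frac1n h_\mu(\mathcal{S}_n,F)$ and $ER_\mu(\mathcal{S}_0,F^k)=\limsup_n \frac1n h_\mu(\mathcal{S}_n,F^k)$.

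The central ingredient is the standard identity $h_\mu\!\left(\bigvee_{i=0}^{k-1}F^{-i}\mathcal{P},F^k\right)=k\cdot h_\mu(\mathcal{P},F)$, valid for any finite partition $\mathcal{P}$ and any $F$-invariant $\mu$, which follows at once by unfolding $\bigvee_{j=0}^{m-1}(F^k)^{-j}\bigvee_{i=0}^{k-1}F^{-i}\mathcal{P}=\bigvee_{l=0}^{mk-1}F^{-l}\mathcal{P}$ inside the definition of $h_\mu(\cdot,F^k)$. I would apply this with $\mathcal{P}=\mathcal{S}_n$. Because $F$ has radius $r$, the atom of $F^{-i}\mathcal{S}_n$ is determined by the coordinates in $E_{n+ir}$, so $F^{-i}\mathcal{S}_n\curlyeqprec\mathcal{S}_{n+ir}\curlyeqprec\mathcal{S}_{n+(k-1)r}$ for $0\le i\le k-1$; together with the trivial $\mathcal{S}_n\curlyeqprec\bigvee_{i=0}^{k-1}F^{-i}\mathcal{S}_n$ this sandwiches the join as $\mathcal{S}_n\curlyeqprec\bigvee_{i=0}^{k-1}F^{-i}\mathcal{S}_n\curlyeqprec\mathcal{S}_{n+(k-1)r}$. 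Using monotonicity of $h_\mu(\cdot,F^k)$ under $\curlyeqprec$ (which follows from (\ref{entineq}), since coarsening kills the conditional-entropy term) and the power identity for the middle term, I obtain the double inequality $h_\mu(\mathcal{S}_n,F^k)\le k\cdot h_\mu(\mathcal{S}_n,F)\le h_\mu(\mathcal{S}_{n+(k-1)r},F^k)$.

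Finally I would divide by $n$ and pass to the $\limsup$. The left inequality directly gives $ER_\mu(\mathcal{S}_0,F^k)\le k\cdot ER_\mu(\mathcal{S}_0,F)$. For the reverse inequality I would rewrite $\frac1n h_\mu(\mathcal{S}_{n+(k-1)r},F^k)=\frac{n+(k-1)r}{n}\cdot\frac{1}{n+(k-1)r}h_\mu(\mathcal{S}_{n+(k-1)r},F^k)$; since the prefactor tends to $1$ and a fixed shift of the index leaves a $\limsup$ unchanged, the $\limsup$ of the right-hand side equals $ER_\mu(\mathcal{S}_0,F^k)$, whence $k\cdot ER_\mu(\mathcal{S}_0,F)\le ER_\mu(\mathcal{S}_0,F^k)$. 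Combining the two bounds yields $ER_\mu(\mathcal{S}_0,F^k)=k\cdot ER_\mu(\mathcal{S}_0,F)$, and the reduction of the first paragraph then finishes the proof. The only delicate point is this last reindexing of the $\limsup$: because the definition uses a limit superior rather than a genuine limit, one must check carefully that shifting the index by the constant $(k-1)r$ does not alter it; everything else is a routine combination of the power rule with the refinement sandwich.
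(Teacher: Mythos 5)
Your proposal is correct and follows essentially the same route as the paper: reduce to $\mathcal{S}_0$ via Proposition \ref{main}, sandwich $\bigvee_{i=0}^{k-1}F^{-i}\mathcal{S}_n$ between $\mathcal{S}_n$ and a fixed translate $\mathcal{S}_{n+O(r)}$ of the square partitions, apply the power identity $h_\mu(\bigvee_{i=0}^{k-1}F^{-i}\mathcal{S}_n,F^k)=k\,h_\mu(\mathcal{S}_n,F)$, and conclude by the index-shift invariance of the $\limsup$. The only (immaterial) difference is that you use the slightly tighter bound $\mathcal{S}_{n+(k-1)r}$ where the paper uses $\mathcal{S}_{n+kr}$.
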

\begin{proof}
From Proposition \ref{main} we only need to show that $ER_\mu (\mathcal{S}_0,F^k)=k\cdot ER_\mu (\mathcal{S}_0,F)$.
Since $F$ is a cellular automaton of radius $r$ we have $\vee_{i=0}^{k-1}F^{-i}(\mathcal{S}_0)  \curlyeqprec\mathcal{S}_{kr}$ 
and consequently   $\vee_{i=0}^{k-1}F^{-i}(\mathcal{S}_n)  \curlyeqprec \mathcal{S}_{n+kr}$.  
Hence 
$$
\limsup_{n\to\infty}\frac{h_\mu (\mathcal{S}_n,F^k)}{n}\le \limsup_{n\to\infty}\frac{h_\mu (\vee_{i=0}^{k-1}F^{-i}(\mathcal{S}_n) ,F^k)}{n}\le \limsup_{n\to\infty}\frac{h_\mu (\mathcal{S}_{n+kr},F^k)}{n}.
$$
Since   
$\limsup_{n\to\infty}\frac{h_\mu (\mathcal{S}_n,F^k)}{n}=\limsup_{n\to\infty}\frac{h_\mu (\mathcal{S}_{n+kr},F^k)}{n}=ER_\mu (\mathcal{S}_0,F^k)$ and 
since  $h_\mu (\vee_{i=0}^{k-1}F^{-i}(\mathcal{S}_n),F^k)=k\cdot h_\mu (\mathcal{S}_n,F)$  (see \cite{Wa}) we can conclude writing 
$$
ER(\mathcal{S}_0,F^k)=\limsup_{n\to\infty}\frac{h_\mu (\vee_{i=0}^{k-1}F^{-i}(\mathcal{S}_n) ,F^k)}{n}=k\cdot ER_\mu (\mathcal{S}_0,F).
$$.
\end{proof}
\begin{rem}\label{REM0}
More generally one can extend Lemma \ref{squareent}, Proposition \ref{main},  \ref{invar1} and  \ref{puissances} and obtain similar results for Proposition 
\ref{finite} and  \ref{compare-lim}  using the following definition for the $d$-dimensional 
case:
$$ER_\mu (F)=\sup\{ER_\mu(\mathcal{P},F)\vert \mathcal{P}\in F(A^{\Z^d})\}
$$
$$
 = \sup \{ \limsup_{n \to \infty} \frac{1}{n^{d-1}} h_\mu(\mathcal{P}'_n,F)\vert \mathcal{P}\in F(A^{\Z^d})\}.$$
In the $d$ dimensional case $\mathcal{P}'_n=\bigvee_{v\in  E_n^{'d}} \sigma^v \mathcal{P}$ where $E_{n}^{'d}$ is a $d$ dimensional empty hypercube of side $n$ and width $r$. 
\end{rem}

\subsection{Entropy rate with respect to the partition $\mathcal{S}_0$, an invariant for continuous and shift invariant isomorphism}\label{subsectionERS0}
$\mbox{  }$\\

 The following sequence of elementary results shows that for all $F$-invariant measure $\mu$ the entropy rate $ER_\mu (\mathcal{S}_0,F)$ share several properties (but not all) with  
$ER_\mu (A^{\Z^2},F)$  and is an invariant for  continuous and shift-invariant isomorphism. 
Note that we call cylinder any element of  a partition $\mathcal{S}_k$ ($k\in\N$).
\begin{de}\label{def-sbc}
A sliding bock code is a continuous map $\phi :A^{\Z^2}\to B^{\Z^2}$ 
 such that  any element $\sigma_A$ of the group of the shift on $A^{\Z^2}$ there 
exists $\sigma_B$ the corresponding element in the group of the shift on $B^{\Z^2}$ such that $\phi\circ \sigma_A=\sigma_B\circ\phi$.
\end{de}
 
\begin{pro}\label{ERS0}
For all bi-dimensional cellular automata $F$ and invariant measure $\mu$ one has  $ER_\mu (\mathcal{S}_0,F)=\sup_{CY(F)}\{ER_\mu (Q,F)\}$
where 
$CY(F)$ is the set of partitions by finite union of cylinders of $A^{\Z^2}$.
\end{pro}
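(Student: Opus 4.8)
The plan is to establish the two inequalities separately: the easy one is immediate because $\mathcal{S}_0$ itself lies in $CY(F)$, while the substantial one rests on the observation that any partition whose atoms are finite unions of cylinders is \emph{exactly} coarser than some square partition $\mathcal{S}_k$. First I would record the trivial direction: each atom of $\mathcal{S}_0$ is a single cylinder, hence a (one-term) finite union of cylinders, so $\mathcal{S}_0\in CY(F)$ and consequently $ER_\mu(\mathcal{S}_0,F)\le \sup_{Q\in CY(F)}ER_\mu(Q,F)$. It then remains to prove the reverse, namely $ER_\mu(Q,F)\le ER_\mu(\mathcal{S}_0,F)$ for every $Q\in CY(F)$.

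Next I would reduce each such $Q$ to a square partition. Writing $Q=\{Q_1,\dots,Q_m\}$ with every $Q_i$ a finite union of cylinders, each cylinder occurring is an atom of some $\mathcal{S}_{k_{i,j}}$; since knowing a pattern on $E_k$ determines its restriction to $E_{k'}$ one has $\mathcal{S}_{k'}\curlyeqprec \mathcal{S}_k$ whenever $k'\le k$, so taking $k=\max_{i,j}k_{i,j}$ makes each $Q_i$ a union of atoms of $\mathcal{S}_k$, i.e. $Q\curlyeqprec \mathcal{S}_k$. Because the shift maps and the join operation both preserve the refinement order, for every $n\ge r$ this yields $\sigma^v(Q)\curlyeqprec \sigma^v(\mathcal{S}_k)$ for each $v\in E'_n$ and hence $Q'_n\curlyeqprec (\mathcal{S}_k)'_n$.

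Finally I would push this comparison through the dynamical entropy. As $(\mathcal{S}_k)'_n$ refines $Q'_n$ one has $H_\mu(Q'_n\mid (\mathcal{S}_k)'_n)=0$, so inequality (\ref{entineq}) gives $h_\mu(Q'_n,F)\le h_\mu((\mathcal{S}_k)'_n,F)$; dividing by $n$ and taking $\limsup$ produces $ER_\mu(Q,F)\le ER_\mu(\mathcal{S}_k,F)$, and by Proposition \ref{finite} the right-hand side equals $ER_\mu(\mathcal{S}_0,F)$. Combining the two directions gives the claimed equality.

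The point worth stressing — and the reason this statement holds for an arbitrary $F$-invariant $\mu$, whereas Proposition \ref{main} required bi-invariance — is that a cylinder partition is \emph{exactly}, not merely approximately, coarser than some $\mathcal{S}_k$, so no shift-invariance of $\mu$ is needed to dominate $Q'_n$ by $(\mathcal{S}_k)'_n$: the refinement is a set-theoretic identity that holds before one integrates against $\mu$. I expect no serious obstacle here; the only mildly delicate verification is that the refinement order $\curlyeqprec$ is preserved under the operations $\sigma^v$ and $\bigvee_{v\in E'_n}$, which is routine, so the crux of the argument really is the reduction $Q\curlyeqprec \mathcal{S}_k$.
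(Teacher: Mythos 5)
Your proof is correct and follows essentially the same route as the paper: reduce any $Q\in CY(F)$ to $Q\curlyeqprec\mathcal{S}_k$, push the refinement through the joins and the dynamical entropy, and invoke $ER_\mu(\mathcal{S}_k,F)=ER_\mu(\mathcal{S}_0,F)$ from Lemma \ref{squareent}/Proposition \ref{finite}. You merely spell out the trivial direction and the preservation of $\curlyeqprec$ more explicitly than the paper does.
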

\begin{proof}
From the definition of $CY(F)$, for all $\mathcal{Q}\in CY(F)$ there exists an integer $k\in\N$ such that $\mathcal{Q} \curlyeqprec \mathcal{S}_k$. 
It follows that for all $n\in\N$ we have $h_\mu (\mathcal{Q}_n,F)\le h_\mu (\mathcal{S}_{k+n},F)$ which implies that 
$ER_\mu (\mathcal{Q},F)\le ER_\mu (\mathcal{S}_k,F)=ER_\mu(\mathcal{S}_0,F)$ by Lemma \ref{squareent}.
\end{proof}

\begin{rem}
By the proof of Proposition \ref{ERS0} and Lemma \ref{squareent} it is straightforward  that 
 for all bi-dimensional cellular automata $F$ one has
$$
ER_\mu (\mathcal{S}_0,F)=\sup_{CY(F)}\left\{\limsup\frac{h_\mu (\mathcal{Q}_n,F)}{n}\right\}=\sup_{CY(F)}\left\{\limsup\frac{h_\mu (\mathcal{Q}'_n,F)}{n}\right\}.
$$
\end{rem}
The next result shows that $ER_\mu (\mathcal{S}_0,F)$ is a invariant for continuous and shift commuting isomorphism for each 
$F$-invariant probability measure $\mu$.

\begin{pro}\label{invar-ERS0}
Let $(A^{\Z^2}, F, \mu)$ and  $(B^{\Z^2}, G, \nu)$ be two cellular automata 
endowed  with their 
respective invariant measures. If there exists a sliding block code $\varphi\colon A^{\Z^2} \to B^{\Z^2}$ such that 
  $H_\mu(\varphi^{-1}(\mathcal{P})) = H_\nu(\mathcal{P})$,  $\varphi\mu = \nu$ and 
 $\varphi\mu = \nu$ 
then $ER_\mu \left(\mathcal{S}_0(A^{\Z^2}),F\right)=ER_\nu \left(\mathcal{S}_0(B^{\Z^2}),G\right)$.
\end{pro}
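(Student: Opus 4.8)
The plan is to follow the skeleton of the proof of Proposition~\ref{invar1}, but since here $\mu$ and $\nu$ are only $F$- and $G$-invariant (not bi-invariant) I cannot invoke Proposition~\ref{main}; instead I work directly with the square partitions $\mathcal{S}_n$ and use Lemma~\ref{squareent} to identify $ER_\mu(\mathcal{S}_0(A^{\Z^2}),F)=\limsup_{n}\frac1n h_\mu(\mathcal{S}_n(A^{\Z^2}),F)$, and likewise for $G$. I read the hypotheses as asserting that $\varphi$ is a shift-commuting isomorphism with $\varphi\circ F=G\circ\varphi$, $\varphi\mu=\nu$, and $H_\mu(\varphi^{-1}(\mathcal{P}))=H_\nu(\mathcal{P})$ for every finite partition $\mathcal{P}$ of $B^{\Z^2}$ (this last being a consequence of $\varphi\mu=\nu$).

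First I would establish the transfer of dynamical entropy: for every $\mathcal{P}\in\mathbf{F}(B^{\Z^2})$ one has $h_\mu(\varphi^{-1}(\mathcal{P}),F)=h_\nu(\mathcal{P},G)$. Indeed, from $\varphi\circ F=G\circ\varphi$ one gets $F^{-i}(\varphi^{-1}(\mathcal{P}))=\varphi^{-1}(G^{-i}(\mathcal{P}))$, hence
$$\bigvee_{i=0}^{N-1}F^{-i}(\varphi^{-1}(\mathcal{P}))=\varphi^{-1}\Bigl(\bigvee_{i=0}^{N-1}G^{-i}(\mathcal{P})\Bigr),$$
and applying $H_\mu(\varphi^{-1}(\cdot))=H_\nu(\cdot)$ to $\bigvee_{i=0}^{N-1}G^{-i}(\mathcal{P})$ makes the two $\frac1N H$-averages coincide; passing to the limit gives the claimed equality of partition entropies.

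Next I would feed the square partitions into this identity. Because $\varphi$ is a sliding block code it has a finite radius $s$ (Curtis--Hedlund--Lyndon), so the pattern of $\varphi(x)$ on $E_n$ is determined by that of $x$ on $E_{n+s}$; equivalently $\varphi^{-1}(\mathcal{S}_n(B^{\Z^2}))\curlyeqprec\mathcal{S}_{n+s}(A^{\Z^2})$. Combining monotonicity of $h_\mu(\cdot,F)$ under refinement with the transfer identity applied to $\mathcal{P}=\mathcal{S}_n(B^{\Z^2})$,
$$h_\nu(\mathcal{S}_n(B^{\Z^2}),G)=h_\mu\bigl(\varphi^{-1}(\mathcal{S}_n(B^{\Z^2})),F\bigr)\le h_\mu(\mathcal{S}_{n+s}(A^{\Z^2}),F).$$
Dividing by $n$, taking $\limsup$, and using $\limsup_n\frac1n h_\mu(\mathcal{S}_{n+s},F)=ER_\mu(\mathcal{S}_0(A^{\Z^2}),F)$ (Lemma~\ref{squareent}(ii)) yields $ER_\nu(\mathcal{S}_0(B^{\Z^2}),G)\le ER_\mu(\mathcal{S}_0(A^{\Z^2}),F)$.

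Finally I would obtain the reverse inequality by applying the identical argument to $\varphi^{-1}\colon B^{\Z^2}\to A^{\Z^2}$, which is again a sliding block code (the inverse of a shift-commuting homeomorphism of compact spaces is continuous and shift-commuting) and satisfies $\varphi^{-1}\circ G=F\circ\varphi^{-1}$, $\varphi^{-1}\nu=\mu$, and $H_\nu(\varphi(\mathcal{P}))=H_\mu(\mathcal{P})$; this gives $ER_\mu(\mathcal{S}_0(A^{\Z^2}),F)\le ER_\nu(\mathcal{S}_0(B^{\Z^2}),G)$, whence equality. The only delicate point is the one just handled implicitly: the definition of the entropy rate is phrased through the outer bands $\mathcal{P}'_n$ rather than the full squares $\mathcal{P}_n$, so I must route everything through the partitions $\mathcal{S}_n$ and use Lemma~\ref{squareent}(i) to pass between $\mathcal{S}'_n$ and $\mathcal{S}_n$. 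An equivalent route is to note that $\varphi^{-1}(\mathcal{S}_0(B^{\Z^2}))\in CY(F)$ and to quote the Remark following Proposition~\ref{ERS0}, which records precisely that $ER_\mu(\mathcal{S}_0,F)=\sup_{CY(F)}\limsup_n\frac1n h_\mu(\mathcal{Q}_n,F)$.
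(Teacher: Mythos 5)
Your proof is correct and follows essentially the same route as the paper: transfer the dynamical entropy through the conjugacy, then use the finite radius of the sliding block code (Curtis--Hedlund--Lyndon) to dominate the lifted partition by a square partition, and conclude via Lemma \ref{squareent} and the bijectivity of $\varphi$ for the reverse inequality. Your explicit refinement $\varphi^{-1}(\mathcal{S}_n(B^{\Z^2}))\curlyeqprec\mathcal{S}_{n+s}(A^{\Z^2})$ is exactly the mechanism behind the paper's appeal to $\varphi^{-1}(\mathcal{S}_0)\in CY(F)$ and Proposition \ref{ERS0}, a route you also identify.
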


\begin{proof}
With these assumptions we can follow exactly the  proof of Proposition \ref{invar1} until the argument that $\varphi$ is a measurable map 
 (line 14 of the proof ) and substitute it by  
 $\varphi$ is a continuous and shift-invariant isomorphism or a one to one and onto sliding block code  from $A^{\Z^2}$ to $B^{\Z^2}$. 
Using simple compactness arguments (see the Curtis Hedlund Lindon Theorem \cite{Hedlund}) we can show that  $\varphi^{-1}(\mathcal{S}_0)\subset CY(F))$ which by 
  Proposition \ref{ERS0} implies that  
$$
ER_\mu \left(\mathcal{S}_0(B^{\Z^2}),F\right)\ge ER_\mu \left(\varphi^{-1}(\mathcal{S}_0)(B^{\Z^2}),F\right)=ER_\nu\left(\mathcal{S}_0(B^{\Z^2}),G\right).
$$
Finally since $\varphi$ is am isomorphism the inequality we obtained applies in the two directions and the two entropies rate are equal.
\end{proof}
Note that Proposition \ref{puissances} is clearly true  for $ER_\mu (\mathcal{S}_0,F)$  but Proposition \ref{compare-lim} that gives more meaning to the definition of the entropy rate using a limsup requires the shift invariance of the measure $\mu$.
If we compare $ER_\mu (A^{\Z^2},F)$ with $ER_\mu (\mathcal{S}_0,F)$   we can say that  $ER_\mu (A^{\Z^2},F)$  
 is significant when   $ER_\mu (A^{\Z^2},F)=ER_\mu (\mathcal{S}_0,F)$ which is mainly for shift-invariant measure and  $ER_\mu (\mathcal{S}_0,F)$ is only an invariant for 
continuous isomorphism.

\section{An upper bound for the entropy rate}\label{major}

The following basic result for one dimensional CA is similar to several inequalities  (that involved 
discrete Lyapunov exponents in \cite{ti2000} and \cite{ti2009})  in the ergodic setting  but is not written anywhere. 
\begin{pro}\label{major1d}
When $F$ is a one-dimensional cellular automaton of radius $r$ and  $\mu$ is a bi-invariant measure one has the  inequality     $h_\mu(A^\Z,F) \le 2r\cdot h_\mu(A^\Z,\sigma).$
\end{pro}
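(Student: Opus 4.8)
The plan is to bound $h_\mu(\mathcal{S}_n,F)$ from above by $2r\cdot h_\mu(A^\Z,\sigma)$ \emph{uniformly in} $n$, and then to let $n\to\infty$. Since the square partitions $\mathcal{S}_n$ generate the Borel $\sigma$-algebra of $A^\Z$, the one-dimensional analogue of the identity recalled in the excerpt gives $h_\mu(A^\Z,F)=\lim_{n\to\infty}h_\mu(\mathcal{S}_n,F)$, so the uniform bound will suffice. Likewise $h_\mu(A^\Z,\sigma)=h_\mu(\mathcal{S}_0,\sigma)$ because $\mathcal{S}_0$ generates under $\sigma$.

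The key step is a light-cone estimate coming from the radius $r$. Coordinate $0$ of $F^i(x)$ is determined by the coordinates of $x$ lying in $[-ir,ir]$, and $F$ commutes with $\sigma$, so coordinate $j$ of $F^i(x)$ is determined by $x$ on $[j-ir,j+ir]$. Reading off the pattern of $F^i(x)$ on $[-n,n]$ therefore uses only $x$ on $[-(n+ir),\,n+ir]$, i.e. $F^{-i}(\mathcal{S}_n)\curlyeqprec \mathcal{S}_{n+ir}$ for every $i$. Taking the join over $0\le i\le N-1$, and using that a join of partitions each coarser than a common partition $\mathcal{S}_{n+(N-1)r}$ is again coarser than it, I get
$$\bigvee_{i=0}^{N-1}F^{-i}(\mathcal{S}_n)\curlyeqprec \mathcal{S}_{n+(N-1)r}.$$
By monotonicity of $H_\mu$ under refinement this yields $H_\mu\bigl(\bigvee_{i=0}^{N-1}F^{-i}(\mathcal{S}_n)\bigr)\le H_\mu(\mathcal{S}_{n+(N-1)r})$.

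Next I would express the right-hand side through the shift. By shift-invariance of $\mu$ one has $H_\mu(\mathcal{S}_m)=H_\mu\bigl(\bigvee_{j=0}^{2m}\sigma^{-j}(\mathcal{S}_0)\bigr)$, and because $\mathcal{S}_0$ generates under $\sigma$, $\frac{1}{2m+1}H_\mu(\mathcal{S}_m)\to h_\mu(A^\Z,\sigma)$ as $m\to\infty$. Writing $M_N=2\bigl(n+(N-1)r\bigr)+1$ and factoring
$$\frac1N H_\mu(\mathcal{S}_{n+(N-1)r})=\frac{M_N}{N}\cdot\frac{1}{M_N}H_\mu(\mathcal{S}_{n+(N-1)r}),$$
the first factor tends to $2r$ and the second to $h_\mu(A^\Z,\sigma)$. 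Combining this with the estimate of the previous paragraph,
$$h_\mu(\mathcal{S}_n,F)=\lim_{N\to\infty}\frac1N H_\mu\Bigl(\bigvee_{i=0}^{N-1}F^{-i}(\mathcal{S}_n)\Bigr)\le \lim_{N\to\infty}\frac1N H_\mu(\mathcal{S}_{n+(N-1)r})=2r\cdot h_\mu(A^\Z,\sigma).$$
This bound does not depend on $n$, so letting $n\to\infty$ gives $h_\mu(A^\Z,F)\le 2r\cdot h_\mu(A^\Z,\sigma)$.

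I expect no genuine obstacle here; the only point requiring care is that the inner estimate is uniform in $n$, which is exactly what legitimizes the final passage $n\to\infty$ (there is no interchange of limits to justify beyond this uniformity). The substantive content is the radius-$r$ refinement relation $\bigvee_{i=0}^{N-1}F^{-i}(\mathcal{S}_n)\curlyeqprec \mathcal{S}_{n+(N-1)r}$, together with the elementary asymptotics $M_N/N\to 2r$; both the $F$-invariance (to pass to the entropy limit) and the $\sigma$-invariance (to identify the shift entropy) of $\mu$ are used, which is why the hypothesis that $\mu$ is bi-invariant cannot be dropped.
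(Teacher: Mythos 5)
Your proof is correct and follows essentially the same route as the paper's: both rest on the light-cone refinement $\bigvee_{i=0}^{N-1}F^{-i}(\mathcal{S}_n)\curlyeqprec \mathcal{S}_{n+(N-1)r}$, the identification $\frac{1}{2m+1}H_\mu(\mathcal{S}_m)\to h_\mu(A^\Z,\sigma)$ via shift-invariance, the asymptotics $M_N/N\to 2r$, and a final passage $n\to\infty$ using the generating property and $F$-invariance. The paper phrases this with partitions $\alpha_p=\bigvee_{i=-p}^{p}\sigma^{-i}(\alpha_0)$ and normalizes by $2r(n-1)+1$ instead of $M_N$, but these are the same objects and the same argument.
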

\begin{proof}
Let  $\alpha_0$ be the 
partition of $A^\Z$ by the central coordinate and $\alpha_p=\vee_{i=-n}^{n}\sigma^{-i} (\alpha_0)$.
Using the fact that $\mu$ is a $F$-invariant measure and $\lim_{p\to+\infty}\alpha_p$ is  the whole Borel $\sigma$-algebra $\mathcal{B}$  we obtain:
$$
h_\mu (F)=\lim_{p\to+\infty}h_\mu (F,\alpha_p)=\lim_{p\to+\infty}\lim_{n\to+\infty}\frac{H_\mu (\vee_{i=0}^{n-1}F^{-i}\alpha_p)}{n}
$$ 
Using the definition of a one-dimensional CA of radius $r$, for all $ p\in\N$  we can state that   $ \vee_{i=0}^{n-1}F^{-i}\alpha_p\curlyeqprec \vee_{i=-r(n-1)}^{r(n-1)}\sigma^{i}(\alpha_p) $ which 
implies that:
$$
h_\mu (F)=\lim_{p\to+\infty}h_\mu (F,\alpha_p)\le \lim_{p\to+\infty}\lim_{n\to+\infty}\frac{H_\mu \left( \vee_{i=-r(n-1)}^{r(n-1)}\sigma^{i}(\alpha_p) \right)}{n}.
$$  
 It follows that
$$
h_\mu (F)=\lim_{p\to+\infty}h_\mu (F,\alpha_p)\le  \lim_{p\to+\infty}\lim_{n\to+\infty}\frac{H_\mu \left( \vee_{i=-r(n-1)}^{r(n-1)}\sigma^{i}(\alpha_p) \right)}{2rn+1-2r}\cdot\frac{2rn+1-2r}{n}.
$$
Since $(\alpha_p)_{p\in\N}$  is a generating sequence for the transformation $\sigma$ and $\mu$ a shift-invariant measure we can state that for all $p\in\N$  
$$
h_\mu (\sigma)=h_\mu (\sigma ,\alpha_p )=\lim_{n\to+\infty}\frac{H_\mu \left( \vee_{i=-r(n-1)}^{r(n-1)}\sigma^{i}(\alpha_p) \right)}{2rn+1-2r}
$$ 
 which  allows us to  conclude.
\end{proof} 

The next results can be seen as a two-dimensional analogue of this
inequality, but its proof is not as simple. It is also a refinement of the coarse upper bound  in Proposition \ref{finite}.

The entropy $h_\mu(A^{\Z^2},\sigma)$ of the two-dimensional group of shifts for an invariant measure $\mu$ was introduced in \cite{Weiss}. As a function of  the partitions $\mathcal{S}_n$ one may write it as:
\begin{equation}\label{eq0}
h_\mu(A^{\Z^2},\sigma)=\lim_{n\to\infty}\frac{H_\mu \left(\vee_{v\in E_n}\sigma^v(\mathcal{S}_0)\right)}{(2n+1)^2} = \lim_{n\to\infty}\frac{H_\mu (\mathcal{S}_n)}{(2n+1)^2}.
\end{equation}

\begin{thm}\label{tmajor}
Let $F$ be a two-dimensional cellular automaton. 
If  $\mu$ is a bi-invariant measure on $A^{\Z^2}$ 
 one has:
$$
ER_\mu(A^{\Z^2},F)\le  8r \times h_\mu(A^{\Z^2},\sigma).
$$  
\end{thm}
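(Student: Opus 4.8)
By Proposition \ref{main} it suffices to bound $ER_\mu(\mathcal{S}_0,F)$, and by Lemma \ref{squareent}(i) we may work with $h_\mu(\mathcal{S}'_n,F)$ rather than $h_\mu(\mathcal{S}_n,F)$. The plan is to control $h_\mu(\mathcal{S}'_n,F)$ by a quantity that, after dividing by $n$ and letting $n\to\infty$, converges to $8r\cdot h_\mu(A^{\Z^2},\sigma)$. The starting point is that $h_\mu(\mathcal{S}'_n,F)\le H_\mu(\mathcal{S}'_n)$, so I would first try to show that $\frac1n H_\mu(\mathcal{S}'_n)$ tends to $8r\cdot h_\mu(A^{\Z^2},\sigma)$. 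The set $E'_n$ is the outer band of width $r$ of the square of side $2n+1$; its cardinality is asymptotically $8rn$, and the key geometric fact is that this band is, up to boundary corrections that are negligible after dividing by $n$, a union of roughly $8n$ translated copies of an $r\times(2r+1)$ block (the four sides of the frame). So the main quantitative input should be that $H_\mu(\mathcal{S}'_n)$ is asymptotically comparable to $8rn$ times the per-site entropy density of $\mu$, which is exactly $h_\mu(A^{\Z^2},\sigma)$ as given by (\ref{eq0}).

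More carefully, I would use the entropy-density formula (\ref{eq0}): since $\mu$ is shift-invariant, for any Følner-type exhaustion the normalized entropy $H_\mu(\bigvee_{v\in C}\sigma^v\mathcal{S}_0)/\#C$ converges to $h_\mu(A^{\Z^2},\sigma)$ as the boundary-to-volume ratio of $C$ tends to $0$. The obstacle is that $E'_n$ is a \emph{thin} band of fixed width $r$, so it is \emph{not} a Følner sequence: its boundary is comparable to its volume, and the entropy density over such a degenerate region need not equal $h_\mu(A^{\Z^2},\sigma)$. Thus I cannot simply invoke (\ref{eq0}) for $E'_n$. The correct route is to bound $H_\mu(\mathcal{S}'_n)$ from above by the sum of the entropies of the four constituent strips and then to compare each strip with a genuinely Følner region. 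Concretely, using sub-additivity (\ref{entrop}(ii)), $H_\mu(\mathcal{S}'_n)\le \sum H_\mu(\text{strip})$, and each of the four strips is a $r\times(2n+1)$ (or $(2n+1)\times r$) rectangle; since such a rectangle \emph{does} become Følner as $n\to\infty$, its entropy satisfies $H_\mu(\text{strip})/\big(r(2n+1)\big)\to h_\mu(A^{\Z^2},\sigma)$. Summing the four strips gives $H_\mu(\mathcal{S}'_n)\lesssim 4\cdot r(2n+1)\cdot h_\mu(A^{\Z^2},\sigma)$, i.e. asymptotically $8rn\cdot h_\mu(A^{\Z^2},\sigma)$, which is precisely the bound sought.

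Assembling these, I would write
$$
ER_\mu(\mathcal{S}_0,F)=\limsup_{n\to\infty}\frac{h_\mu(\mathcal{S}'_n,F)}{n}\le\limsup_{n\to\infty}\frac{H_\mu(\mathcal{S}'_n)}{n}\le 8r\cdot h_\mu(A^{\Z^2},\sigma),
$$
and conclude by Proposition \ref{main} that $ER_\mu(A^{\Z^2},F)\le 8r\cdot h_\mu(A^{\Z^2},\sigma)$. I expect the main obstacle to be exactly the point flagged above: justifying that the entropy density over the thin frame $E'_n$ (which is not Følner) is controlled by $h_\mu(A^{\Z^2},\sigma)$. The clean way around it is the decomposition into four rectangular strips of width $r$ and length $\sim 2n$, each of which \emph{is} asymptotically Følner, together with sub-additivity of $H_\mu$; the error terms (overlaps at the four corners, the discrepancy between $2n+1$ and $2n$) are $O(1)$ per strip and hence vanish after division by $n$. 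A subtlety worth checking is whether one should bound $h_\mu(\mathcal{S}'_n,F)$ directly rather than through the static entropy $H_\mu(\mathcal{S}'_n)$; since $h_\mu(\mathcal{P},F)\le H_\mu(\mathcal{P})$ always holds, passing through $H_\mu$ is legitimate and loses nothing in the limsup, so the static estimate suffices.
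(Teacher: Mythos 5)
There is a genuine gap, and it is exactly at the point you flagged and then dismissed. A rectangle of \emph{fixed} width $r$ and length $2n+1$ is \emph{not} asymptotically F{\o}lner in $\Z^2$: its boundary has size of order $n$ and its volume of order $rn$, so the boundary-to-volume ratio stays bounded away from $0$. Consequently the claim that $H_\mu(\hbox{strip})/\bigl(r(2n+1)\bigr)\to h_\mu(A^{\Z^2},\sigma)$ is false; the normalized entropy over thin strips is only bounded \emph{below} by $h_\mu(A^{\Z^2},\sigma)$ and can strictly exceed it. Concretely, let $\mu$ be the law of a configuration $x_{(i,j)}=y_i$ where $(y_i)_{i\in\Z}$ is i.i.d.\ uniform on $A$. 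Then $h_\mu(A^{\Z^2},\sigma)=\lim_n H_\mu(\mathcal{S}_n)/(2n+1)^2=\lim_n (2n+1)\log(\#A)/(2n+1)^2=0$, yet $H_\mu(\mathcal{S}'_n)=(2n+1)\log(\#A)$, so $\frac1n H_\mu(\mathcal{S}'_n)\to 2\log(\#A)>0$. Your chain of inequalities would then give $ER_\mu(\mathcal{S}_0,F)\le 2\log(\#A)$ but not $\le 8r\cdot h_\mu(A^{\Z^2},\sigma)=0$, so the static bound $h_\mu(\mathcal{S}'_n,F)\le H_\mu(\mathcal{S}'_n)$ throws away too much: the theorem cannot be proved without using the dynamics of $F$.

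The paper's proof supplies precisely the missing mechanism. It first shows $h_\mu(\mathcal{S}'_n,F)=\lim_p\frac1p H_\mu\bigl(\bigvee_{i=0}^{p-1}F^{-i}(\mathcal{S}'_n)\,\vert\,\mathcal{S}_{n-r}\bigr)$, and by subadditivity of this sequence bounds it by the term with $p=\lfloor\sqrt n\rfloor$. Because $F$ has radius $r$, the first $\lfloor\sqrt n\rfloor$ iterates on the frame $E'_n$ are determined by the coordinates in a band of width of order $r\sqrt n$ around $E_n$. That band \emph{does} have growing width, so it can be covered by roughly $8\sqrt n/r$ translates of a square of side of order $r\sqrt n$, and for such growing squares formula (\ref{eq0}) applies, giving each a static entropy of order $(r\sqrt n)^2\,h_\mu(A^{\Z^2},\sigma)$. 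Dividing by $n\lfloor\sqrt n\rfloor$ yields the constant $8r$. In short: the time direction is traded for a spatial thickening of the frame, which is what converts the non-F{\o}lner band into F{\o}lner squares. Your outline has the right normalization and the right target constant, but without this thickening step (or some substitute that genuinely uses that $\mu$ is $F$-invariant and that $F$ has radius $r$) the argument does not go through.
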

\begin{proof}
First we claim that for any  $n\ge r$ the quantity $\frac{1}{p}H_\mu (\vee_{i=0}^{p-1}F^{-i}(\mathcal{S}'_n)\vert \mathcal{S}_{n-r})$ tends to a limit as $p \to \infty$ and that
\begin{equation}\label{condit}
\lim_{p\to\infty}\frac{H_\mu (\vee_{i=0}^{p-1}F^{-i}(\mathcal{S}'_n)\vert \mathcal{S}_{n-r})}{p}=h_\mu (\mathcal{S}'_n,F).
\end{equation}
Indeed since the equality  
$$
H_\mu (\mathcal{Q}\vert \mathcal{P})=H_\mu (\mathcal{P}\vee \mathcal{Q})-H_\mu (\mathcal{P})
$$
holds for all finite partitions $\mathcal{P}$,  $\mathcal{Q}$, one has
$$
H_\mu \left(\bigvee_{i=0}^{p-1}F^{-i}(\mathcal{S}'_n)\vert \mathcal{S}_{n-r}\right)=
H_\mu \left(\bigvee_{i=0}^{p-1}F^{-i}(\mathcal{S}'_n)\vee \mathcal{S}_{n-r}\right)-H_\mu (\mathcal{S}_{n-r}),
$$
which, taking into account the fact that  $\bigvee_{i=0}^{p-1}F^{-i}(\mathcal{S}'_n) \curlyeqprec \bigvee_{i=0}^{p-1}F^{-i}(\mathcal{S}'_n)\vee \mathcal{S}_{n-r} \curlyeqprec \bigvee_{i=0}^{p-1}F^{-i}(\mathcal{S}_n)$ and dividing by $p$, 
yields
$$
\frac{1}{p} (H_\mu(\bigvee_{i=0}^{p-1}F^{-i}(\mathcal{S}'_n))-H_\mu (\mathcal{S}_{n-r})) \le \frac{1}{p} H_\mu (\bigvee_{i=0}^{p-1}F^{-i}(\mathcal{S}'_n)\vert \mathcal{S}_{n-r})
$$
$$
\le\frac{1}{p} (H_\mu (\bigvee_{i=0}^{p-1}F^{-i}(\mathcal{S}_n))-H_\mu (\mathcal{S}_{n-r})).
$$
Passing to the limit as $p \to \infty$, the term $\frac{1}{p}H_\mu (\mathcal{S}_{n-r})$ vanishes, and the lower bound and the 
upper bound  converge to $h_\mu (\mathcal{S}'_n,F)$ and 
$h_\mu (\mathcal{S}_n,F)$ respectively. Those two quantities are equal by Lemma \ref{squareent}. One thus gets
$$
\lim_{p\to\infty}\frac{H_\mu (\vee_{i=0}^{p-1}F^{-i}(\mathcal{S}'_n)\vert \mathcal{S}_{n-r})}{p}=
h_\mu (\mathcal{S}_n,F)=h_\mu (\mathcal{S}'_n,F).
$$
 Equation \ref{condit} is proven. 
 
Applying (\ref{entrop}(i)) iteratively we note that 
$\left(H_\mu (\vee_{i=0}^{p-1}F^{-i}\mathcal{S}'_n\vert \mathcal{S}_{n-r})\right)_{p\in\N}$ is a subadditive sequence 
and it follows that   
  $$
\frac{h_\mu (\mathcal{S}'_n,F)}{n}=\lim_{p\to\infty}\frac{H_\mu (\vee_{i=0}^{p-1}F^{-i}\mathcal{S}'_n\vert \mathcal{S}_{n-r})}{pn}\le 
\frac{H_\mu (\vee_{i=0}^{\lfloor \sqrt{n}\rfloor-1}F^{-i}\mathcal{S}'_n\vert\mathcal{S}_{n-r})}{n\lfloor \sqrt{n}\rfloor}.
$$ 
 
 
where $\lfloor x\rfloor$ is the integer part of $x\in \R$. 
This implies that 
\begin{equation}\label{eq1}
ER_\mu(A^{\Z^2},F)\le\limsup_{n\to\infty}\frac{H_\mu
(\vee_{i=0}^{\lfloor \sqrt{n}\rfloor-1}F^{-i}(\mathcal{S}'_n)\vert\mathcal{S}_{n-r})}{n\lfloor \sqrt{n}\rfloor}.
\end{equation}

Observe that as $F$ is a cellular automaton of radius $r$, what happens from time 0 to time $\lfloor \sqrt{n}\rfloor-1$ in the square band $E'_n$ is completely determined by the coordinates in the square band $D_n=E'_{n+r\lfloor \sqrt{n}\rfloor}\setminus E'_{n-r\lfloor \sqrt{n}\rfloor}$ at time 0; in other words   
$$
\bigvee_{i=0}^{\lfloor \sqrt{n}\rfloor-1} F^{-i}(\mathcal{S}'_n)\curlyeqprec\bigvee_{v\in D_n}\sigma^v (\mathcal{S}_0).
$$
Recall that $\mathcal{S}_0$ is the partition according to the value of the $(0,0)$ coordinate.
 
The last inequation implies that 
$$
H_\mu \left(\vee_{i=0}^{\lfloor \sqrt{n}\rfloor-1} F^{-i}(\mathcal{S}'_n)\vert \mathcal{S}_{n-r}\right)
\le H_\mu \left(\vee_{v\in D_n}\sigma^v\mathcal{S}_0)\vert \mathcal{S}_{n-r}\right).
$$
Put $G_n=E_{n+r\lfloor \sqrt{n}\rfloor}\setminus E_{n-r}$, then $D_n = G_n \cup (E_{n-r} \setminus E_{n-r\lfloor \sqrt{n}\rfloor})$; injecting this into the latter entropy inequality one gets 
$$
H_\mu \left(\vee_{i=0}^{\lfloor \sqrt{n}\rfloor-1} F^{-i}(\mathcal{S}'_n)\vert \mathcal{S}_{n-r}\right)
\le H_\mu \left(\vee_{v\in G_n}\sigma^v\mathcal{S}_0\vert \mathcal{S}_{n-r}\right) +0,$$ hence obviously 
\begin{equation}\label{eq2}
H_\mu \left(\vee_{i=0}^{\lfloor \sqrt{n}\rfloor-1} F^{-i}(\mathcal{S}'_n)\vert \mathcal{S}_{n-r}\right)
\le H_\mu \left(\vee_{v\in G_n}\sigma^v\mathcal{S}_0\right).
\end{equation}

In order to obtain a convenient upper bound for the right-hand term in the last inequality, note that as a square band the set $G_n$ is the union of 4 rectangles of length $2(n+r\lfloor \sqrt{n}\rfloor)+1$ and of width $r(\lfloor \sqrt{n}\rfloor+1)$. 

Let $\chi(n) \in \{0,1\}$ and $k(n) \in \N$ be such that $r(\lfloor \sqrt{n}\rfloor+1)+\chi(n)$ is odd (when $r$ is even $\chi(n)=1$, but when $r$ is odd $\chi(n)$ varies with $n$) and $k(n) = \frac{1}{2}(r(\lfloor \sqrt{n}\rfloor+1)+\chi(n)-1)$.
Each of the four rectangles above is covered (not disjointly!) by at most
$\lfloor \frac{2n+1+r\lfloor\sqrt{n}\rfloor}{r(\lfloor\sqrt{n}\rfloor+1)+\chi(n)}\rfloor+1$ squares of size $r(\lfloor\sqrt{n}\rfloor+1)+\chi(n)$, each one of them a translate of the square $E_{k(n)}$.
A consequence is that the partition $\bigvee_{v\in G_n}\sigma^v\mathcal{S}_0$ is coarser than the supremum of the partitions generated by all coordinates belonging to at least one of  those squares of size $r(\lfloor\sqrt{n}\rfloor+1)+\chi(n)$. 

Since $\mu$ is preserved under the group of  shifts, for every $v \in \Z^2$ one has $H_\mu(\sigma^v(\mathcal{S}_{k(n)})$ 
$ = H_\mu(\mathcal{S}_{k(n)})$.
The inequality 
$$
H_\mu \left(\bigvee_{v\in G_n}\sigma^v\mathcal{S}_0\right)\le  4\left(\lfloor \frac{2n+1+r\lfloor\sqrt{n}\rfloor}{r(\lfloor\sqrt{n}\rfloor+1)+\chi(n)}\rfloor +1\right) \times H_\mu (\mathcal{S}_{k(n)}) 
$$
immediately follows, hence by a straightforward computation 
$$
\frac{1}{n\lfloor\sqrt{n}\rfloor}H_\mu \left(\bigvee_{v\in G_n}\sigma^v\mathcal{S}_0\right)\le \left(
\frac{8}{r(\lfloor \sqrt{n}\rfloor)^2}+\frac{1}{n}\ O(n)\right)\times H_\mu (\mathcal{S}_{k(n)})
$$
which, passing to the lim sup, implies that
\begin{equation}\label{eq3}
\limsup_{n\to\infty}\frac{1}{n\lfloor\sqrt{n}\rfloor}H_\mu \left(\bigvee_{v\in G_n}\sigma^v\mathcal{S}_0\right)\le 
\limsup_{n\to\infty}\left(
\frac{8}{rn}\right)\times H_\mu (\mathcal{S}_{k(n)}).
\end{equation}
By (\ref{eq0}), since $k(n) \to \infty$ as $n \to \infty$,
$$
h_\mu (A^{\Z^2},\sigma )=\lim_{n\to\infty}\frac{H_\mu (\mathcal{S}_{k(n)})}{(2k(n) +1)^2} ;
$$
replacing $k(n)$ by its value and carrying the latter inequality into (\ref{eq3}) one gets
$$
\limsup_{n\to\infty}\frac{1}{n\lfloor\sqrt{n}\rfloor}H_\mu \left(\bigvee_{v\in G_n}\sigma^v\mathcal{S}_0\right)\le 
\limsup_{n\to\infty}
\frac{8}{rn}(r(\lfloor \sqrt{n}\rfloor+1)+\chi(n))^2 \frac{H_\mu (\mathcal{S}_{k(n)})}{(2k(n)+1)^2}
$$
$$
= 8r\times h_\mu (A^{\Z^2},\sigma).
$$
The desired inequality follows by (\ref{eq1}) and (\ref{eq2}):
$$
ER_\mu(A^{\Z^2},F)\le 8r\times h_\mu (A^{\Z^2},\sigma).
$$
\end{proof}

\begin{rem}\label{REM1}
Using the more general definition for a $d$-dimensional CA given in Remark \ref{REM0}
 one can easily extend the last proof  (using more complex notations)
 and  show that 
$$
ER_\mu ( A^{\Z^d},F)\le (\partial S_r )\times h_\mu (A^{\Z^d},\sigma )
$$
where $\partial S_r$ is the surface of the hypercube of side $2r+1$. 
\end{rem} 
\section{Permutative CA}\label{permu}

Computing the entropy $h_\mu (F)$ of a cellular automaton is a difficult work in general. 
For some examples it is possible to show that $h_\mu (F)=0$ (for instance see \cite{ti2009} or \cite{XT}).
Another exception is the permutative and additive case where (see  \cite{damico}) exact computation is much easier (tractable). 
In the multidimensional case, the entropy of permutative CA is not finite \cite{damico}.
In this section we show how to compute the exact value of their 
entropy rate in the two-dimensional case. 

Recall that patterns and the concatenation $P \bullet P'$ of disjoint patterns $P$ and $P'$ are defined in Subsection \ref{defCA}.

\begin{de}\label{dpermu}
Let $F$ be a two-dimensional cellular automaton of radius $r$, let
$f:A^{(2r+1)^2}\to A$ be the block map defining $F$. Choose a position $(i,j)\in E_r$.
The cellular automaton $F$ is called permutative at $(i,j)$ if
for any $a\in A$, any given pattern $P$ on $E_r \setminus \{(i,j)\}$
there exists $b\in A$ such that $f(P\bullet ( \{(i,j)\} \to b))=a$.
\end{de}
In other words, $F$ is permutative at $(i,j)$ if, given a pattern $P$ on $E_r \setminus \{(i,j)\}$ and some letter $a$, one can choose a letter $b$ such that $a$ is the output of $f$ for $P$ completed by $b$ at $(i,j)$.


Let $\mu_\lambda$ be the uniform measure on $A^{\Z^2}$.
By definition  for all  finite set $E\in\Z^2$ 
and element $a\in \mathcal{S}_0$ 
one has  $\mu_\lambda (\bigcap_{v\in E}\sigma^{-v}a)=A^{-(\#E)}$. Since $\# \mathcal{S}_0=\#A$ and 
each element of $(\bigvee_{v\in E}\sigma^{-v}\mathcal{S}_0)$ have the same measure we have  
(see \cite{Wa})
\begin{equation}\label{m-uniforme}
H_{\mu_\lambda}(\bigvee_{v\in E}\sigma^{-v}\mathcal{S}_0)=\#E\cdot\log (\# A).
\end{equation}
 
Denote by $Pt=\{p_1,p_2,p_3,p_4\}$ the set of points situated at the centers of the four sides of the square 
$E_r$ ($p_1=(0,r)$, 
$p_2=(0,-r)$, $p_3=(-r,0)$ and $p_4=(r,0)$ ).
\begin{pro}\label{permu1}
If a cellular automata $F$ of radius $r$ is permutative at all the points in $Pt$  the entropy of $F$ with 
respect to the uniform measure  $\mu_\lambda$ is 
$$
ER_{\mu_\lambda} (A^{\Z^2},F)= 8\times r\times \log( \#A).
$$ 
\end{pro}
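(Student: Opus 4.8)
The plan is to sandwich $ER_{\mu_\lambda}(A^{\Z^2},F)$ between $8r\log(\#A)$ from above and below, with the upper bound essentially free and the lower bound carrying all the work. First I would record that $\mu_\lambda$ is bi-invariant: it is shift-invariant by construction, and it is $F$-invariant because a CA that is permutative at even a single site is surjective, so that the uniform measure is preserved (by the fact recalled just after the Curtis--Hedlund--Lyndon theorem). Bi-invariance lets me apply Proposition \ref{main}, reducing everything to the partition $\mathcal{S}_0$, so that $ER_{\mu_\lambda}(A^{\Z^2},F)=ER_{\mu_\lambda}(\mathcal{S}_0,F)=\limsup_n\frac1n h_{\mu_\lambda}(\mathcal{S}'_n,F)$. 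The upper bound $ER_{\mu_\lambda}(\mathcal{S}_0,F)\le 8r\log(\#A)$ is then immediate from Proposition \ref{finite}; more precisely, for each $n$ one has $h_{\mu_\lambda}(\mathcal{S}'_n,F)\le H_{\mu_\lambda}(\mathcal{S}'_n)=\#E'_n\log(\#A)$ with $\#E'_n=8rn-4r^2+4r$, using (\ref{m-uniforme}) and the general inequality $h_\mu(\mathcal{P},T)\le H_\mu(\mathcal{P})$.

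For the lower bound I would show that the four-sided permutativity forces $h_{\mu_\lambda}(\mathcal{S}'_n,F)$ up to this same static value, up to an $n$-independent correction $c(r)$, namely $h_{\mu_\lambda}(\mathcal{S}'_n,F)\ge(\#E'_n-c(r))\log(\#A)$. Since $h_{\mu_\lambda}(\mathcal{S}'_n,F)=\lim_N\frac1N H_{\mu_\lambda}(\bigvee_{i=0}^{N-1}F^{-i}(\mathcal{S}'_n))$ and this sequence is non-increasing in $N$, it suffices to prove that the successive time-slices $\mathcal{S}'_n,F^{-1}(\mathcal{S}'_n),\dots,F^{-(N-1)}(\mathcal{S}'_n)$ are (almost) mutually independent under $\mu_\lambda$, so that the join has entropy at least $N(\#E'_n-c(r))\log(\#A)$. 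The mechanism is the familiar permutative one: when passing from the data $(F^i(x)|_{E'_n})_{i<k}$ to the next slice $F^k(x)|_{E'_n}$, each output coordinate on the right side of the band is, by permutativity at $p_4=(r,0)$, a bijective function of a still-unconstrained coordinate of $F^{k-1}(x)$ lying one step further to the right, while permutativity at $p_1,p_2,p_3$ plays exactly the same role for the top, bottom and left sides. Ordering the relevant $x$-coordinates against the data slots so that this dependence becomes triangular exhibits the data map as a measure-preserving bijection onto a full product space, whence the uniformity and independence asserted by (\ref{m-uniforme}).

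The hard part is precisely this triangular bookkeeping near the four corners of the band, where the fresh coordinate feeding a right-side output and the one feeding a top-side output may coincide, so that one cannot naively allocate a distinct unconstrained input to each of the $\#E'_n$ outputs. I would circumvent this by discarding a fixed margin of width $O(r)$ at each corner --- losing only the $n$-independent quantity $c(r)$ --- and running the reconstruction only on the four straight portions of the band, where permutativity in a single direction suffices and the fresh inputs are manifestly distinct across sides, positions, and time steps. This yields $\frac1N H_{\mu_\lambda}(\bigvee_{i=0}^{N-1}F^{-i}(\mathcal{S}'_n))\ge(\#E'_n-c(r))\log(\#A)$ for every $N$, hence $h_{\mu_\lambda}(\mathcal{S}'_n,F)\ge(\#E'_n-c(r))\log(\#A)$.

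Finally, feeding this into the definition gives $\frac1n h_{\mu_\lambda}(\mathcal{S}'_n,F)\ge\frac{8rn-4r^2+4r-c(r)}{n}\log(\#A)$, whose limit as $n\to\infty$ is $8r\log(\#A)$; combined with the upper bound this pins $\limsup_n\frac1n h_{\mu_\lambda}(\mathcal{S}'_n,F)$ at exactly $8r\log(\#A)$, which is the claimed value of $ER_{\mu_\lambda}(A^{\Z^2},F)$. I expect the corner analysis to be the sole genuine obstacle; everything else is an application of Propositions \ref{main} and \ref{finite} together with the product structure of $\mu_\lambda$. I would also note in passing that, because the per-step increment of the join entropy is constant in $N$, the defining quantity is in fact a true limit rather than merely a $\limsup$, which is the source of the convergence recorded in Proposition \ref{permu-convergence}.
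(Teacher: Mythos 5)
Your overall strategy is the paper's: establish bi-invariance of $\mu_\lambda$ (via Willson's result that permutativity at one site forces invariance of the uniform measure), reduce to $\mathcal{S}_0$ by Proposition \ref{main}, get the upper bound from Proposition \ref{finite}, and prove the matching lower bound by using permutativity at the four mid-edge points to show that $\bigvee_{i=0}^{N-1}F^{-i}(\mathcal{S}'_n)$ carries roughly $8rnN\log(\#A)$ bits; the paper does this by asserting, for each side $s$, the refinement $\bigvee_{i=0}^{N-1}F^{-i}(\mathcal{R}_s^n)\curlyeqsucc\bigvee_{i=0}^{(N-1)r}\sigma^{v_s i}(\mathcal{R}_s^n)$ (the temporal join determines an outward strip of width $\sim Nr$) and then invoking (\ref{m-uniforme}). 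Your ``independence of time slices via a triangular bijection'' is the same mechanism in different words, and your closing remark about the true limit matches Remark \ref{rem-converge}.

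The gap is in that triangular step, and it is not (only) at the corners. Along a single straight side, say the right one, the output $F^k(x)_{(c,j)}$ is a bijective function of the fresh coordinate $F^{k-1}(x)_{(c+r,j)}$ \emph{only when the neighbouring fresh coordinates} $F^{k-1}(x)_{(c+r,j')}$, $0<|j-j'|\le r$, \emph{are held fixed} --- permutativity is assumed at the single point $(r,0)$, which sits in the middle of the rightmost column of the window $E_r$, not at its end as in the one-dimensional right-permutative case. Those neighbouring coordinates are themselves the fresh inputs assigned to the outputs $(c,j')$ of the same generation, so no ordering of the data slots makes the dependence triangular: within each new column you face a system of equations each permutative in its \emph{central} variable. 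Such a system need not determine its unknowns; for instance with $A=\{0,1\}$ and $f=x_{(r,-1)}+x_{(r,0)}+x_{(r,1)}\bmod 2$ (permutative at $(r,0)$), a finite segment of the new column is recovered only up to the kernel of a centre-permutative one-dimensional map, i.e.\ up to $2^{O(r)}$ choices. Consequently the asserted ``measure-preserving bijection onto a full product space'' does not exist, and discarding $O(r)$ margins at the corners does not repair this, since the circularity lives along the whole edge. What saves the statement is that the ambiguity is confined to an $O(r)$-neighbourhood of the extremities of each side and of each time step, so the entropy defect is $O\bigl(r^2(N+n)\bigr)\log(\#A)$ and vanishes after dividing by $Nn$; alternatively, in the spirit of (\ref{m-uniforme}), one can show that the conditional distribution of each new output column is uniform even though it does not determine the new input column. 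You should supply one of these two estimates. Be aware that the paper's own displayed refinement $\bigvee_{i=0}^{N-1}F^{-i}(\mathcal{R}_1^p)\curlyeqsucc\bigvee_{i=0}^{(N-1)r}\sigma^{(-i,0)}(\mathcal{R}_1^p)$ is asserted without proof and is subject to exactly the same objection, so your write-up is no less rigorous than the original --- but since you explicitly claimed the corners were the sole genuine obstacle, the mislocation of the difficulty is worth correcting.
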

\begin{proof}

We note that $\mu_\lambda$ is a $F$-invariant measure since 
it was shown by Winston in \cite{Wi}  that a cellular automaton permutative at only one point $p\in E_r$ is  invariant by the uniform measure.
Since $\mu_\lambda$ is invariant with respect to the group of shift by Proposition \ref{main} we have $ER_{\mu_\lambda} (A^{\Z^2},F)=ER_{\mu_\lambda} (\mathcal{S}_0,F)$.  
Hence using Proposition \ref{finite} 
 we can finish the proof showing that  for all $p\in\N$ and $n\in\N$ one has 
$$
ER_{\mu_\lambda} (A^{\Z^2},F)\ge 8r\times \log(\#A ).
$$

For all $1\le s\le 4$ and $p\in\N$ let $R_s^p$ $(1\le s\le 4)$ be the four sides of the square 
$E'_p=\cup_{s=1}^{4}R_s^p$.
More formally  $R_1^p=\{(i,j)\in\Z^2\vert p-r\le i\le p\mbox{ and }-p\le j\le p\}$, 
$R_2^p=\{(i,j)\in\Z^2\vert -p\le i\le p-1\mbox{ and }p-r\le j\le p\}$, $R_3^p=\{(i,j)\in\Z^2\vert -p+r\le i\le -p\mbox{ and }-p\le j\le p-1\}$ and $R_4^p=\{(i,j)\in\Z^2\vert -p+1\le i\le p-1\mbox{ and }-p+r\le j\le p\}$. 
 For all $1\le s\le 4$ define $\mathcal{R}_s^p=\bigvee_{v\in R_s^p}\sigma^{-i}\mathcal{S}_0$.
Using the commutativity of $F$ and $\sigma$ we can write that  for all positive integer $n$ one has  
$$
\bigvee_{i=0}^{n-1} F^{-i}(\mathcal{S}'_p) =
 \bigvee_{i=0}^{n-1} F^{-i} \left(\bigvee_{s=1}^{4}\mathcal{R}_s^p\right)=
 \bigvee_{s=1}^{4}\left(\vee_{i=0}^{n-1} F^{-i}(\mathcal{R}_s^p)\right).
$$
Since $F$ is permutative at $(r,0)$ one has
$$
\bigvee_{i=0}^{n-1}F^{-i}(\mathcal{R}_1^p)\curlyeqsucc \bigvee_{i=0}^{(n-1)r}\sigma^{(-i,0)}(\mathcal{R}_1^p).
$$
 
More generaly since $F$ is permutative at $(r,0)$, $(0,r)$, $(-r,0)$ and $(0,-r)$  one has 
$$
\bigvee_{i=0}^{n-1} F^{-i}(\mathcal{S}'_p)
\curlyeqsucc \bigvee_{i=0}^{(n-1)r}\sigma^{(-i,0)}(\mathcal{R}_1^p)\bigvee_{i=0}^{(n-1)r}\sigma^{(0,-i)}(\mathcal{R}_2^p)
 \bigvee_{i=0}^{(n-1)r}\sigma^{(i,0)}(\mathcal{R}_3^p) \bigvee_{i=0}^{(n-1)r}\sigma^{(0,i)}(\mathcal{R}_4^p)
$$
$$
=\bigvee_{s=1}^4 \left(\bigvee_{R_s(n,p)}\sigma^{v}\mathcal{S}_0\right)
$$
where $R_1(n,p)=\Z^2\cap\{-p+1\le j\le p\mbox{ and }p-r\le i\le p+(n-1)r\}$, $R_2(n,p)=
\Z^2\cap\{-p+1\le i\le p-1\mbox{ and }p-r\le j\le p+(n-1)r\}$, $R_3(n,p)=\Z^2\cap\{-p\le j\le p\mbox{ and }-p-(n-1)r\le i\le -p+r\}$ and 
$R_4(n,p)=\Z^2\cap\{-p+1\le i\le p\mbox{ and }-p-(n-1)r\le j\le -p+r\}$.

From equality \ref{m-uniforme} we get 
\begin{equation}\label{convergencemu}
H_{\mu_\lambda} (\vee_{i=0}^{n-1} F^{-i}\mathcal{S}'_p)\ge 
H_{\mu_\lambda}\left(\bigvee_{s=1}^4\left(\bigvee_{R_s(n,p)}\sigma^{v}\mathcal{S}_0\right)\right)=8rnp \log (\# A)
\end{equation}
which finish the proof.
\end{proof}

Using the same techniques of proof it is possible to compute 
the entropy rate of  CA permutative at $(i,r)$, $(j,-r)$, $(-r,k)$, $(r,l)$ with $(-r< i,j,k,l< r)$.


\begin{rem}\label{rem-converge}
Note that if $F$ is permutative at all points in $Pt$ equation \ref{convergencemu}  implies that   
$\left(\frac{1}{n}h_\mu (\mathcal{S}_n,F)\right)_{n\in\N}$ is a converging sequence. 
Recall that 
Proposition \ref{compare-lim} brings some basic informations about this sequence and 
the following result show that the limit also exists for CA permutative at only two 
points in $Pt\subset E_r$. 
\end{rem}

\begin{pro}\label{permu-convergence}
If $F$ is a CA of  radius $r$  permutative at $(0,r)$ and $(-r,0)$ or at 
$(r,0)$ and $(0,-r)$  then 
$\left(\frac{1}{n}h_{\mu_\lambda }(\mathcal{S}_n,F)\right)_{n\in\N}$ 
is a converging sub-additive sequence.
\end{pro}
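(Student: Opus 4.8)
The plan is to reduce everything to a single sub-additivity inequality and then apply Fekete's lemma. Write $b_n := h_{\mu_\lambda}(\mathcal{S}_n,F)$, so that the sequence in the statement is $a_n = \frac1n b_n$. By Lemma \ref{squareent}(i) one has $b_n = h_{\mu_\lambda}(\mathcal{S}_n,F) = h_{\mu_\lambda}(\mathcal{S}'_n,F)$ for $n\ge r$, and by Proposition \ref{finite} the $b_n$ are non-negative with $b_n\le 8rn\log(\#A)$, so $(a_n)$ is bounded. I claim it suffices to prove the single inequality
$$ b_{m+n}\le b_m+b_n. $$
Indeed this immediately gives $a_{m+n}\le a_m+a_n$ (dividing by $m+n$ and using $\frac{1}{m+n}\le\frac1m,\frac1n$), so $(a_n)$ is sub-additive; and a non-negative sub-additive sequence $(b_n)$ satisfies $\lim_n\frac{b_n}{n}=\inf_n\frac{b_n}{n}$ by Fekete's lemma, so $(a_n)$ converges.

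The heart of the proof is thus $b_{m+n}\le b_m+b_n$, and this is where permutativity at two \emph{adjacent} sides is used; take $(0,r)$ and $(-r,0)$, the pair $(r,0)$, $(0,-r)$ being symmetric. The mechanism I would exploit is the one behind Proposition \ref{permu1}: permutativity at a mid-side point lets the backward iterates $\bigvee_i F^{-i}$ of the corresponding side band be compared with a purely \emph{spatial} extension of that band in the perpendicular direction, and for the uniform measure such cylinder partitions have an entropy computed exactly by (\ref{m-uniforme}). The geometric point is that the two chosen directions are perpendicular and adjacent, so the joint propagation stays inside one quadrant; this should let me unfold the $F$-action on the square band into an essentially one-dimensional (diagonal) growth, for which the radius-$(m+n)$ band is carried by a radius-$m$ band together with a shifted radius-$n$ band. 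Combining such a decomposition with the join sub-additivity $h_{\mu_\lambda}(\mathcal{P}\vee\mathcal{Q},F)\le h_{\mu_\lambda}(\mathcal{P},F)+h_{\mu_\lambda}(\mathcal{Q},F)$ and the shift-invariance $h_{\mu_\lambda}(\sigma^v\mathcal{P},F)=h_{\mu_\lambda}(\mathcal{P},F)$ (which holds since $\mu_\lambda$ is bi-invariant and $F$ commutes with $\Sigma$) then yields the desired inequality.

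The main obstacle is precisely the passage from the \emph{factor-$8$} comparison of Proposition \ref{compare-lim} to a \emph{factor-$1$} sub-additivity. For an arbitrary square band all four sides have to be counted, which is what produces the constant $8$ there; the whole force of assuming permutativity at two adjacent sides is that it should collapse this two-dimensional overcounting into one-dimensional, additive growth in the band radius. Concretely, the delicate step is to check that the size-$m$ and size-$n$ unfoldings together determine the size-$(m+n)$ band orbit with no multiplicative blow-up, i.e.\ that the equiprobable-atom count furnished by (\ref{m-uniforme}) is additive rather than multiplicative in the radius; this is the point I expect to require the most care. Once $b_{m+n}\le b_m+b_n$ is established, Fekete's lemma finishes the proof and identifies the limit as $\inf_n\frac1n h_{\mu_\lambda}(\mathcal{S}_n,F)$.
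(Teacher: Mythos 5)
Your overall skeleton is exactly the paper's: reduce the statement to the single inequality $b_{n+p}\le b_n+b_p$ for $b_n=h_{\mu_\lambda}(\mathcal{S}_n,F)$ and invoke Fekete's lemma. But the proposal never actually establishes that inequality: the paragraph where it should be proved describes a ``mechanism you would exploit'' and explicitly defers ``the delicate step'' to later. That step is the entire content of the proposition, so as written there is a genuine gap. Moreover, the heuristic you offer for closing it points in a slightly wrong direction: you suggest one must show that the ``equiprobable-atom count furnished by (\ref{m-uniforme}) is additive rather than multiplicative in the radius,'' i.e.\ some exact entropy computation for $\mu_\lambda$. No such computation is needed, and the factor-$8$ of Proposition \ref{compare-lim} is not the obstacle: that factor comes from covering a large band by translates of a small one, whereas here one needs a genuinely different, \emph{dynamical} coarseness relation.

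What the paper actually proves is the refinement
$$
\mathcal{S}_{n+p}\ \curlyeqprec\ \bigvee_{i=0}^{m-1}F^{-i}\bigl(\sigma^{(p,p)}\mathcal{S}_n\bigr)\ \vee\ \sigma^{(-p,-p)}\mathcal{S}_p
\qquad\text{for } m\ge\lceil p/r\rceil ,
$$
obtained by iterating permutativity at the two adjacent mid-side points $(0,r)$ and $(-r,0)$ (resp.\ $(r,0)$ and $(0,-r)$): knowing $x$ on a translated square $E_n+(p,p)$ and the values of $F^ix$ there for $i<m$ recovers, step by step, the coordinates of $x$ in the two perpendicular directions, and after $\lceil p/r\rceil$ steps this fills in all of $E_{n+p}$ except a corner square covered by $\sigma^{(-p,-p)}\mathcal{S}_p$. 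From this single relation one applies $\vee_{i=0}^{k-1}F^{-i}$ to both sides, uses $H_{\mu_\lambda}(\mathcal{P}\vee\mathcal{Q})\le H_{\mu_\lambda}(\mathcal{P})+H_{\mu_\lambda}(\mathcal{Q})$ together with the shift invariance of $\mu_\lambda$ to drop the translations $\sigma^{(p,p)}$, $\sigma^{(-p,-p)}$, divides by $k$ and lets $k\to\infty$ to get $h_{\mu_\lambda}(\mathcal{S}_{n+p},F)\le h_{\mu_\lambda}(\mathcal{S}_n,F)+h_{\mu_\lambda}(\mathcal{S}_p,F)$. Only shift invariance of the measure and standard sub-additivity of $H_{\mu_\lambda}$ enter; equality (\ref{m-uniforme}) plays no role here. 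To complete your proof you would need to state and verify this coarseness relation (or an equivalent one) explicitly; everything else in your write-up (boundedness via Proposition \ref{finite}, the Fekete step, the identification of the limit as $\inf_n\frac1n h_{\mu_\lambda}(\mathcal{S}_n,F)$) is correct.
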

\begin{proof}

Since $F$ is  permutative at  $(0,r)$ and $(-r,0)$ or at 
$(r,0)$ and $(0,-r)$ we 
  get for all  
$m\ge \lceil\frac{p}{r}\rceil$  
$$
\mathcal{S}_{n+p} \curlyeqprec \vee_{i=0}^{m-1} F^{-i}\left(\sigma^{(p,p)}\mathcal{S}_n\right) \vee \sigma^{(-p,-p)}\mathcal{S}_p.
$$ 
Since $\mu_\lambda$ is a shift invariant measure we get  
$$
H_\mu (\mathcal{S}_{n+p})\le H_\mu (\vee_{i=0}^{m-1}F^{-i}\mathcal{S}_n)+H_\mu (\mathcal{S}_p)
$$
and for all $k\in\N$ we have 
$$
H_\mu (\vee_{i=0}^{k-1}\mathcal{S}_{n+p})\le H_\mu (\vee_{i=0}^{k+m-2}F^{-i}\mathcal{S}_n)+H_\mu (\vee_{i=0}^{k-1}F^{-i} \mathcal{S}_p)
$$
which, dividing by $k$ and then letting $k$ go to $\infty$, implies that $h_\mu (\mathcal{S}_{n+p},F)\le h_\mu (\mathcal{S}_{n},F)+h_\mu (\mathcal{S}_{p},F)$.
 
It follows that  $\left(\frac{1}{n}h_\mu (\mathcal{S}_n,F)\right)_{n\in\N}$ is a non-increasing sub-additive  converging sequence 
\end{proof}
  
When a CA is not permutative at the four sides of the square $E_r$ the calculous of the entropy rate is more complicated.
Nevertheless for the subclass of additive CA like for the one dimensional case for the entropy (see \cite{damico})  we can compute explicitly the value of $ER_{\mu_\lambda}(A^{\Z^2},F)$ and show that the entropy is proportional to the number 
of ''additive sites''.
Note that an  additive CA is cellular automaton defined thanks to an additive local rule. 
To simplify the notations we will restrict our results to the space $\{0,1\}^{\Z^2}$.  

Call $F_{(1,2)}$ the CA defined thanks the local rule $f_{(1,2)}(E_r)=x_{(r,0)}+x_{(0,r)} \mod\, 2$, 
$F_{(3,4)}$ the CA defined by $f_{(3,4)}(E_r)=x_{(-r,0)}+x_{(0,-r)} \mod\, 2$,  
$F_{(1,3)}$ the CA defined by $f_{(1,2)}(E_r)=x_{(r,0)}+x_{(0,r)} \mod\, 2$ and 
$F_{(1)}$ the CA defined by $f_{(1)}(E_r)=x_{(r,0)} \mod\, 2$.
\begin{pro}\label{additiveCA}
We have  $ER_{\mu_\lambda}\left(\{0,1\}^{\Z^2}, F_{(1,2)}\right)=ER_{\mu_\lambda}\left(\{0,1\}^{\Z^2}, F_{(3,4)}\right)$ 
 $=ER_{\mu_\lambda}\left(\{0,1\}^{\Z^2}, F_{(1,3)}\right)=4r\ln (2)$ and $ER_{\mu_\lambda}\left(\{0,1\}^{\Z^2}, F_{(1)}\right)=2r\ln (2)$. 
\end{pro}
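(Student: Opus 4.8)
The plan is to reduce everything to a rank computation over $\mathbb{F}_2$. Each of the four maps preserves the uniform measure $\mu_\lambda$ (the permutative ones are onto, and $F_{(1)}=\sigma^{(r,0)}$ is bijective), and $\mu_\lambda$ is shift invariant, so by Proposition \ref{main} it suffices to evaluate $ER_{\mu_\lambda}(\mathcal{S}_0,F)=\limsup_{n\to\infty}\frac1n h_{\mu_\lambda}(\mathcal{S}_n,F)$ in each case. I would exploit additivity by identifying an $\mathbb{F}_2$-linear functional $x\mapsto\sum_w c_w x_w$ with the Laurent polynomial $\sum_w c_w X^{w_1}Y^{w_2}\in\mathbb{F}_2[X^{\pm1},Y^{\pm1}]$: then $\sigma^v$ acts by multiplication by $X^{v_1}Y^{v_2}$ and each $F$ acts by multiplication by a fixed $P$, namely $P=X^r+Y^r$, $X^{-r}+Y^{-r}$, $X^r+X^{-r}$ and $X^r$ respectively. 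Generalising (\ref{m-uniforme}), under $\mu_\lambda$ any partition defined by a family of $\mathbb{F}_2$-linear functionals has equiprobable atoms, so its entropy is $\ln 2$ times the dimension of the span of those functionals. This gives $h_{\mu_\lambda}(\mathcal{S}_n,F)=\ln 2\cdot\lim_{N\to\infty}\frac1N\dim_{\mathbb{F}_2}V_{N,n}$, where $V_{N,n}=\mathrm{span}\{X^{v_1}Y^{v_2}P^{\,i}\mid v\in E_n,\ 0\le i<N\}$, so the whole statement becomes a computation of $\lim_N\frac1N\dim V_{N,n}$ and of its slope in $n$.

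For the lower bounds I would reuse the expanding-band mechanism of Proposition \ref{permu1}: at every site where $F$ is permutative, $n$ iterations of $F^{-1}$ spread one band over about $2rnp$ fresh coordinates, the bands are pairwise disjoint, and (\ref{m-uniforme}) then forces $H_{\mu_\lambda}(\vee_{i=0}^{n-1}F^{-i}\mathcal{S}'_p)\ge(\mbox{number of permutative sites})\cdot 2rnp\ln 2$. Hence $ER_{\mu_\lambda}\ge 2r\ln 2$ per permutative site, i.e. $\ge 4r\ln 2$ for $F_{(1,2)},F_{(3,4)},F_{(1,3)}$ and $\ge 2r\ln 2$ for $F_{(1)}$. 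The real content is the matching upper bounds, where the crude estimate $8r\ln 2$ of Proposition \ref{finite} no longer suffices and additivity is indispensable. Two of the cases are effectively one-dimensional. For $F_{(1)}$, $P=X^r$ sends $V_{N,n}$ to the span of the distinct monomials filling the rectangle $E_n+\{(ir,0):0\le i<N\}$, so $\dim V_{N,n}=(2n+1)(2n+1+(N-1)r)$, giving $\lim_N\frac1N\dim V_{N,n}=(2n+1)r$ and $ER_{\mu_\lambda}=\limsup_n\frac{(2n+1)r}{n}\ln 2=2r\ln 2$. For $F_{(1,3)}$, whose rule lives on a single lattice direction, $P=X^r+X^{-r}$ is a polynomial in $X$ alone, so $V_{N,n}$ splits into $2n+1$ independent copies (one per value of $v_2$) of the univariate span $W_{N,n}=\mathrm{span}\{X^{v_1}P^{\,i}\}$; since $\ln 2\cdot\lim_N\frac1N\dim W_{N,n}$ is the entropy of the radius-$n$ window under the one-dimensional additive CA permutative at $\pm r$, it increases to $2r\ln 2$ as $n\to\infty$, whence $ER_{\mu_\lambda}=\limsup_n\frac{2n+1}{n}\cdot 2r\cdot\ln 2=4r\ln 2$. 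Finally $F_{(3,4)}$ is conjugate to $F_{(1,2)}$ by the central symmetry $v\mapsto -v$, which preserves $\mu_\lambda$ and fixes every $\mathcal{S}_n$, so the two share the same entropy rate.

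The genuinely two-dimensional case is $F_{(1,2)}$ with $P=X^r+Y^r$, and here is where I would do the main work. I would grade $V_{N,n}$ by total degree $d=\deg_X+\deg_Y$: each generator $X^{v_1}Y^{v_2}P^{\,i}$ is homogeneous of degree $v_1+v_2+ir$, so $\dim V_{N,n}=\sum_d\dim V_{N,n}^{(d)}$. Dehomogenising (set $Y=1$, $X=z$) identifies the degree-$d$ slice with $\mathrm{span}\{z^{v_1}(z^r+1)^{(d-v_1-v_2)/r}\}$, taken over the $v\in E_n$ for which $(d-v_1-v_2)/r$ is a nonnegative integer less than $N$. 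The key point is that for bulk $d$ all these exponents exceed a common minimum, so every slice generator is divisible by a fixed power of $z^r+1$; dividing it out is injective and leaves a core span whose generators carry a $(z^r+1)$-exponent bounded by about $4n/r$, independently of $d$. Consequently the slice dimension depends only on $d\bmod r$; calling it $c_\rho(n)$ and noting that each residue class is realised by about $N$ bulk values of $d$, I get $\lim_N\frac1N\dim V_{N,n}=\sum_{\rho=0}^{r-1}c_\rho(n)$.

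The hard part will be the exact evaluation of the core dimensions $c_\rho(n)$: one must determine the $\mathbb{F}_2$-rank of the reduced family $z^{v_1}(z^r+1)^{m_v}$ with bounded exponents, which amounts to controlling the $\mathbb{F}_2$-linear relations among powers of $1+z^r$ — a binomial-coefficient-mod-$2$ (Lucas) analysis — while bookkeeping the $r$ residue classes. For $r=1$ the core span is $\mathrm{span}\{z^{v_1}(1+z)^{2n-v_1-v_2}:v\in E_n\}$, which one checks has dimension $4n+1$; the expected outcome in general is $\sum_\rho c_\rho(n)=4rn+O(1)$, giving $ER_{\mu_\lambda}(F_{(1,2)})=\limsup_n\frac{4rn+O(1)}{n}\ln 2=4r\ln 2$. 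Since the $\limsup$ only sees the slope $4r$, and the permutativity estimate of the second paragraph already yields $\ge 4r$, the genuine task reduces to the matching upper bound on the rank, that is, to showing that no independent functionals arise beyond those counted by the core dimensions.
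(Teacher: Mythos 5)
Your lower bounds reproduce the paper's argument (the expanding--band mechanism of Proposition \ref{permu1} applied to each permutative site), and your treatments of $F_{(1)}$ and $F_{(1,3)}$ are complete, if by a different (linear--algebraic) route. But for the central case $F_{(1,2)}$ (and hence $F_{(3,4)}$) the matching upper bound --- which you yourself identify as ``the real content'' --- is not actually proved: you reduce it to an exact evaluation of the core dimensions $c_\rho(n)$ via a Lucas--type analysis of binomial coefficients mod $2$, announce the ``expected outcome'' $\sum_\rho c_\rho(n)=4rn+O(1)$, and stop. As written this is a genuine gap. It is worth noting that the gap is closable inside your own framework more cheaply than you suggest: after dehomogenising the degree--$d$ slice and factoring out $(z^r+1)^{i_{\min}}$ with $i_{\min}\ge (d-2n)/r$, every reduced generator $z^{v_1}(z^r+1)^{m}$ has $z$--degrees confined to $[-n,\,3n]$, so each bulk slice has dimension at most $4n+1$ regardless of its exact rank; since the degrees $d$ occupy roughly $Nr$ values and boundary slices contribute $O(n^2)$ independently of $N$, one gets $\lim_N\frac1N\dim V_{N,n}\le r(4n+1)$ and hence $ER_{\mu_\lambda}\le 4r\ln 2$ with no Lucas analysis at all --- the $\limsup$ in $n$ only sees the slope, exactly as you observe for the lower bound.

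The paper's upper bound is far more elementary and does not use additivity as linear algebra at all: because the local rule of $F_{(1,2)}$ reads \emph{only} the two sites $(r,0)$ and $(0,r)$, and is permutative there, the square pattern on $E_k$ is reconstructible from the two sides $\mathcal{R}_3^k\vee\mathcal{R}_4^k$ together with finitely many of their images, i.e.\ $\bigvee_{i=0}^{2k+1}F_{(1,2)}^{-i}(\mathcal{R}_3^k\vee\mathcal{R}_4^k)\curlyeqsucc\mathcal{S}_k$. Hence $h_{\mu}(\mathcal{S}_n,F_{(1,2)})\le h_{\mu}(\mathcal{R}_3^n\vee\mathcal{R}_4^n,F_{(1,2)})$, and the crude subadditivity bound of Proposition \ref{finite}, applied to the two sides only (about $4rn$ cells) instead of the full band (about $8rn$ cells), already yields $4r\ln 2$. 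You should either import that observation or complete the rank bound as above; in the proposal's current form the equalities for $F_{(1,2)}$ and $F_{(3,4)}$ rest on an unverified claim.
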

\begin{proof}

We first show that $ER_{\mu_\lambda}\left(\{0,1\}^{\Z^2},F_{(1,2)}\right)=4r\ln (2)$.
We use the same notation than in the proof of Proposition \ref{permu1} where $(R_s^p)$  $(1\le s\le 4)$ represent the  four sides of the empty square $E'_p$ and     
$\mathcal{R}_s^p=\bigvee_{v\in R_s^p}\sigma^v (\mathcal{S}_0)$.
Since    
 $\mathcal{S}'_p=\vee_{s=1}^4\mathcal{R}_s^p$.
it is easily seen that 
$$
\bigvee_{i=0}^{n-1} F_{(1,2)}^{-i}(\mathcal{S}'_p) \curlyeqsucc
 \bigvee_{i=0}^{n-1} F_{(1,2)}^{-i} (\bigvee_{s=3}^{4}\mathcal{R}_s^p )=
 \bigvee_{s=3}^{4}\left(\bigvee_{i=0}^{n-1} F_{(1,2)}^{-i}(\mathcal{R}_s^p) \right)
$$
$$
 \curlyeqsucc \bigvee_{i=0}^{(n-1)r}\sigma^{(-i,0)}(\mathcal{R}_1^p) \bigvee_{i=0}^{(n-1)r}\sigma^{(0,-i)}(\mathcal{R}_2^p)
=\bigvee_{v\in R_1(n,p)\cup R_2(n,p)}\sigma^v (\mathcal{S}_0).
$$
Using Equality \ref{m-uniforme} we obtain $H_\mu (\vee_{i=0}^{n-1} F^{-i}_{(1,2)}\mathcal{S}_p)\ge 4rnp\log (\#2)$ which implies that $ER_{\mu_\lambda}\left(\{0,1\}^{\Z^2}, F_{(1,2)}\right)\ge  4r\ln (2)$.
 To obtain the reverse inequality note that   since $F$ is permutative at $(r,0)$ and $(0,r)$  we get for all $k\in\N$
$$
\bigvee_{i=0}^{2k+1}F_{(1,2)}^{-i}(\mathcal{R}_3^k \bigvee\mathcal{R}_4^k) \curlyeqsucc \mathcal{S}_k.
$$ 
From Lemma \ref{squareent} and basic properties of the entropy (see \cite{Wa}) we can assert that $\forall n\in \N$  
$$
h_\mu\left(\mathcal{S}_n,F_{(1,2)}\right)\le h_\mu\left(\bigvee_{i=0}^{2n+1}F_{(1,2)}^{-i}(\mathcal{R}_3^n \bigvee\mathcal{R}_4^n),F_{(1,2)}\right)
=h_\mu\left(\mathcal{R}_3^n \bigvee\mathcal{R}_4^n,F_{(1,2)}\right).
$$ 


  Following the same argument than in the proof of Proposition \ref{finite} we get 
  $$
\hskip -7 cm ER_{\mu_\lambda}\left(\{0,1\}^{\Z^2}, F_{(1,2)}\right)
$$
$$
\le
   \limsup_{n \to \infty} \frac{1}{n} h_\mu(\mathcal{R}_3^n \bigvee\mathcal{R}_4^n,F_{(1,2)}) = \limsup_{n \to \infty} \frac{1}{n} h_\mu(\bigvee_{v \in  R_3^n\cup R_4^n} (\sigma^v(\mathcal{S}_0,F_{(1,2)}))
   $$
   $$
  \le \limsup_{n \to \infty} \frac{1}{n}\sum_{v \in R_3^n\cup R_4^n} h_\mu(\sigma^v(\mathcal{S}_0),F_{(1,2)})
$$
$$
\le 
   \limsup_{n \to \infty} \frac{1}{n}\sum_{v \in  R_3^n\cup R_4^n} H_\mu (\sigma^v(\mathcal{S}_0))\le 4r\ln (2)
  $$
 
 which finally  bring that  $ER_{\mu_\lambda}\left(\{0,1\}^{\Z^2}, F_{(1,2)}\right)=4r\ln (2)$.
  Using the same arguments for $F_{(3,4)}$  and $F_{(1,3)}$ we obtain  
$$
ER_{\mu_\lambda}\left(\{0,1\}^{\Z^2}, F_{(3,4)}\right)=
ER_{\mu_\lambda}\left(\{0,1\}^{\Z^2}, F_{(1,3)}\right)=4r\ln (2).
$$
  Using only the  side $R_1^p$ of the empty squares $E'_p$  it is easily seen that 
$$
ER_{\mu_\lambda}\left(\{0,1\}^{\Z^2}, F_{(1)}\right)=2r\ln (2).
$$ 
 \end{proof}




\section{Topological Entropy rate}\label{topo}
Here we introduce entropy rate in the topological setting. Recall that  relevant properties of the entropy function and of topological entropy are given in Subsection \ref{defent}.

Denote by $\mathbf{R}(A^{\Z^2})$ the set of all finite open covers of $A^{\Z^2}$. In the same way as for partitions in Section \ref{edens}, for $\CC \in \mathbf{R}(A^{\Z^2})$ put $\mathcal{C}'_n=\bigvee_{v \in E'_n} \sigma^v(\mathcal{C})\ (\hbox{for }n\ge r)$ and $\mathcal{C}_n=\bigvee_{v \in E_n} \sigma^v(\mathcal{C})$. Recall that the partitions $\mathcal{S}_n$ and $\mathcal{S}'_n$ introduced in the same Section are also open covers of the set $A^{\Z^2}$.
\begin{de}
Let $F$ be a cellular automaton on $A^{\Z^2}$ with radius $r$. 
The {\it entropy rate} of $\mathcal{C}\in \mathbf{R}(A^{\Z^2})$ 
 is defined as
$$
ER(\mathcal{C},F) = \limsup_{n \to \infty} \frac{1}{n} h(\mathcal{C'}_n,F);
$$
The entropy rate of the topological dynamical system $(A^{\Z^2},F)$  is the non-negative real number 
$$
ER(A^{\Z^2},F) = \sup_{\mathcal{C}\in\mathbf{R}(A^{\Z^2})} \{ER(\mathcal{C},F)\}.
$$
\end{de}

\subsection{First results about topological entropy rate}
\begin{lem}\label{edmaj}
Let $\UU$, $\VV$ be two open covers of $A^{\Z^2}$ with $\UU  \curlyeqprec \VV$. Then
$ER(\UU,F) \le ER(\VV,F)$.
\end{lem}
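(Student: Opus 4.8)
The plan is to reduce the inequality of entropy rates to a pointwise (in $n$) inequality between the topological entropies of the covers $\UU'_n$ and $\VV'_n$, which then survives division by $n$ and passage to the $\limsup$. The whole argument is a monotonicity computation, so I expect no genuine obstacle; the only points needing a moment's care are the behaviour of the coarseness order under shifts and under infinite-looking joins.

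First I would check that the shift maps preserve the ``coarser than'' relation: if $\UU \curlyeqprec \VV$, then for every $v \in \Z^2$ one has $\sigma^v(\UU) \curlyeqprec \sigma^v(\VV)$. Indeed, by definition each element of $\VV$ is contained in some element of $\UU$; applying the homeomorphism $\sigma^v$ to such an inclusion shows that each element of $\sigma^v(\VV)$ is contained in an element of $\sigma^v(\UU)$, which is exactly $\sigma^v(\UU) \curlyeqprec \sigma^v(\VV)$.

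Next, using the join-monotonicity recorded in Subsection \ref{defent} (if $\UU \curlyeqprec \VV$ and $\UU' \curlyeqprec \VV'$ then $\UU \vee \UU' \curlyeqprec \VV \vee \VV'$), together with a straightforward induction on the finitely many coordinates in $E'_n$, I would conclude that for every $n \ge r$
$$\UU'_n = \bigvee_{v \in E'_n} \sigma^v(\UU) \curlyeqprec \bigvee_{v \in E'_n}\sigma^v(\VV) = \VV'_n.$$
Then I invoke the monotonicity of the topological entropy of a cover under refinement, also stated in Subsection \ref{defent} (for covers with $\mathcal{A} \curlyeqprec \mathcal{B}$ one has $h(\mathcal{A},F) \le h(\mathcal{B},F)$), applied to $\UU'_n \curlyeqprec \VV'_n$, to get $h(\UU'_n,F) \le h(\VV'_n,F)$ for all $n \ge r$.

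Finally, dividing this inequality by $n$ and taking the $\limsup$ as $n \to \infty$ yields
$$ER(\UU,F) = \limsup_{n\to\infty}\frac{1}{n}\, h(\UU'_n,F) \le \limsup_{n\to\infty}\frac{1}{n}\, h(\VV'_n,F) = ER(\VV,F),$$
which is the desired conclusion. The induction passing from the two-cover join-monotonicity to the full join $\bigvee_{v \in E'_n}$ is the one place where I would be explicit, but it is entirely routine and constitutes no real difficulty.
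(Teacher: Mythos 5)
Your proof is correct and follows essentially the same route as the paper's: shifts are homeomorphisms and hence preserve the coarseness relation, joins preserve it, entropy of covers is monotone under refinement, and the inequality passes to the $\limsup$ after division by $n$. You merely spell out the induction over the join more explicitly than the paper does.
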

\begin{proof}
For $v \in \Z^2$, owing to the fact that $\sigma^v$ is a homeomorphism $\sigma^v(\UU)$ and  $\sigma^v(\VV)$ are also open covers of $A^{\Z^2}$ and $\sigma^v(\UU)  \curlyeqprec \sigma^v(\VV)$. For $n\in \N$ it follows that $\UU'_n  \curlyeqprec \VV'_n$ and this implies that $h(\UU'_n,F)\le h(\VV'_n,F)$, hence $ER(\UU,F) \le ER(\VV,F)$.
\end{proof}

The next result is a topological analogue of Lemma \ref{squareent} together with Proposition \ref{finite}. The proofs are similar. 
\begin{pro}\label{ertdef1}
For any cellular automaton $F$ of radius $r$ acting on $A^{\Z^2}$, any integer $n\ge r$ one has 
$h(\mathcal{S}_n,F)=h(\mathcal{S}'_n,F)$ and for any $n\in\N$ one has $ER(\mathcal{S}_n,F)=ER(\mathcal{S}_0,F)$. Moreover for all $k\in\N$  we have 
$
ER(\mathcal{S}_k,F)=ER(\mathcal{S}_0,F)\le 8r\log (\# A).
$
\end{pro}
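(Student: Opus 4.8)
The plan is to mirror, in the topological setting, the three measure-theoretic facts established in Lemma \ref{squareent}(i), Lemma \ref{squareent}(ii) and Proposition \ref{finite}, replacing $H_\mu$ by the cover-entropy $H$, the conditional-entropy inequality (\ref{entrop}(ii)) by the sub-additivity inequality (\ref{entroptop}), and the refinement monotonicity of $H_\mu$ by the cover analogue $h(\VV,T)\le h(\UU,T)$ when $\VV\curlyeqprec\UU$. All the structural identities on the index sets ($\mathcal{S}_n=\mathcal{S}'_n\vee\mathcal{S}_{n-r}$, $(\mathcal{S}_0)_i=\mathcal{S}_i$, $\#E'_n\le 8rn$) are purely combinatorial and carry over verbatim, so the argument is a faithful transcription once the right topological ingredients are in place.

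First I would prove $h(\mathcal{S}_n,F)=h(\mathcal{S}'_n,F)$ for $n\ge r$. The inequality $h(\mathcal{S}'_n,F)\le h(\mathcal{S}_n,F)$ is immediate from $\mathcal{S}'_n\curlyeqprec\mathcal{S}_n$ and the monotonicity of $h(\cdot,F)$ recalled in Subsection \ref{defent}. For the reverse direction I would reproduce the reduction used in Lemma \ref{squareent}(i): because $F$ has radius $r$, all coordinates of $F(x)$ in $E_{n-r}$ are determined by the coordinates of $x$ in $E_n$, so $F^{-i}(\mathcal{S}_{n-r})\curlyeqprec F^{-i-1}(\mathcal{S}'_n\vee\mathcal{S}_{n-r})$, and applying $F^{-1}$ inductively collapses the join to
$$
\bigvee_{i=0}^{N-1} F^{-i}(\mathcal{S}'_n)\vee F^{-N+1}(\mathcal{S}_{n-r}).
$$
Feeding this into the definition of $h(\mathcal{S}_n,F)$ and using (\ref{entroptop}), the trailing term contributes $\tfrac1N H(F^{-N+1}(\mathcal{S}_{n-r}))$, which vanishes as $N\to\infty$ since $H(F^{-j}(\mathcal{S}_{n-r}))\le H(\mathcal{S}_{n-r})$ is bounded independently of $j$. (Note one must use the cover bound $H(F^{-j}\mathcal{C})\le H(\mathcal{C})$ rather than the invariance equality available for $\mu$; this suffices because only boundedness is needed.)

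Next, $ER(\mathcal{S}_n,F)=ER(\mathcal{S}_0,F)$ follows exactly as in Lemma \ref{squareent}(ii): using $(\mathcal{S}_0)_i=\mathcal{S}_i$ one writes
$$
ER(\mathcal{S}_i,F)=\limsup_{n\to\infty}\frac{1}{n}h(\mathcal{S}_{n+i},F)=\limsup_{n\to\infty}\frac{1}{n+i}h(\mathcal{S}_{n+i},F)=ER(\mathcal{S}_0,F),
$$
and the $\mathcal{S}'_n$ version follows from the first part. Finally, for the bound I would copy Proposition \ref{finite}: using $\mathcal{S}'_n=\bigvee_{v\in E'_n}\sigma^v(\mathcal{S}_0)$ and the topological sub-additivity (\ref{entroptop}) gives $h(\mathcal{S}'_n,F)\le\sum_{v\in E'_n}h(\sigma^v(\mathcal{S}_0),F)$, and each term is bounded by $H(\sigma^v(\mathcal{S}_0))=\log(\#A)$; since $\#E'_n\le 8rn$ the $\tfrac1n$ factor yields $ER(\mathcal{S}_0,F)\le 8r\log(\#A)$. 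The main obstacle, such as it is, is purely bookkeeping: one must check that every lemma invoked for partitions ((\ref{entrop}(ii)), $F$-invariance of $H_\mu$, refinement monotonicity) has a genuine cover counterpart already recorded in Subsection \ref{defent}, and in particular that the boundedness of $H(F^{-j}\mathcal{S}_{n-r})$ replaces the measure-invariance step cleanly — which it does, since for the vanishing of $\tfrac1N H(\cdots)$ a uniform bound is all that is required.
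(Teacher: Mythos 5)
Your proposal is correct and follows essentially the same route as the paper, whose proof of this proposition simply states that the sub-additivity of $H$ and the bound $h(\mathcal{C},F)\le H(\mathcal{C})$ let one repeat the arguments of Lemma \ref{squareent} and Proposition \ref{finite} verbatim. Your one substantive addition — replacing the $F$-invariance equality $H_\mu(F^{-N+1}\mathcal{S}_{n-r})=H_\mu(\mathcal{S}_{n-r})$ by the uniform bound $H(F^{-j}\mathcal{S}_{n-r})\le H(\mathcal{S}_{n-r})$ — is exactly the right way to fill in the detail the paper leaves implicit.
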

\begin{proof}
Since the topological entropy function $H$ has the same sub-additivity property as $H_\mu$, and 
since for every finite open cover $\CC$ one has $h(\CC,F)\le H(\CC)$ (see section 2), 
we can use the same arguments as in the proofs of Lemma \ref{squareent} and Proposition 3.5 
 to obtain this result. 
\end{proof}
 The next result is the topological analogue of Proposition \ref{main}.  
\begin{pro}\label{idented}
For any cellular automaton $F$ on $A^{\Z^2}$ one has
 $ER(A^{\Z^2},F)=ER(\mathcal{S}_0,F)$.
\end{pro}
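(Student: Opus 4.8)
The plan is to mimic the structure of Proposition \ref{main}, but to exploit a feature special to the topological setting: the cylinder covers $\mathcal{S}_n$ form a refining family that is cofinal among all finite open covers of $A^{\Z^2}$. This replaces the conditional-entropy estimate $H_\mu(\mathcal{P}|\mathcal{S}'_k)\to 0$ that was the crux of the measure-theoretic argument, and it also explains why, unlike Proposition \ref{main}, the statement here needs no invariance hypothesis on any measure.

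First I would record the easy inequality. Since $\mathcal{S}_0$ is a clopen partition, it is in particular a finite open cover, so $\mathcal{S}_0\in\mathbf{R}(A^{\Z^2})$; by the definition of $ER(A^{\Z^2},F)$ as a supremum over $\mathbf{R}(A^{\Z^2})$ this gives at once $ER(A^{\Z^2},F)\ge ER(\mathcal{S}_0,F)$.

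The substantive direction is the reverse inequality. Fix an arbitrary finite open cover $\mathcal{C}\in\mathbf{R}(A^{\Z^2})$. As recalled in Section \ref{edens}, because $A^{\Z^2}$ is compact and the covers $\mathcal{S}_N$ generate its topology, there is an integer $N$ with $\mathcal{C}\curlyeqprec\mathcal{S}_N$ (in the coarseness convention of Subsection \ref{defent}). Lemma \ref{edmaj} then yields $ER(\mathcal{C},F)\le ER(\mathcal{S}_N,F)$, and Proposition \ref{ertdef1} identifies $ER(\mathcal{S}_N,F)=ER(\mathcal{S}_0,F)$. Hence $ER(\mathcal{C},F)\le ER(\mathcal{S}_0,F)$ for every $\mathcal{C}$, and taking the supremum over all $\mathcal{C}\in\mathbf{R}(A^{\Z^2})$ gives $ER(A^{\Z^2},F)\le ER(\mathcal{S}_0,F)$. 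Combined with the first step this is the asserted equality.

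I do not expect a genuine obstacle: the two nontrivial ingredients, the monotonicity Lemma \ref{edmaj} and the invariance $ER(\mathcal{S}_N,F)=ER(\mathcal{S}_0,F)$ of Proposition \ref{ertdef1}, are already established. The only point that requires care is invoking the cofinality statement in the correct direction, namely $\mathcal{C}\curlyeqprec\mathcal{S}_N$ rather than its opposite, so that Lemma \ref{edmaj} applies with $\mathcal{C}$ as the coarser cover and the inequality for entropy rates points the intended way.
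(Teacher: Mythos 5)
Your proof is correct and follows essentially the same route as the paper: the authors likewise use the Lebesgue Covering Lemma to find $k$ with $\mathcal{C}\curlyeqprec\mathcal{S}_k$, then apply Lemma \ref{edmaj} and Proposition \ref{ertdef1}, and finish by noting $\mathcal{S}_0\in\mathbf{R}(A^{\Z^2})$. No gaps.
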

\begin{proof}
The common diameter of the elements of $\mathcal{S}_k$ goes to 0 as $k \to \infty$. By the Lebesgue Covering Lemma, for any cover $\mathcal{C}\in \mathbf{R}(A^{\Z^2})$ there exists a positive  integer $k$ such that 
$\mathcal{C} \curlyeqprec \mathcal{\mathcal{S}}_k$. By Lemma \ref{edmaj} it follows that  $ER(\CC,F) \le ER(\mathcal{S}_k,F)$. By Proposition \ref{ertdef1} $ER(\mathcal{S}_k,F)= ER(\mathcal{S}_0,F)$, which means that any open cover $\CC$ has entropy rate less than or equal to $ER(\mathcal{S}_0,F)$. Since $\mathcal{S}_0 \in \mathbf{R}(A^{\Z^2})$ the result follows. 
\end{proof}
The next result shows that since it has the same properties it is possible to choose another definition for the entropy rate of an open cover $\mathcal{C}$:
$\overline{ER(\mathcal{C},F)} = \limsup_{n \to \infty} \frac{1}{n} h(\mathcal{C}_n,F)$.
 Note that since  for any $n\in\N$ $\mathcal{C}_n\curlyeqsucc \mathcal{C}'_n$ 
$$
\overline{ER(\mathcal{C},F)} = \limsup_{n \to \infty} \frac{1}{n} h(\mathcal{C}_n,F)\ge \limsup_{n \to \infty} \frac{1}{n} h(\mathcal{C}'_n,F)=ER(\mathcal{C},F).
$$
Note that we have chosen  $ER(\mathcal{C},F)$ for its similarity with the measurable case.

\begin{pro}\label{secondtype}
For any cellular automaton $F$ on $A^{\Z^2}$ one has 

$$
\sup_{\mathcal{C}\in\mathbf{R}(A^{\Z^2})} \{\overline{ER(\mathcal{C},F)}\}=\sup_{\mathcal{C}\in\mathbf{R}(A^{\Z^2})} \{ER(\mathcal{C},F)\}=ER(\mathcal{S}_0,F)=ER(A^{\Z^2},F).
$$

\end{pro}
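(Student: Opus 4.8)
The plan is to lean on Proposition \ref{idented}, which already supplies the last two equalities $\sup_{\mathcal{C}} ER(\mathcal{C},F) = ER(\mathcal{S}_0,F) = ER(A^{\Z^2},F)$. Hence only the first equality, $\sup_{\mathcal{C}\in\mathbf{R}(A^{\Z^2})} \overline{ER(\mathcal{C},F)} = ER(\mathcal{S}_0,F)$, requires proof, and I would establish it by the two opposite inequalities.

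For the inequality $\ge$, I would invoke the elementary observation recorded just before the statement: for every open cover $\mathcal{C}$ one has $\overline{ER(\mathcal{C},F)} \ge ER(\mathcal{C},F)$, since $\mathcal{C}_n \curlyeqsucc \mathcal{C}'_n$. Taking the supremum over $\mathcal{C} \in \mathbf{R}(A^{\Z^2})$ and applying Proposition \ref{idented} gives $\sup_{\mathcal{C}} \overline{ER(\mathcal{C},F)} \ge \sup_{\mathcal{C}} ER(\mathcal{C},F) = ER(\mathcal{S}_0,F)$.

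For the reverse inequality I would first note that the two notions coincide on the distinguished cover $\mathcal{S}_0$. Indeed $(\mathcal{S}_0)_n = \mathcal{S}_n$ and $(\mathcal{S}_0)'_n = \mathcal{S}'_n$, so that $\overline{ER(\mathcal{S}_0,F)} = \limsup_n \frac{1}{n} h(\mathcal{S}_n,F)$ while $ER(\mathcal{S}_0,F) = \limsup_n \frac{1}{n} h(\mathcal{S}'_n,F)$; these are equal because Proposition \ref{ertdef1} gives $h(\mathcal{S}_n,F) = h(\mathcal{S}'_n,F)$ for $n \ge r$. Next, fixing an arbitrary cover $\mathcal{C}$, the Lebesgue Covering Lemma (exactly as in the proof of Proposition \ref{idented}) produces an integer $k$ with $\mathcal{C} \curlyeqprec \mathcal{S}_k$; applying each shift $\sigma^v$ and joining over $v \in E_n$ yields $\mathcal{C}_n \curlyeqprec (\mathcal{S}_k)_n = \mathcal{S}_{n+k}$, whence $h(\mathcal{C}_n,F) \le h(\mathcal{S}_{n+k},F)$, since a coarser cover has smaller topological entropy. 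Consequently $\overline{ER(\mathcal{C},F)} \le \limsup_n \frac{1}{n} h(\mathcal{S}_{n+k},F)$.

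The one point that needs a little care is the reindexing of this last limit superior. Setting $m = n+k$, one has $\frac{1}{n} h(\mathcal{S}_{n+k},F) = \frac{m}{m-k} \cdot \frac{1}{m} h(\mathcal{S}_m,F)$, and the factor $\frac{m}{m-k}$ tends to $1$. Because the sequence $\frac{1}{m} h(\mathcal{S}_m,F)$ is bounded above by $8r\log(\#A)$ (the per-term estimate obtained inside the proof of Proposition \ref{ertdef1}, via $h(\mathcal{S}_m,F)=h(\mathcal{S}'_m,F)$ and subadditivity over the at most $8rm$ cells of $E'_m$), a factor tending to $1$ does not disturb the limit superior, so $\limsup_n \frac{1}{n} h(\mathcal{S}_{n+k},F) = \limsup_m \frac{1}{m} h(\mathcal{S}_m,F) = \overline{ER(\mathcal{S}_0,F)} = ER(\mathcal{S}_0,F)$. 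This gives $\overline{ER(\mathcal{C},F)} \le ER(\mathcal{S}_0,F)$ for every $\mathcal{C}$, hence $\sup_{\mathcal{C}} \overline{ER(\mathcal{C},F)} \le ER(\mathcal{S}_0,F)$, and the full chain of equalities follows. I expect this reindexing — and in particular securing the uniform per-term bound that licenses it — to be the only genuine obstacle; the rest is a faithful transcription of the partition-case reasoning (Propositions \ref{main} and \ref{idented}) into the vocabulary of open covers.
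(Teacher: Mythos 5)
Your proposal is correct and follows essentially the same route as the paper: the lower bound comes from the observation $\overline{ER(\mathcal{C},F)} \ge ER(\mathcal{C},F)$ together with Proposition \ref{idented}, and the upper bound comes from refining $\mathcal{C}$ by some $\mathcal{S}_k$ via the Lebesgue Covering Lemma, monotonicity of $h(\cdot,F)$ under coarsening, and the identification $\limsup_n \frac{1}{n}h(\mathcal{S}_{n+k},F) = ER(\mathcal{S}_0,F)$ through Proposition \ref{ertdef1}. Your only addition is to spell out the reindexing of the limit superior (the factor $\frac{m}{m-k}\to 1$), a step the paper performs implicitly, exactly as in Lemma \ref{squareent}(ii).
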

\begin{proof}
 Following the arguments of the proof of  Lemma \ref{edmaj}  we can assert that for any open covers $\mathcal{V}\curlyeqsucc\mathcal{U}$ one has 
$h(\mathcal{U}_n,F)\le h(\mathcal{V}_n,F)$. Since for any cover $\mathcal{C}\in \mathbf{R}(A^{\Z^2})$ there exists a positive  integer $k$ such that 
$\mathcal{C} \curlyeqprec \mathcal{\mathcal{S}}_k$ it follows that 
$\limsup_{n \to \infty} \frac{1}{n} h(\mathcal{C}_n,F)$ $\le \limsup_{n \to \infty} \frac{1}{n} h(\mathcal{S}_{n+k},F)
=  \limsup_{n \to \infty} \frac{1}{n} h(\mathcal{S}'_{n+k},F)$ $=ER(\mathcal{S}_0,F)$ $= ER(A^{\Z^2},F)$ from Propositions \ref{ertdef1} and \ref{idented}.
\end{proof} 


\begin{ques}
Is it possible to obtain similar results of Proposition \ref{secondtype} 
 for some class of non trivial measure in the measurable case? 
\end{ques}

Recall that  a sliding block is a continuous map from $A^{\Z^2}\to B^{\Z^2}$  that commute with all shifts.




\begin{pro}\label{invart}
Let $(A^{\Z^2},F)$ and $(B^{\Z^2},G)$ be two cellular automata. If there exists a sliding block code   
$\varphi: A^{\Z^2}\to B^{\Z^2}$  
such that $\varphi\circ F=G\circ\varphi$ then 
$$
ER(A^{\Z^2},F)\ge ER(B^{\Z^2},G).
$$
 More particularly topological entropy rate
is an invariant for the class of bijective sliding block codes $\varphi :A^{\Z^2}\to B^{\Z^2}$ .
\end{pro}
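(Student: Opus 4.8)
The plan is to imitate the proof of Proposition \ref{invar1}, replacing partitions by open covers and measure-theoretic entropy by topological entropy, and to reduce everything to the distinguished cover $\mathcal{S}_0(B^{\Z^2})$ via Proposition \ref{idented}. First I would record that since $\varphi$ is continuous, the pullback $\varphi^{-1}(\mathcal{S}_0(B^{\Z^2}))$ of the clopen cover $\mathcal{S}_0(B^{\Z^2})$ is a finite open cover of $A^{\Z^2}$, hence a legitimate member of $\mathbf{R}(A^{\Z^2})$; consequently $ER(A^{\Z^2},F)\ge ER(\varphi^{-1}(\mathcal{S}_0(B^{\Z^2})),F)$. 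It then suffices to prove the single equality $ER(\varphi^{-1}(\mathcal{S}_0(B^{\Z^2})),F)=ER(B^{\Z^2},G)$.

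To obtain this equality I would mirror the three properties of $\varphi$ used in the proof of Proposition \ref{invar1}. The shift-commutation of $\varphi$ gives $\varphi^{-1}\sigma_B^{v}=\sigma_A^{v}\varphi^{-1}$ for every $v\in\Z^2$, whence $\varphi^{-1}(\mathcal{S}'_n(B^{\Z^2}))=(\varphi^{-1}(\mathcal{S}_0(B^{\Z^2})))'_n$, exactly as the point labelled $(3)$ there. The relation $\varphi\circ F=G\circ\varphi$ yields $\varphi^{-1}\circ G^{-i}=F^{-i}\circ\varphi^{-1}$, so that for any open cover $\mathcal{U}$ of $B^{\Z^2}$ one has $\bigvee_{i=0}^{m-1}F^{-i}(\varphi^{-1}\mathcal{U})=\varphi^{-1}\!\left(\bigvee_{i=0}^{m-1}G^{-i}\mathcal{U}\right)$. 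The remaining ingredient, the topological analogue of point $(2)$, is the entropy identity $h(\varphi^{-1}(\mathcal{U}),F)=h(\mathcal{U},G)$ for every $\mathcal{U}\in\mathbf{R}(B^{\Z^2})$.

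The heart of the argument, and the step I expect to be the main obstacle, is this last identity, since it is where the hypotheses on $\varphi$ must be used most carefully. It reduces to the claim that the minimal subcover cardinalities satisfy $N(\varphi^{-1}\mathcal{V})=N(\mathcal{V})$ for every finite open cover $\mathcal{V}$ of $B^{\Z^2}$: the inequality $N(\varphi^{-1}\mathcal{V})\le N(\mathcal{V})$ is immediate from continuity, as pulling back a subcover of $\mathcal{V}$ produces a subcover of $\varphi^{-1}\mathcal{V}$, whereas the reverse inequality $N(\varphi^{-1}\mathcal{V})\ge N(\mathcal{V})$ requires $\varphi$ to be onto, because one recovers a subcover of $\mathcal{V}$ from a subcover of $\varphi^{-1}\mathcal{V}$ only by surjectivity of $\varphi$. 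That surjectivity is genuinely needed is visible from a constant $\varphi$ collapsing onto a shift- and $G$-fixed point, which would otherwise contradict the inequality; it holds automatically in the bijective case that is the main application, and is used here in the same spirit as the onto assumption in Proposition \ref{invar-ERS0}. Granting $N(\varphi^{-1}\mathcal{V})=N(\mathcal{V})$, the definition of $h(\cdot,\cdot)$ together with the two commutation identities above gives $h(\varphi^{-1}(\mathcal{U}),F)=h(\mathcal{U},G)$ directly.

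Finally I would assemble the pieces. Applying the cover identity to $\mathcal{U}=\mathcal{S}'_n(B^{\Z^2})$ and inserting point $(3)$ gives $h((\varphi^{-1}(\mathcal{S}_0(B^{\Z^2})))'_n,F)=h(\mathcal{S}'_n(B^{\Z^2}),G)$ for every $n\ge r$; dividing by $n$ and taking the limit superior yields $ER(\varphi^{-1}(\mathcal{S}_0(B^{\Z^2})),F)=ER(\mathcal{S}_0(B^{\Z^2}),G)$, which equals $ER(B^{\Z^2},G)$ by Proposition \ref{idented}. Combined with the first paragraph this proves $ER(A^{\Z^2},F)\ge ER(B^{\Z^2},G)$. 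For the bijective case, the continuous bijection $\varphi$ between the compact spaces $A^{\Z^2}$ and $B^{\Z^2}$ has continuous inverse, and $\varphi^{-1}$ again commutes with the shifts and satisfies $\varphi^{-1}\circ G=F\circ\varphi^{-1}$; thus $\varphi^{-1}$ is itself a surjective sliding block code to which the inequality applies in the opposite direction, giving $ER(B^{\Z^2},G)\ge ER(A^{\Z^2},F)$ and hence equality.
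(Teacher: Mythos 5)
Your proposal is correct and follows essentially the same route as the paper: pull back $\mathcal{S}_0(B^{\Z^2})$, use shift-commutation to identify $(\varphi^{-1}\mathcal{S}_0)'_n$ with $\varphi^{-1}(\mathcal{S}'_n)$, transfer the cover entropies via the intertwining relation, and conclude with Proposition \ref{idented}. The one point where you go beyond the paper is in isolating the identity $N(\varphi^{-1}\mathcal{V})=N(\mathcal{V})$ and observing that its nontrivial half needs $\varphi$ onto --- a hypothesis the paper's proof uses silently when asserting $h(\varphi^{-1}(\mathcal{C}_n),F)=h(\mathcal{C}_n,G)$, and which (as your constant-map example shows) cannot be dropped from the statement; this is a correct and worthwhile refinement rather than a gap on your side.
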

\begin{proof}

 Since $\varphi\circ F=G\circ\varphi$ for all $n\in\N$ and open cover $\mathcal{C}$ we have 
$$
h(\varphi^{-1}(\mathcal{C}_n(B^{\Z^2})),F)=h(\mathcal{C}_n(B^{\Z^2}),G).
$$
Moreover since $\varphi$ commute with the group of shift 
one has $\left[\varphi^{-1}(\mathcal{S}_0)\right]_n=\varphi^{-1}[(\mathcal{S}_0)_n]$. Using   Proposition \ref{ertdef1} we get
$$
ER(A^{\Z^2},F)\ge ER\left(\varphi^{-1}\left(\mathcal{S}_0(B^{\Z^2})\right) ,F\right)=
\limsup_{n\to\infty}\frac{h\left(\left[\varphi^{-1} (\mathcal{S}_0(B^{\Z^2})\right]_n,F\right)}{n}
$$

$$
=\limsup_{n\to\infty}\frac{h\left(\varphi^{-1}\left[ (\mathcal{S}_0(B^{\Z^2})_n\right] ,F\right)}{n}
=\limsup_{n\to\infty}\frac{h(\mathcal{S}_0(B^{\Z^2})_n ,G) }{n} 
=ER(\mathcal{S}_0(B^{\Z^2}),G).
$$
 Using Proposition \ref{idented} which states that $ER(B^{\Z^2},G)=ER(\mathcal{S}_0(B^{\Z^2}),G)$ 
 we can conclude.
  \end{proof}

The first part of the following results  shows that entropy rate exhibit similar properties than entropy 
and the second gives more meaning to the definition of the entropy rate and to the  property $ER(A^{\Z^2},F)=0$.
\begin{pro}\label{eq-mesure}
$\mbox{  }$\\
(i)
For all cellular automaton $F$ on $A^{\Z^2}$ and positive integer $k$ we have 
$$ER(A^{\Z^2},F^k)=k\cdot ER(A^{\Z^2},F).$$
(ii) For all two-dimensional cellular automata one has:
$$
\limsup_{n\to\infty}\frac{h (\mathcal{S}_n, F)}{n}\le 8\times\liminf_{n\to\infty}\frac{h (\mathcal{S}_n, F)}{n}.
$$
\end{pro}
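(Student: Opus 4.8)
The plan is to transpose almost verbatim the proofs of the two measurable statements that these claims parallel: Proposition~\ref{puissances} for part~(i) and Proposition~\ref{compare-lim} for part~(ii). Each measurable ingredient has an exact topological counterpart: sub-additivity of $H_\mu$ is replaced by sub-additivity of $H$ (inequality~(\ref{entroptop})), monotonicity of $h_\mu(\cdot,T)$ under refinement by the monotonicity of $h(\cdot,T)$, and---the one substitution worth recording---the shift-invariance of the measure is replaced by the fact that every shift $\sigma^v$ is a homeomorphism of $A^{\Z^2}$ commuting with $F$. Indeed, since $\sigma^v\circ F=F\circ\sigma^v$ one has $\vee_{i=0}^{N-1}F^{-i}(\sigma^v\CC)=\sigma^v\left(\vee_{i=0}^{N-1}F^{-i}\CC\right)$, and because a homeomorphism preserves the minimal cardinality of a subcover, $H(\sigma^v\mathcal{W})=H(\mathcal{W})$ for every cover $\mathcal{W}$; hence $h(\sigma^v\CC,F)=h(\CC,F)$ for every open cover $\CC$ and every $v\in\Z^2$. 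This identity plays throughout the role that $H_\mu(\sigma^v\mathcal{P})=H_\mu(\mathcal{P})$ plays in the measurable proofs.

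For part~(i) I would first invoke Proposition~\ref{idented} to reduce the claim to $ER(\mathcal{S}_0,F^k)=k\cdot ER(\mathcal{S}_0,F)$. Since $F$ has radius $r$ one has the purely combinatorial refinement $\mathcal{S}_n\curlyeqprec\vee_{i=0}^{k-1}F^{-i}(\mathcal{S}_n)\curlyeqprec\mathcal{S}_{n+kr}$; applying monotonicity of topological entropy and dividing by $n$, the two outer terms both have $\limsup$ equal to $ER(\mathcal{S}_0,F^k)$ by Proposition~\ref{ertdef1}, so $ER(\mathcal{S}_0,F^k)=\limsup_{n\to\infty}\frac{1}{n}h(\vee_{i=0}^{k-1}F^{-i}(\mathcal{S}_n),F^k)$. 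It then remains to use the standard identity $h(\vee_{i=0}^{k-1}F^{-i}(\mathcal{S}_n),F^k)=k\cdot h(\mathcal{S}_n,F)$, which follows at the level of covers from $\vee_{j=0}^{m-1}(F^k)^{-j}\vee_{i=0}^{k-1}F^{-i}(\mathcal{S}_n)=\vee_{l=0}^{km-1}F^{-l}(\mathcal{S}_n)$ together with a reindexing of the defining limit. Substituting this into the previous display yields $ER(\mathcal{S}_0,F^k)=k\cdot ER(\mathcal{S}_0,F)$.

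For part~(ii) I would follow the geometric argument of Proposition~\ref{compare-lim} unchanged. Choosing increasing integer sequences $(u_n)$ and $(v_n)$ realizing the $\liminf$ and $\limsup$ of $h(\mathcal{S}_n,F)/n$, fixing $p\ge r$ and $m$ with $v_m\ge u_p$, and setting $n=\lfloor v_m/u_p\rfloor$, the band $(\mathcal{S}_{u_p})'_{n+1}$ decomposes, exactly as there, into four strips each a join of at most $2n+1$ translates $\sigma^w(\mathcal{S}_{u_p})$. Sub-additivity~(\ref{entroptop}) together with the shift-invariance $h(\sigma^w(\mathcal{S}_{u_p}),F)=h(\mathcal{S}_{u_p},F)$ established above gives $h((\mathcal{S}_{u_p})'_{n+1},F)\le 8n\cdot h(\mathcal{S}_{u_p},F)$. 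Since $\mathcal{S}'_{v_m}\curlyeqprec(\mathcal{S}_{u_p})'_{n+1}$, monotonicity of $h$ and the equality $h(\mathcal{S}'_n,F)=h(\mathcal{S}_n,F)$ from Proposition~\ref{ertdef1} let me chain the inequalities as in the measurable case; letting $m\to\infty$ (so that $n/(n+1)\to1$ and $v_m/((n+1)u_p)\to1$) gives $\limsup_{n\to\infty}h(\mathcal{S}_n,F)/n\le 8\,h(\mathcal{S}_{u_p},F)/u_p$ for every $p\ge r$, and letting $p\to\infty$ yields $\limsup\le 8\liminf$.

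Because the combinatorial decomposition of the square band and all the entropy manipulations are literally the same as in the measurable proofs, I expect no real obstacle beyond carefully recording the one genuinely different input, namely the shift-invariance $h(\sigma^v\CC,F)=h(\CC,F)$ of the topological entropy of covers; this is the place where the absence of any invariant measure in the topological statement is compensated by the fact that the shifts are commuting homeomorphisms. The only point requiring mild attention is that this invariance must hold for arbitrary covers $\CC$, which is precisely what allows part~(ii) to be stated with no hypothesis on a measure at all.
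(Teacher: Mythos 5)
Your proposal is correct and follows essentially the same route as the paper, which for part (i) simply refers to the proof of Proposition~\ref{puissances} and for part (ii) notes the identity $h(\sigma^v(\mathcal{C}),F)=h(\mathcal{C},F)$ (from $F$ commuting with the shifts) and then repeats the argument of Proposition~\ref{compare-lim}. You correctly isolate that identity as the one genuinely new ingredient replacing shift-invariance of the measure, and in fact spell out the details more fully than the paper does.
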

\begin{proof}
(i) Similar to the proof of Proposition \ref{puissances}.
\end{proof}
\begin{proof}
(ii) Since by definition $F$  commute with the group of shift 
$h(\sigma^v(\mathcal{C}),F)=h(\mathcal{C},F)$ for any open cover $\mathcal{C}$.
Using this equality we can follows the same proof than for the measurable case for 
shift invariant measure 
(see Proposition \ref{compare-lim}).
\end{proof}

 \subsection{Relation between topological and measurable entropy rate}

\begin{pro}\label{vp}
Let $F$ be a cellular automaton from $A^{\Z^2}\to A^{\Z^2}$. Then  
$$
ER(A^{\Z^2},F)\ge \sup_{\mu\in M (F,\sigma  )}\{ER_\mu (A^{\Z^2},F)\} \mbox{ and  }
ER(A^{\Z^2},F)\ge \sup_{\mu\in M (F)}\{ER_\mu (\mathcal{S}_0,F)\}
$$
where $M(F)$ is the set of $F$-invariant measures and $M(F,\sigma )$ the set of all bi-invariant measures.
\end{pro}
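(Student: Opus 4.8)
The plan is to reduce both inequalities to a single atomwise comparison between measure-theoretic and topological entropy of the fixed clopen partitions $\mathcal{S}'_n$, and then to invoke the earlier identifications of entropy rate with $ER(\mathcal{S}_0,\cdot)$. The underlying principle is the "easy half" of the Variational Principle, carried out one partition at a time rather than globally.

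First I would establish the elementary fact that for any finite clopen partition $\mathcal{P}$ of $A^{\Z^2}$, which is simultaneously a finite measurable partition and a finite open cover, and for any $F$-invariant measure $\mu$, one has $h_\mu(\mathcal{P},F)\le h(\mathcal{P},F)$. This holds because the atoms of the refinement $\bigvee_{i=0}^{N-1}F^{-i}(\mathcal{P})$ are pairwise disjoint, so viewed as an open cover its minimal finite subcover consists of exactly its nonempty atoms; hence $N\left(\bigvee_{i=0}^{N-1}F^{-i}(\mathcal{P})\right)$ equals the number of nonempty atoms, while $H_\mu\left(\bigvee_{i=0}^{N-1}F^{-i}(\mathcal{P})\right)$ is bounded above by the logarithm of that same number, by the maximum-entropy property of the uniform distribution. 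Dividing by $N$ and letting $N\to\infty$ gives the claim. Applying it to each $\mathcal{S}'_n$, which is clopen, yields $h_\mu(\mathcal{S}'_n,F)\le h(\mathcal{S}'_n,F)$ for every $n\ge r$.

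For the second inequality I would then take an arbitrary $\mu\in M(F)$, divide the preceding estimate by $n$, and pass to the limit superior:
$$ER_\mu(\mathcal{S}_0,F)=\limsup_{n\to\infty}\frac{h_\mu(\mathcal{S}'_n,F)}{n}\le \limsup_{n\to\infty}\frac{h(\mathcal{S}'_n,F)}{n}=ER(\mathcal{S}_0,F)=ER(A^{\Z^2},F),$$
the final equality being Proposition \ref{idented}. Taking the supremum over $M(F)$ gives the second stated inequality. For the first inequality I would restrict to bi-invariant $\mu\in M(F,\sigma)$, so that Proposition \ref{main} applies and gives $ER_\mu(A^{\Z^2},F)=ER_\mu(\mathcal{S}_0,F)$; the same chain of inequalities then shows $ER_\mu(A^{\Z^2},F)\le ER(A^{\Z^2},F)$, and taking the supremum over $M(F,\sigma)$ finishes the argument.

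There is no serious obstacle here. The only point that requires a little care is the observation that a clopen partition, regarded as an open cover, has its minimal subcover equal to its set of nonempty atoms, which is exactly what forces $H_\mu\le H$ atomwise before any dynamics is applied; once this is in hand, the limsup manipulation and the invocations of Propositions \ref{main} and \ref{idented} are routine.
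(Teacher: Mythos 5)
Your proposal is correct and follows essentially the same route as the paper: the key step in both is that a clopen partition, viewed as an open cover, has minimal subcover equal to its set of nonempty atoms, so $H_\mu \le \log N$ atomwise and hence $h_\mu(\mathcal{S}_n,F)\le h(\mathcal{S}_n,F)$, after which one passes to the $\limsup$ and invokes Propositions \ref{main} and \ref{idented}. The only cosmetic difference is that you work with $\mathcal{S}'_n$ where the paper works with $\mathcal{S}_n$; these give the same entropy rate by Lemma \ref{squareent}.
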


\begin{proof}
Since each set $S_n$ ($n\in\N$) is a partition and also an open cover, 
 the lowest cardinality of any finite subcover of $S_n$ is equal to the cardinality of the 
finite partition $S_n$ ($N(S_n)=\# (S_n)$).

Since for all finite partition $\alpha$ and measure $\mu$ one has $H_\mu (\alpha )\le \log (\# \alpha )$  (see \cite{Wa})   we can assert that 
  for all integer  
$p\ge 0$ for all $F$-invariant measure one has     
$$
H_\mu (\vee_{i=0}^{p-1} F^{-i}\mathcal{S}_n)\le \log \left( N(\vee_{i=0}^{p-1} F^{-i}\mathcal{S}_n)\right)
$$
which implies that for all $n\in\N$ we have  $h_\mu (\mathcal{S}_n,F)\le h(\mathcal{S}_n,F)$ and 
allow us to  state the following inequality 
$$
ER_\mu (S_0,F)\le \limsup\frac{h(\mathcal{S}_n,F)}{n}=ER(\mathcal{S}_0,F) 
$$
that 
prove the second statement of this Proposition.
From Theorem \ref{main} and Proposition \ref{idented} one has $ER_\mu (A^{\Z^2},F)=ER_\mu (\mathcal{S}_0,F)$ and $ER(\mathcal{S}_0,F)= ER(A^{\Z^2},F)$ which allows to conclude.
\end{proof}


It seems not clear if in general  there exists some variational principle 
between the topological entropy rate $ER(A^{\Z^2},F)=ER(\mathcal{S}_0,F)$ and $ER_\mu (\mathcal{S}_0,F)$ (not $ER_\mu (A^{\Z^2}$
 $ ,F)$) 
because in order to show that $ER_\mu (\mathcal{S}_0,F)\ge ER(\mathcal{S}_0,F)$ we can note use classical 
arguments  of the standard variational principle's proof. For instance using some arguments  of standard proof of the variational principle 
(see \cite{Wa})  we can 
show that  given any open cover $\beta$  there exist  a   finite partition $\xi$ and measure $\mu$ 
such that  $h_\mu (\xi, F)\ge h(\beta ,F )$. This can not implies that $ER(\mathcal{S}_0,F)\ge ER_\mu (\mathcal{S}_0,F)$.

 
We believe that the quantity $ER(A^{\Z^2},F)-\sup_{\mu\in M(F)}\{ER_\mu (\mathcal{S}_0, F)\}$ represents some  rate 
of none scale invariance dynamic for the multi-dimensional cellular automaton. Note that this value is equal to zero for permutative CA (see Proposition \ref{max-ER}).

\medskip 
\begin{de}
If an $F$-invariant measure $\mu$ verifies $ ER(A^{\Z^2},F)= ER_\mu(A^{\Z^2},F)$ we say 
that $\mu$ is a maximum entropy rate measure.
\end{de}
\begin{pro}\label{max-ER}
The  uniform measure on $A^{\Z^2}$ is a measure of maximum rate entropy 
  for all  bi-dimensional cellular automata $F$  permutative at the points $(0,r)$, $(0,-r)$. $(-r,0)$ and $(r,0)$.
\end{pro}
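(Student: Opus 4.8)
The plan is to squeeze the topological entropy rate $ER(A^{\Z^2},F)$ between two bounds that both equal $8r\log(\#A)$, and then observe that $ER_{\mu_\lambda}(A^{\Z^2},F)$ already attains this common value, which is exactly what the definition of a maximum entropy rate measure asks for.

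First I would record that the uniform measure $\mu_\lambda$ is bi-invariant. It is trivially invariant under the group of shifts, and it is $F$-invariant because $F$, being permutative at (say) $(r,0)$, is surjective; hence by Winston's result \cite{Wi} (as already recalled in the proof of Proposition \ref{permu1}) one has $\mu_\lambda\circ F = \mu_\lambda$. Consequently $\mu_\lambda\in M(F,\sigma)$, which is precisely the membership needed to feed $\mu_\lambda$ into Proposition \ref{vp}.

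Next I would assemble the two inequalities. Since $F$ is permutative at all four points of $Pt$, Proposition \ref{permu1} gives $ER_{\mu_\lambda}(A^{\Z^2},F)=8r\log(\#A)$. On the topological side, Proposition \ref{ertdef1} yields the universal upper bound $ER(A^{\Z^2},F)=ER(\mathcal{S}_0,F)\le 8r\log(\#A)$. Applying Proposition \ref{vp} to the bi-invariant measure $\mu_\lambda$ produces the reverse inequality $ER(A^{\Z^2},F)\ge ER_{\mu_\lambda}(A^{\Z^2},F)=8r\log(\#A)$. Combining these forces $ER(A^{\Z^2},F)=8r\log(\#A)=ER_{\mu_\lambda}(A^{\Z^2},F)$, so $\mu_\lambda$ is a maximum entropy rate measure.

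Since each step is a direct citation of a previously established proposition, there is no genuine analytic obstacle: the whole argument is a two-sided squeeze in which the topological entropy rate can be no larger than the coarse bound $8r\log(\#A)$ and no smaller than the entropy rate of any bi-invariant measure, and $\mu_\lambda$ realizes that bound. The only point demanding care is making sure the hypotheses line up, namely that $\mu_\lambda$ is genuinely bi-invariant so that Proposition \ref{vp} applies, and that permutativity at all four sites of $Pt$ is exactly the hypothesis under which Proposition \ref{permu1} delivers the value $8r\log(\#A)$.
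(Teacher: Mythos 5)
Your proof is correct and follows essentially the same route as the paper, which simply declares the result ``straightforward from Propositions \ref{permu1}, \ref{ertdef1} and \ref{idented}'': the upper bound $ER(A^{\Z^2},F)=ER(\mathcal{S}_0,F)\le 8r\log(\#A)$ squeezed against the value $ER_{\mu_\lambda}(A^{\Z^2},F)=8r\log(\#A)$. You are in fact slightly more careful than the paper in explicitly invoking Proposition \ref{vp} (and the bi-invariance of $\mu_\lambda$ needed to apply it) for the lower bound $ER(A^{\Z^2},F)\ge ER_{\mu_\lambda}(A^{\Z^2},F)$, a step the paper leaves implicit.
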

\begin{proof}
The proof is straightforward from Proposition  \ref{permu1},  \ref{ertdef1} and \ref{idented}. 
\end{proof}
Note that the uniform measure  is a measure of maximum entropy for one dimensional  permutative CA.

\begin{ques}
In \cite{Mey} Meyerovitch shows that there exist bi-dimensional CA with finite and positive entropy.
We wonder if there exists some  bi-dimensional CA such that $h(F)=\infty$ and $ER(F)=0$.
\end{ques}
 
\subsection{Entropy rate and CA'extensions }

In the following we give another rather basic argument that underline that the notion of entropy rate 
 could be better than entropy to  quantify  the complexity  of  multidimensional CA.  Recall that in dimension one 
the entropy rate is equal to the entropy up to a multiplicative constant.

In Subsection \ref{defCA} we remind the reader that for any cellular automaton $F$ there exists a unique associated block map that defines it completely.  
In the following we show that when a  CA acts on a two-dimensional space but its block map can be reduced to a 
one-dimensional one, its entropy rate is equal (up to some multiplicative constant) to the  entropy of the corresponding one-dimensional CA.

\begin{de}
If $F$ is a one-dimensional CA with corresponding block map $f:A^{2r+1}\to A$; the extension of $F$ to dimension 2 is the two-dimensional CA $\overline{F}$ defined by the local map 
$\overline{f}:A^{(2r+1)^2}\to A$ such that for 
any pattern $P$ on $E_r$ one has $\overline{f}(P)=f(p)$, where $p = P_{(0,-r)}P_{(0,-r+1)}\ldots P_{(0,r)}$.
\end{de}
In other words the local map $\overline{f}$, instead of reading all the coordinates in the square $E_r$, only reads those of the form $(0,i),\ -r\le i\le r$, and its action is that of $f$ on those coordinates.

\begin{pro}\label{plongement}
If $F$ is a one  dimensional CA and $\overline{F}$ is its extension to dimension 2 one has 
$$
 ER(A^{\Z^2},\overline{F})=2\cdot h(A^{\Z},F). 
$$
\end{pro}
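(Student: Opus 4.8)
The plan is to exploit the fact that $\overline{F}$ acts separately on each vertical column of the lattice. Writing a point of $\Z^2$ as $(k,l)$, the definition of $\overline{f}$ shows that $\overline{F}(x)_{(k,l)}$ depends only on $x_{(k,l-r)},\ldots,x_{(k,l+r)}$, i.e. on the single column $\{k\}\times\Z$ at second coordinates between $l-r$ and $l+r$. Thus $\overline{F}$ never mixes distinct columns, and under the identification $A^{\Z^2}\cong\prod_{k\in\Z}A^{\{k\}\times\Z}$ it is the infinite product of copies of the one-dimensional CA $F$, one on each column. First I would record this observation precisely and note that each factor, viewed as a dynamical system, is literally $(A^{\Z},F)$.

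Next I would compute $h(\mathcal{S}_n,\overline{F})$ exactly. Let $\alpha_0$ be the one-dimensional partition of $A^{\Z}$ by the central coordinate and $\alpha_n=\bigvee_{j=-n}^n\sigma^{-j}\alpha_0$, as in Proposition \ref{major1d}; for $-n\le k\le n$ let $\beta^{(k)}_n$ denote the partition of $A^{\Z^2}$ according to the values on the column segment $\{k\}\times[-n,n]$. The square $E_n=[-n,n]^2$ splits into its $2n+1$ columns, so $\mathcal{S}_n=\bigvee_{k=-n}^n\beta^{(k)}_n$, and each $\beta^{(k)}_n$ is a copy of $\alpha_n$ living on the $k$-th factor $A^{\Z}$. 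Because $\overline{F}$ preserves each column, $\bigvee_{i=0}^{N-1}\overline{F}^{-i}\beta^{(k)}_n$ involves only coordinates in column $k$, and its cells are in bijection with those of $\bigvee_{i=0}^{N-1}F^{-i}\alpha_n$ in $A^{\Z}$. The $2n+1$ refinements thus depend on pairwise disjoint sets of coordinates; on the full shift every combination of column patterns occurs, and since a finite clopen partition is its own smallest subcover, the number of cells of the total join $\bigvee_{i=0}^{N-1}\overline{F}^{-i}\mathcal{S}_n$ is the product over $k$. Hence $N(\bigvee_{i=0}^{N-1}\overline{F}^{-i}\mathcal{S}_n)=N(\bigvee_{i=0}^{N-1}F^{-i}\alpha_n)^{2n+1}$; taking logarithms, dividing by $N$ and letting $N\to\infty$ yields $h(\mathcal{S}_n,\overline{F})=(2n+1)\,h(\alpha_n,F)$.

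Finally I would assemble the pieces. By Proposition \ref{idented} and Proposition \ref{ertdef1},
$$ER(A^{\Z^2},\overline{F})=ER(\mathcal{S}_0,\overline{F})=\limsup_{n\to\infty}\frac{1}{n}h(\mathcal{S}'_n,\overline{F})=\limsup_{n\to\infty}\frac{1}{n}h(\mathcal{S}_n,\overline{F})=\limsup_{n\to\infty}\frac{2n+1}{n}\,h(\alpha_n,F).$$
Since $\alpha_n$ is increasing and generating for $\sigma$, the sequence $h(\alpha_n,F)$ increases to $h(A^{\Z},F)$ (the one-dimensional analogue of the identity $h(A^{\Z^2},F)=\lim_n h(\mathcal{S}_n,F)$ recalled in Section \ref{edens}, see \cite{Wa}), while $\frac{2n+1}{n}\to 2$; therefore the $\limsup$ is the genuine limit $2\,h(A^{\Z},F)$, which is the claim.

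I expect the only delicate point to be the multiplicativity step: justifying that the topological cover count of the join over the independent columns is exactly the product of the per-column counts. This needs the product structure of the full shift (so that no combination of column patterns is forbidden) together with the remark that a finite clopen partition is its own smallest subcover, so that $H(\cdot)=\ln N(\cdot)$ behaves additively here. Everything else is bookkeeping with the already-established identities $h(\mathcal{S}_n,\overline{F})=h(\mathcal{S}'_n,\overline{F})$ and $ER(A^{\Z^2},\overline{F})=ER(\mathcal{S}_0,\overline{F})$.
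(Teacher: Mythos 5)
Your proof is correct and follows essentially the same route as the paper's: the paper likewise splits $\mathcal{S}_n$ into the $2n+1$ parallel line partitions along which $\overline{F}$ acts, uses their independence to get $h(\mathcal{S}_n,\overline{F})=(2n+1)h(\alpha_n,F)$, and concludes via $ER(A^{\Z^2},\overline{F})=ER(\mathcal{S}_0,\overline{F})$ and $h(\alpha_n,F)\to h(A^{\Z},F)$. Your explicit justification of the multiplicativity of the cover count $N$ over the independent columns is a welcome elaboration of the step the paper compresses into the assertion $\sigma^{(0,j)}\alpha_n\perp\alpha_n$.
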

\begin{proof}
From Proposition \ref{ertdef1} one has 
$$
ER(A^{\Z^2},\overline{F})=ER(\mathcal{S}_0,\overline{F})=
\limsup_{n\to\infty}\frac{h(,C_n,\overline{F})}{n}
$$
 with $\CC_0=\mathcal{S}_0$. 
For all $n\in\N$ define  $\alpha_n=\vee_{i=-n}^{n}\sigma^{(i,0)}\mathcal{S}_0$ and note that   the open cover (which is also a partition) 
$\CC_n=\mathcal{S}_n=\vee_{j=-n}^{n}\sigma^{(0,j)}\alpha_n$.  
Using the shift commutativity of $F$   we obtain   for all $k\in\N$
$$
 H(\vee_{i=0}^{k-1} \overline{F}^{-i}\CC_n)= H(\vee_{i=0}^{k-1} \overline{F}^{-i}(\vee_{j=-n}^{n}\sigma^{(0,j)}\alpha_n))=H(\vee_{j=-n}^{n} \sigma^{(0,j)}(\vee_{i=0}^{k-1} \overline{F}^{-i}\alpha_n)).
$$

 From the definition of $\overline{F}$ we can assert that for all $i\in\N$ one has  $\overline{F}^{(-i)}(\alpha_n)=F^{(-i)} \alpha_n\curlyeqprec \alpha_{n+ri}$  where 
$r$ is the radius of the CA $F$. Since for all $j\in\Z-\{0\}$ one has $\sigma^{(0,j)}\alpha_n\perp \alpha_n$  it follows that $\forall k\in\N$ one has 
 $$
 H(\vee_{i=0}^{k-1} \overline{F}^{-i}\CC_n)
=H(\vee_{j=-n}^{n} \sigma^{(0,j)}(\vee_{i=0}^{k-1} F^{-i}\alpha_n))
 =(2n+1)H(\vee_{i=0}^{k-1}F^{-i}\alpha_n)
 $$
  which implies  that $h(\CC_n,\overline{F})=(2n+1)h(\alpha_n,F)$. 
  Since $(\alpha_n)_{n\in\N}$ is a generating sequence $\lim_{n\to\infty}h(\alpha_n,F)$ $=h(F)$ which allows us  to  conclude.
\end{proof} 

\begin{rem}
When the dimension $d>1$ we can use a more general definition (likewise that given in  Remark \ref{REM0} for the measurable case )  to extend  Proposition  
\ref{plongement} and show that : 
$$
 ER(A^{\Z^d}, \overline{F})=2^{d-1}\cdot h(A^{\Z},F) 
$$
where $\overline{F}$ is the extension in dimension $d$ of the one-dimensional CA $F$.
\end{rem}


\end{document}